\newcommand{\R}{\mathbb{R}}
\newcommand{\Z}{\mathbb{Z}}
\newcommand{\N}{\mathbb{N}}
\newcommand{\e}{\varepsilon}
\newcommand{\di}[1]{\,\mathrm{d}#1}
\newcommand{\dive}{\operatorname{div}}
\newcommand{\interior}{\operatorname{int}}
\newcommand{\ddt}{\frac{\operatorname{d}}{\operatorname{d}t}}
\newcommand{\twosc}{\stackrel{2}{\rightharpoonup}}
\newcommand{\stwosc}{\stackrel{2}{\rightarrow}}
\newcommand{\jump}[1]{\llbracket #1\rrbracket}
\newcommand{\thetaIdx}[1]{\theta^{#1}_\varepsilon}
\newcommand{\OmegaIdx}[1]{\Omega^{#1}_\varepsilon}
\newcommand{\alphafg}{\alpha}
\newcommand{\Gammafs}{\Gamma_\varepsilon}
\newcommand{\GammaD}{\Gamma_d}
\newcommand{\tildex}{\Tilde{x}}
\newcommand{\Sigmas}{\Sigma^{s}_\varepsilon}
\newcommand{\Sigmaf}{\Sigma^{f}_\varepsilon}
\newcommand{\SigmaIn}{\Sigma^{\text{in}}}
\newcommand{\SigmaOut}{\Sigma^{\text{out}}}
\newcommand{\SigmaInEps}{\Sigma^{\text{in}}_\e}
\newcommand{\SigmaOutEps}{\Sigma^{\text{out}}_\e}
\newcommand{\uEpsBC}{u_{\e, BC}}
\newcommand{\uBC}{u_{BC}}
\newcommand{\underlinehigh}[1]{\underline{\smash{#1}}}
\newcommand{\nSigma}{n_{\Sigma}}
\newcommand\restr[2]{{
  \left.\kern-\nulldelimiterspace 
  #1 
  \vphantom{\big|} 
  \right|_{#2} 
  }}
\newcommand{\myitem}[1]{%
\item[#1]\protected@edef\@currentlabel{#1}%
}
\newtheorem{corollary}{Corollary}
\newtheorem{lemma}{Lemma}
\crefname{lemma}{Lemma}{lemmas}
\Crefname{lemma}{Lemma}{Lemmas}
\crefname{thm}{theorem}{theorems}
\Crefname{thm}{Theorem}{Theorems}
\Crefname{algocf}{Algo.}{Algorithm}
\theoremstyle{thmstyleone}%
\newtheorem{theorem}{Theorem}
\newtheorem{proposition}[theorem]{Proposition}%
\theoremstyle{thmstyletwo}%
\newtheorem{remark}{Remark}%
\theoremstyle{thmstylethree}%
\newtheorem{definition}{Definition}%
\begin{document}
\title[Fluid-Heat System in a Thin, Rough Layer]{Analysis and Simulation of a Fluid-Heat System in a Thin, Rough Layer in Contact With a Solid Bulk Domain}


\author*[1]{\fnm{Tom} \sur{Freudenberg}}\email{tomfre@uni-bremen.de}

\author[2,3]{\fnm{Michael} \sur{Eden}}\email{Michael.Eden@mathematik.uni-regensburg.de}

\affil*[1]{\orgdiv{Center for Industrial Mathematics}, \orgname{University of Bremen}, \country{Germany}}

\affil[2]{\orgdiv{Faculty of Mathematics}, \orgname{University Regensburg}, \country{Germany}}

\affil[3]{\orgdiv{Mathematics Department}, \orgname{Karlstad University},  \country{Sweden}}


\abstract{We investigate the effective coupling between heat and fluid dynamics within a thin fluid layer in contact with a solid structure via a rough surface.
Moreover, the opposing vertical surfaces of the thin layer are in relative motion.
This setup is motivated by grinding processes, where cooling lubricants interact with the rough surface of a rotating grinding wheel.
The resulting model is nonlinearly coupled through $(i)$ temperature-dependent viscosity and $(ii)$ convective heat transport.
The underlying geometry is highly heterogeneous due to the thin rough surface characterized by a small parameter \(\epsilon > 0\) that represents both the height of the layer and the periodicity of the roughness.

We analyze this nonlinear system for existence, uniqueness, and energy estimates and study the limit behavior $\e \to 0$ within the framework of two-scale convergence in thin domains.
In this limit, we derive an effective interface model in 3D (a line in 2D) for the heat-fluid interactions inside the fluid.
We implement the system numerically and validate the limit problem through a direct comparison with the $\e$-model.
Furthermore, we investigate the influence of the temperature-dependent viscosity and various geometrical configurations with simulation experiments. 
The corresponding numerical code is freely available on GitHub.}

\keywords{Homogenization, mathematical modeling, dimension reduction, two-scale convergence, thin fluid films, numerical simulations}

\maketitle

\section{Introduction}
This research is motivated by grinding processes, which typically involve three main components: the grinding wheel, the workpiece, and a cooling lubricant.
Here, the workpiece is the material that is processed or shaped through grinding and the cooling lubricant is a fluid that is used to reduce friction and remove heat during the grinding process \cite{klocke2009}.
Given the complex granular surface of the grinding wheel, thin gaps and channels emerge in the contact area of the wheel and the workpiece, allowing the coolant to flow through.
Understanding the formation and the influence of these microstructures is crucial in optimizing the usage of coolants \cite{TONSHOFF1992, Wiederkehr2023}.
Naturally, resolving the granular wheel structure and the potentially rough workpiece surface is numerically not feasible, creating a need for homogenized models that account for microscopic effects in an effective way.
Macroscopic models describing the heat dynamics in the grinding gap via an interface temperature field can be found in the literature; see \cite{GU2004} and the references therein.
These models are heuristically derived and the microstructures and fluid flow in the gap are not accounted for.
The fluid dynamics in the gap is strongly influenced by the viscosity of the coolant, which in reality is temperature dependent \cite{Garca-Coln1989, Perez_2008}; and given the large temperature variations during the grinding process, this effect should not be ignored as it can influence the quality of the results \cite{Paiva2021,Wiesener2023}. Numerical simulations to predict the fluid pressure in the contact zone are usually done with the help of the Reynolds equation \cite{Thunich2023, VESALI2014}, but often without taking into account the temperature variations.

Similar setups appear in lubrication problems and bearings.
There, the aim is not to remove material, but instead to reduce the friction between two moving objects with the help of oils.
Often, temperature variations are encountered (either by friction-induced heat or external supply) that influence the viscosity of the fluid \cite{BAIR2001, HABCHI2010}.
Another related area is fractured media, where thin cracks are present in a larger bulk domain.
Solutes and/or heat can be transported through these thin cracks, see \cite{KUMAR2020} and the references therein.
We also point to \cite{KRZACZEK2023} for a numerical study on temperature-dependent viscosity on flows through fractures.
Although there are differences, one typically expects much lower velocities than in our case, the analytical results presented in this article for the temperature could still be transferred.

We point out that while our model is motivated by a grinding process, we do not try to capture all or even most of the relevant aspects of such a process; for example, we do not include solid-to-solid contact, time-dependent contact zones, or mechanical deformations. Instead, we consider a simplified framework to study some phenomena in a mathematically rigorous setting. We acknowledge that our assumptions are strong idealizations and may not directly reflect a real-world application. Many of the not included aspects are more challenging to model mathematically, and remain open from a rigorous mathematical standpoint. However, we believe that the mathematical results in this article still offer a starting point for further studies and are also relevant to other problems.

Starting with an idealized periodic setup with a small parameter $\e\ll1$ characterizing both the length scale of the periodicity as well as the average thickness of the thin layer, we rigorously derive an effective, homogenized model accounting for the nonlinear coupling between heat and fluid flow dynamics.
This is accomplished by conducting a limiting procedure $\e\to0$ within the framework of two-scale homogenization for thin domains. 

From a mathematical point of view, the problem consists of two main aspects: a large bulk domain where the rough interface acts similarly to a rough boundary, and a thin rough layer. For bulk domains with a rough boundary, homogenization procedures, including diffusion or fluid flow problems, can be found in \cite{Bassion2008, CHECHKIN1999, Mikeli2009}, or in the case of rough interfaces, we point to \cite{Donato_2019}. 
The case where the amplitude of the roughness does not scale with $\e$ has also been considered in the literature, see for example \cite{CHECHKIN1999, Donato_2019, Nevard1997} and the references therein. 
Additionally, there are some connections to dimension reductions in fractured media, where inside a bulk domain a thin layer of width $\e$ with different material properties appears (for example, a locally varying permeability in a porous medium). Both formal as well as rigorous limit procedures have been carried out for such problems and the thin layer reduces to effective interface equations, see \cite{AHMED2017, KUMAR2020, Pop2016}.

On the other hand, homogenization of diffusion problems in thin layers with periodic structures has been extensively studied before with the concept of two-scale convergence; see \cite{Gahn17, Neuss07}.
Effective fluid flow through thin layers was rigorously investigated in \cite{Bayada_1986} and later extended for a periodic roughness in \cite{Bayda_1989}. Starting with the stationary Stokes equation, they proved the validity of the Reynolds equation \cite{Reynolds1886} when the layer height is small compared to the length of the roughness period.
If the periodicity and the height are of the same order (usually called the Stokes roughness regime), a model similar to a Darcy equation is derived.
The Reynolds equation plays an important role in lubrication, which has a setup similar to our underlying scenario.
The equation has been further studied and homogenized in the literature \cite{ALMQVIST2007, BENHABOUCHA_2024, Chupin2012, Fabricius2017}; see also \cite[Section 1]{LUKKASSEN_2011} for a good overview.
For layers without roughness, the first derivation of the Reynolds equation with the concept of two-scale convergence was achieved in \cite{Marusic2000}.
Flow through a thin layer, without oscillations, coupled with a temperature-dependent viscosity was studied in \cite{Almqvist2002}, where a Reynolds equation with temperature-dependent viscosity was derived via asymptotic expansion. Furthermore, fluid flow in thin (porous) layers has been rigorously homogenized with the concept of two-scale convergence \cite{Fabricius2022, Fabricius2023} and flow through a porous interface \cite{Gahn2024}.

To be more specific, we consider a geometry incorporating a rough, thin layer, where the boundaries of the layer are additionally in relative motion to each other.
In this geometry, we are considering the coupling between a quasi-stationary Stokes system for the fluid flow and the heat dynamics.
Here, strong coupling comes into play via the convection and temperature-dependent viscosity.
We assume that the viscosity is a Lipschitz continuous function with respect to the temperature.
We show that the resulting nonlinear problem has a weak solution (\cref{thm:solution_operator_exitsence}) via a Schauder fixed point argument. A priori estimates of the temperature are obtained with energy estimates. The pressure is estimated with the help of Bogovskii operators for thin layers, which can be found in existing literature \cite{Fabricius2022, Fabricius2023}.
Uniqueness follows under additional regularity assumptions at the solution (\cref{thm:eps_problem_uniqueness}).
In particular, any strong solution is necessarily unique.

The main novel aspect of our work is the derivation of $\e$-independent $L^\infty$ bounds of the temperature field in the thin layer (\Cref{lemma:infinity_estimate}), utilizing results of \cite{Ladyvzenskaja1968}. These are used to obtain strong two-scale convergence, which is required to pass to the limit in the nonlinear terms. To this end, we also show in \cref{sec:auxiliary_results} multiple ($\e$-dependent) trace estimates and embedding estimates for both the thin layer and the bulk domain.
The limit procedure itself can then be carried out with arguments already established in prior works \cite{Bhattacharya2022, Gahn2024, Fabricius2023}, with slight modifications of the arguments to obtain a two-pressure representation and the boundary conditions. The homogenized model is stated in \cref{thm:homogenized_model}.
Similarly, as for the $\e$-problem, we are able to establish uniqueness for the homogenized model under additional regularity assumptions (\cref{thm:homogenized_model_uniqueness}).

In the end, a small convergence study $\e\to0$ is carried out numerically, investigating the order of convergence of the error between the effective model and the microscale model.
Moreover, we conduct several numerical experiments showcasing and investigating the effects of the temperature-dependent viscosity, the roughness pattern, as well as the inflow condition.
The code of our implementation is uploaded and made freely accessible on GitHub \cite{Code}.

The limit model (see \cref{thm:homogenized_model}) consists of a heat equation for the solid bulk domain coupled to heat and fluid dynamics on a lower-dimensional surface. 
The effective equation for the fluid temperature recovers the models derived for diffusion problems in thin connected layers \cite{Gahn17, Gahn2024}, but with an additional advection term induced by the effective flow field. 
The effective fluid velocity is given by a two-pressure Stokes system, which can be reduced to a Darcy-like equation with a temperature-dependent viscosity.
Ignoring the viscosity, the same effective equation as in \cite{Bayda_1989} is obtained; with fewer mathematical assumptions thanks to the use of two-scale convergence. 
The effective flow model is also similar to \cite{Fabricius2023} where pressure-driven flows in thin porous layers were analyzed. The cell problems (\ref{eq:cell_problem_stokes}) used to compute the effective permeability in the layer are the same as in classical porous media \cite[Eq. (1.11)]{Allaire1989} and also appear, with slightly modified boundary conditions, in \cite[Eq. (6.4)]{Fabricius2023}. 
In addition to the permeability, a term induced by the relative motion appears in the Darcy equation, see \Cref{eq:cell_problem_stokes_bc_movement}. A similar cell problem occurs in \cite[Eq. (20)]{Gahn2024} when computing the effective flow through a thin porous interface. 
We conclude that individual parts of the derived limit model have appeared in the literature in different related setups.
The main novelty of this work is to combine these interesting features, in particular with respect to the coupling through the temperature-dependent viscosity and the advection term.
Mathematically speaking, this introduces several difficulties. For one, the nonlinear coupling necessitates a careful fixed-point argument to ensure existence of solutions where uniqueness is generally only to be expected under higher regularity assumptions (see \cref{thm:eps_problem_uniqueness}).
In addition, uniform (w.r.t.~the scale parameter $\e$) $L^\infty$-estimates are needed for this fixed-point argument to work. Due to the domain interactions (bulk domain connected to a thin fluid layer), this calls for the careful use of $\e$-controlled embeddings (see \cref{lem:embedding_constant,lemma:infinity_estimate}).

From an application point of view, this is of particular interest whenever large temperature variations are possible as this generally implies changes in viscosity.
Returning specifically to the application of grinding, a similar model was heuristically introduced in \cite{GU2004}. There, the coolant is also modeled by a function defined on the interface between the wheel and the workpiece. We could therefore give some mathematical justification for the model used in the literature. However, the heuristic model is not completely the same; the fluid velocity is predetermined, no effective parameters depending on the microstructure appear, and the heat diffusion is set to zero along the interface.

This paper is structured as follows: In \cref{sec:setup_section}, we start by introducing the geometrical setup, the mathematical model, and the assumptions needed to pass to the limit $\e \to 0$. In \cref{sec:micro_scale_model} the underlying nonlinear model is analyzed regarding existence and uniqueness. Additionally, $\e$-dependent solution estimates are derived. \cref{sec:homogenization} demonstrates the homogenization process and in \cref{thm:homogenized_model} the effective model is presented. Finally, in \cref{sec:simulations}, we investigate and compare the effective model and the resolved micro model with the help of numerical simulations. 
\section{Problem formulation and assumptions}\label{sec:setup_section}
In this section, we provided the setup of the underlying geometry and the concrete differential equations on the micro scale.
\subsection{Description of the geometry}
We start by describing the geometry and the assumptions on the regularity of the interface. In the following, the superscript $f$ denotes that a property belongs to the fluid layer, while $s$ denotes objects belonging to the solid bulk atop the fluid layer. The considered time interval is denoted by $S=(0, T)$, for $T>0$. 

The general setup is presented in \cref{fig:domain_picture}. We consider spatial domains of dimension $d=2,3$.
The unit vectors are denoted with $e_i\in \R^d$ for $i=1,\dots, d$ and the $i$-dimensional unit cube is denoted by $Y^i = (0, 1)^i$.
The overall domain $\Omega \subset \R^d$ is assumed to be of the form $\Omega= \Sigma\times (0, H)$ where $\Sigma\subset\R^{d-1}$ is a bounded $d-1$ dimensional cuboid $\Sigma\subset \R^{d-1}$ with corner coordinates in $\Z^{d-1}$.\footnote{So $\Omega$ is also a $d$-dimensional cuboid.}
By a slight abuse of notation, we understand $\Sigma$ both as a set in $\R^{d-1}$ and the lower boundary part $\Sigma\times \{0\}\subset\partial\Omega$.
Any point $x \in \Omega$ is also denoted by $x = (\Tilde{x}, x_d) \in \Sigma\times (0, H)$, the same notation is used for $y \in Y^d$. 
\begin{figure}[ht]
    \centering
    \includegraphics[width=0.7\linewidth]{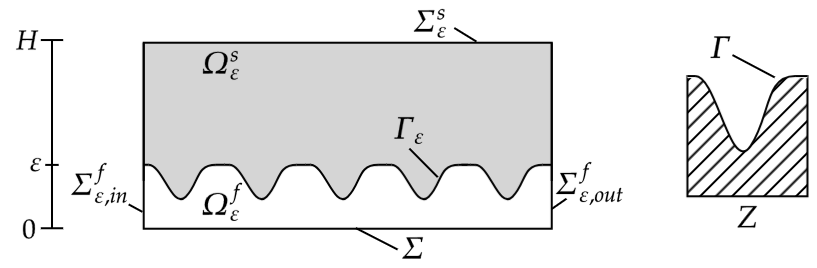}
    \caption{Schematic depiction of the geometric setup. Left: The complete domain with the different subdomains and boundaries. Note that $\Omega=\OmegaIdx{s}\cup\OmegaIdx{f}\cup\Gamma_\e$. Right: The reference cell.}
    \label{fig:domain_picture}
\end{figure}

To construct the rough thin layer, we introduce the open periodic reference cell $Z \subseteq Y^d$ for the fluid domain. We assume, for technical reasons, that 
\begin{itemize}
    \item $Z$ is Lipschitz and connected, $Y^d \setminus Z$ is either empty or also Lipschitz and connected, 
    \item there is a $\gamma_0 > 0$ such that $y_d \geq \gamma_0$ for all $y \in Y^d\setminus Z$,
    \item the domain is periodic in the lateral directions, meaning 
        \begin{equation*}
            \partial Z \cap \{y_i = 0\} + e_i = \partial Z \cap \{y_i = 1\}, \quad \text{for } i=1,\dots,d-1,
        \end{equation*}
    \item if $Y^d \setminus Z \neq \emptyset$, then $Y^d \setminus Z$ touches the top boundary:
        \begin{equation*}
            \left|\partial (Y^d \setminus Z) \cap \{y_d = 1\}\right| > 0.
        \end{equation*}
\end{itemize}
The lower bound $\gamma_0>0$ ensures that the solid does not touch the bottom boundary. The reference interface $\Gamma$ and the combined upper and lower interface of $Z$, denoted by $\Gamma_d$, are given by
\begin{linenomath*}\begin{equation*}
        \Gamma \coloneqq \interior  ( \partial Z \cap \partial(Y\setminus Z)  ),\quad\GammaD:=\Gamma\cup\{y_d=0\}.
\end{equation*}\end{linenomath*}
Since the corners of $\Sigma$ are at integer coordinates, we can tile $\Sigma$ with $\e Y^{d-1}$ for $\e\coloneqq \e_n=n^{-1}$ for any $n \in \N$.
In the following, we suppress the $n$ and note that any limit $\e\to0$ is understood as a limit $n\to\infty$. With that in mind, we define $\mathcal{K}_\e \coloneqq \{k \in \Z^{d-1} \times \{0\} : \e(Y + k) \subset \Omega\}$.
We can introduce the rough, thin fluid layer $\OmegaIdx{f}$, the rough solid domain $\OmegaIdx{s}$, and the rough interface $\Gamma_\e$ by (assuming that $\e<H$)
\begin{linenomath*}\begin{align*}
    \OmegaIdx{f} \coloneqq \interior\left(\bigcup_{k\in \mathcal{K}_\e} \e(\overline{Z}+k) \right), \quad 
    \OmegaIdx{s}\coloneqq \Omega \setminus \overline{\Omega}_\e^f, \quad 
    \text{and} \quad 
    \Gamma_\e \coloneqq \interior \left( \partial \OmegaIdx{f} \cap \partial \OmegaIdx{s} \right).
\end{align*}\end{linenomath*}
By construction, $\OmegaIdx{f,s}$ are connected, and we assume they are both also Lipschitz.
Note that $\Omega=\OmegaIdx{s}\cup\OmegaIdx{f}\cup\Gamma_\e$.
We denote the outer boundary of the solid domain by $\Sigmas = \partial\OmegaIdx{s} \setminus \Gammafs$.
Now, let $\SigmaIn,\SigmaOut\subset\partial\Sigma$ be two disjoint sets with positive surface measure such that $\SigmaIn\cup\SigmaOut=\partial\Sigma$.
We then split the boundary of $\OmegaIdx{f}$ into
\[
\partial\OmegaIdx{f}=\Sigma\cup\Gamma_\e\cup\SigmaInEps\cup\SigmaOutEps
\]
where the sets $\SigmaInEps, \SigmaOutEps$ are given by
\begin{linenomath*}\begin{equation*}
    \SigmaInEps \coloneqq (\SigmaIn \times (0, \e)) \cap \partial \OmegaIdx{f}
    \quad \text{ and }  \quad
    \SigmaOutEps \coloneqq (\SigmaOut \times (0, \e)) \cap \partial \OmegaIdx{f}.
\end{equation*}\end{linenomath*}
Finally, with $\Omega_\e = \Sigma\times (0, \e)$, we denote the thin layer without any roughness.
Note, that both $H$ and $|\Sigma|$ are independent of $\e$ and are therefore of order $\mathcal{O}(1)$.
The volume of the fluid layer $|\OmegaIdx{f}|$ and the area of the lateral boundary $|\SigmaInEps|, |\SigmaOutEps|$ are of order ${O}(\varepsilon)$.
Furthermore, for the length of the rough interface and the remaining boundary of the solid domain, it holds $|\Gammafs|, |\Sigmas| = \mathcal{O}(1)$.

With $n$, we denote the outer normal vector on the domains $\Omega, \OmegaIdx{f,s}$ and $Z$. On the common interface $\Gamma_\e$ we further write $n^{f,s}$ to denote the outer normal vector corresponding to $\OmegaIdx{f,s}$, but the superscript is suppressed whenever the direction is clear from context. Additionally, the normal vector $\nSigma$ is introduced in \Cref{eq:inflow_condition} for the lateral boundaries of the fluid domain.
 
With the subscript $\#$, we indicate a space of periodic functions in the directions $1,\dots, d-1$; in particular, we use
\begin{linenomath*}\begin{align*}
    C_{\#}(Z) &\coloneqq \left\{u\in C(\R^d)\ : u(x+e_i) = u(x) \text{ for all } x \in \R^d \text{ and } i=1,\dots, d-1 \right\},\\
    L^2_{0,\#}(Z)&\coloneqq\left\{u\in L^2_{\text{loc}}(\R^d)\ : \  u_{|Z} \in L^2_0(Z),\ u(x+e_i) = u(x) \text{ for a.a. } x \in \R^d \text{ and } i=1,\dots, d-1 \right\},\\
    H^1_{\#}(Z) &\coloneqq \left\{ 
         u \in H^1_{\text{loc}}(\R^d) : u_{|Z} \in H^1(Z), u(x+e_i) = u(x) \text{ for a.a. } x \in \R^d \text{ and } i=1,\dots, d-1
     \right\},\\
    H^1_{\#,\Gamma_d}(Z) & \coloneqq \left\{ 
         u \in H^1_{\#}(Z)\ :\ u=0\text{ for a.a. }y\in\GammaD
     \right\}.
\end{align*}\end{linenomath*}
Additionally, we define $\nabla_{\tildex} u = (\partial_{x_1} u, \dots, \partial_{x_{d-1}} u, 0)$, where a $0$ is appended such that the gradient it compatible with the usual gradient in $\R^d$. With $\| \cdot \|_{H^1(\OmegaIdx{f} \cup \OmegaIdx{s})}$ we denote the combined $H^1$ norm on both subdomains, e.g.,
\begin{equation*}
    \| \cdot \|_{H^1(\OmegaIdx{f} \cup \OmegaIdx{s})} \coloneqq \sqrt{\| \cdot \|_{H^1(\OmegaIdx{f})}^2 
    + \| \cdot \|_{H^1(\OmegaIdx{s})}^2}.
\end{equation*}

\subsection{The micro model}
For a function $\varphi \in L^2(\Omega)$, we use the notation $\varphi_\e^k\coloneqq \varphi_{|\Omega_\e^k}$ ($k=f,s$) for the restriction of $\varphi$ to the subdomain $\Omega_\e^k$.
A similar notation holds for the coefficients of the models, which are assumed to be constant in each subdomain. 
We suppress superscripts and the index $\e$ when possible to improve readability.

To describe the temperature $\theta_\e = (\thetaIdx{s}, \thetaIdx{f})$, we utilize heat equations in both domains
\begin{subequations}\label{eq:epsilon-problem}
\begin{linenomath*}\begin{alignat}{2}
    \partial_t\theta_\e - \dive\left(\kappa \nabla \theta_\e\right) &= 0 \quad \quad &&\text{ in } S\times \OmegaIdx{s}, 
    \\
    \frac{1}{\varepsilon} \partial_t\theta_\e - \frac{1}{\varepsilon} \dive\left(\kappa \nabla \theta_\e - u_\e \theta_\e \right) &= 0 \quad \quad &&\text{ in } S\times \OmegaIdx{f}, \label{eq:fluid_eps_pde}
    \\
    \theta_\e(0, \cdot) &= \theta_{\e, 0}  &&\text{ in } \Omega.
\end{alignat}\end{linenomath*}
Here, $\kappa=\kappa_f$ in $\OmegaIdx{f}$ and $\kappa=\kappa_s$ in $\OmegaIdx{s}$ are the heat conductivities in each subdomain and $u_\e$ is the fluid velocity given via a Stokes system below.
As they have no specific bearing on the mathematical analysis or the homogenization result, we have set the \textit{volumetric heat capacity} to 1 in both phases.

At the outer boundary, we apply a Dirichlet condition on $\SigmaInEps$ for a prescribed cooling temperature (for simplification set to 0, e.g., the temperature is seen relative to the inflow temperature, which is possible in this case since we always assume $\dive{u_\e} = 0$), and on the other boundary sections we apply a homogeneous Neumann condition:
\begin{linenomath*}\begin{alignat}{2}
    \theta_\e &= 0 &&\text{ on } S \times \SigmaInEps,\label{eq:cooling_fluid}
    \\
    -\kappa \nabla \theta_\e \cdot n &= 0 &&\text{ on } S \times \left( \SigmaOutEps \cup \Sigmas \cup \Sigma \right).
\end{alignat}\end{linenomath*}
The temperature profiles of the two domains are coupled with a Robin exchange condition:
\begin{linenomath*}\begin{alignat}{2}
    -\frac{1}{\varepsilon} \kappa^f \nabla \thetaIdx{f} \cdot n^f &= \alphafg \jump{\theta_\e} + f_\e^f \quad &&\text{ on } S \times \Gammafs, \label{eq:exchange_eps_fluid} \\
    -\kappa^s \nabla \thetaIdx{s} \cdot n^s &= \alphafg \jump{\theta_\e} + f_\e^s \quad &&\text{ on } S \times \Gammafs,
\end{alignat}\end{linenomath*}
where $\jump{\theta} = (\theta^f - \theta^s)$ denotes the usual jump across an interface, $\alphafg$ denotes the heat exchange parameter, and $f_\e^{f,s}$ describe heat sources.

Note, the chosen $\e$-scaling appears in similar problems \cite{Eden2024, Gahn17} and is designed to keep the contributions of the equations in the effect model for $\e \to 0$. From a physical point of view, the scaling originates from assumptions about the physical parameters (e.g., a fast heat conductivity in the thin layer compared to the height of the layer) and can be incorporated by a non-dimensionalization step; see for example \cite[Section 2.1]{KUMAR2020}, where this is explained for fluid flow through thin cracks.

Next to the temperature, we also model the fluid flow inside $\OmegaIdx{f}$. Fluid velocity $u_\e$ and pressure $p_\e$ are given as the solution of a Stokes system with temperature-dependent viscosity $\mu$,
\begin{linenomath*}\begin{alignat}{2}
    -\dive\left(\mu\left(\thetaIdx{f}\right) \nabla u_\e \right) + \nabla p_\e &= 0 \quad \quad &&\text{ in } S\times \OmegaIdx{f}, 
    \\
    \dive\left(u_\e \right) &= 0 \quad \quad &&\text{ in } S\times \OmegaIdx{f}, 
    \\
    u_\e &= \uEpsBC &&\text{ on } S \times \partial \OmegaIdx{f},
    \\
    \int_{\OmegaIdx{f}} p_\e \di{x} &= 0 &&\text{ in } S.
\end{alignat}\end{linenomath*}
Here $\uEpsBC \in H^{1/2}(\partial \OmegaIdx{f}) \cap L^\infty(\partial \OmegaIdx{f})$ describes a given boundary flow.
\end{subequations}

\subsection{Assumptions on data}\label{sec:assumption_on_data}
To ensure the existence of solutions and to pass to the limit $\e \to 0$, we need multiple assumptions on the data; these are collected in this section.
\begin{enumerate}
    \myitem{(A1)}\label{item:A1} For the coefficients we assume $\alphafg, \kappa^f, \kappa^s > 0$ and that they are of order $\mathcal{O}(1)$.
    \myitem{(A2)}\label{item:A2} The temperature dependent viscosity is given by a Lipschitz continuous function ${\mu: \R \to \R}$ that fulfills $\underlinehigh{\mu} \leq \mu(x) \leq \overline{\mu}$ for some $0 < \underlinehigh{\mu} \leq \overline{\mu} < \infty$, independent of $\e$. 
    \myitem{(A3)}\label{item:A3} The initial conditions fulfill that $\theta_{\e, 0} \in L^\infty(\Omega)$ and $C_0 \coloneqq \sup_{\e > 0} \| \theta_{\e, 0}\|_{L^\infty(\Omega)} < \infty$.
    \myitem{(A4)}\label{item:A4} For the heat sources it holds $f_\varepsilon^{f,s} \in L^\infty(S \times \Gammafs)$ and $ \sup_{\varepsilon>0} \|f_\varepsilon^{f,s}\|_{L^\infty(S\times \Gammafs)} < \infty$. 
    \myitem{(A5)}\label{item:A5} For the Dirichlet condition on the lateral fluid boundaries, we assume there is $\uBC \in H^{1/2}(\partial \Sigma \times (0, 1))^d \cap L^{\infty}(\partial \Sigma \times (0, 1))^d$ with the properties:
        \begin{itemize}
            \item $\uBC \cdot e_d = 0$,
            \item $\uBC(\tildex, 0) = u_\text{motion}$, with $u_\text{motion} \in \R^d$ and $u_\text{motion} \cdot e_d = 0$,
            \item $\uBC(\tildex, x_d) = 0$ for almost all $\tildex \in \partial \Sigma$ and $x_d \geq \gamma_0$,
            \item inflow and outflow conditions $\left( \uBC \cdot n_\Sigma \right)_{|\SigmaIn \times (0, 1)} < 0  \text{ and } \left( \uBC \cdot n_\Sigma\right)_{|\SigmaOut \times (0, 1)} \geq 0$.
        \end{itemize}
    In addition, we require \begin{linenomath*}\begin{equation}\label{eq:inflow_condition}
        \int_{\partial \Sigma \times (0, 1)} \uBC \cdot n_\Sigma \di{\sigma_x}= 0
    \end{equation}\end{linenomath*} 
    to ensure the existence of solutions. Here, $n_\Sigma$ is the normal vector that points outward from $\Sigma \times (0, 1)$. 
    %
    %
    \myitem{(A6)}\label{item:A6} For the initial conditions there are limit functions $\theta^s_0 \in L^2(\Omega)$ and $\theta^f_0 \in L^2(\Sigma)$ such that $\chi_{\OmegaIdx{s}}\theta_{\e, 0} \to \theta^s_0$ in $L^2(\OmegaIdx{s})$ and $\chi_{\OmegaIdx{f}}\theta^f_{\e, 0} \twosc \chi_{Z}\theta^f_0$ in $L^2(\Omega_\e)$, for $\e\to 0$. 
    Here, $\twosc$ denotes two-scale convergence in the thin layer which is introduced in \cref{def:two_scale}.
    \myitem{(A7)}\label{item:A7} For the heat sources there are $f^{f,s}\in L^2(S\times\Sigma\times \Gamma)$ such that $f_\varepsilon^{f,s} \twosc f^{f,s}$, for $\e \to 0$.
\end{enumerate}
The Assumptions \ref{item:A1} and \ref{item:A2} are often encountered in similar problems, especially the properties of $\mu$ are needed to ensure the existence of a solution.

The assumption \ref{item:A5} characterizes the boundary condition on the scaled boundary $\partial \Sigma \times (0, 1)$, the microscopic boundary condition is defined via
\begin{equation*}
    \uEpsBC(x) \coloneqq \begin{cases}
        \uBC(\tildex, x_d/\e) \quad &\text{for } x \in \SigmaInEps \cup \SigmaOutEps, \\
        u_\text{motion} &\text{for } x \in \Sigma, \\
        0 &\text{for } x \in \Gamma_\e.
    \end{cases}   
\end{equation*}
The inflow and outlfow conditions of $\uBC$ are transferred to $\uEpsBC$, to be precise 
\begin{equation}\label{eq:inflow_and_outflow_condition}
    \left( \uEpsBC \cdot n\right)_{|\SigmaInEps} < 0 
    \quad \text{ and } \quad 
    \left( \uEpsBC \cdot n\right)_{|\SigmaOutEps} \geq 0. 
\end{equation}
Examples of functions that fulfill the assumptions can be found in \cref{sec:simulations}.
Furthermore, by assumption \ref{item:A5}, we can construct a continuation $U \in H^1(\Sigma \times (0, 1))$ of $\uBC$, with $U(\tildex, y) = 0$ for $y \geq \gamma_0$ and $\dive{U}= 0$, \cite[Theorem. 3.5]{Girault_1981}.
By setting $U_\e(x) = (U_1, \dots, \e U_{d})(\tildex, \frac{x_d}{\e})$ the extension can also be transferred to the thin rough layer $\OmegaIdx{f}$ (this requires $\uBC \cdot e_d = 0$ such that $U_\e$ has the correct boundary condition), this construction was already derived in \cite{Bayda_1989}. 
%
\begin{remark}\label{rem:allow_contact}
    We provide some additional comments regarding our model and our mathematical assumptions:
    \begin{itemize}
        \item[$(i)$]$u_\text{motion}$ describes a prescribed velocity at the bottom boundary (for example, the movement induced by a rotating grinding wheel in our motivation). 
        It is not mandatory that $u_\text{motion}$ is constant but could also be function in $H^{1/2}(\Sigma)^d \cap L^\infty(\Sigma)^d$ (as long as $u_\text{motion} \cdot e_d = 0$). 
        To simplify the notation, we use a constant $u_\text{motion}$. Similarly the boundary condition could also be time-dependent, $\uEpsBC \in L^\infty(S; H^{1/2}(\OmegaIdx{f})^d \cap L^\infty(\OmegaIdx{f}))$, as this would not influence the following results.
        \item[$(ii)$] The specific assumptions regarding $\uBC$ and, as a consequence, $\uEpsBC$ are a bit technical, but they ensure that there is an inflow and an outflow region and that $\Sigma$ and $\Gamma_\e$ are in relative motion to each other.
        The regularity assumption ensures compatibility across the different boundary parts.
        \item[$(iii)$] The relationship between temperature and the viscosity of fluids is complex and often modeled heuristically.
        A commonly used example is the \textit{Vogel-Fulcher-Tammann} equation, given by 
        \[
        \mu(\theta)=\mu_0\exp\left(\frac{\alpha}{\theta-T_{0}}\right)
        \]
        where $\mu_0,\alpha,T_{0}$ are empirical fitting parameters \cite{Garca-Coln1989}.
        The model suggests that the relationship between viscosity and temperature tends to be smooth which means that the Lipschitz continuity is a reasonable assumption.
        However, this function $\mu$ is not bounded from above and therefore does not satisfy Assumption (A2). 
        In practice, this law is only considered valid within a specific temperature range—namely, the interval where the material remains in its liquid state.
        Therefore, it makes sense to restrict this law to such a temperature range and extend it to $\R$ by its boundary values, which also preserves Lipschitz regularity.
        Moreover, the $L^\infty$-estimates we establish for the temperature field (see \cref{lemma:infinity_estimate}) provide \textit{a posteriori} a qualitative justification for this restriction.
    \end{itemize}
\end{remark}
\section{Analysis of the microscale model}\label{sec:micro_scale_model}
We start by analyzing the model on the $\e$-scale to ensure that it is well-posed and to derive solution bounds needed for the limiting procedure.
To this end, we first list some technical lemmas needed in the following sections.

\subsection{Auxiliary results}\label{sec:auxiliary_results}
\begin{lemma}[Extension operators and transformation]\label{lem:extension_operator}
For all $\e >0$ and $p \in [1, \infty)$ there exists
\begin{itemize}
    \item[$(i)$] a linear extension operator $\mathbb{E}_\e^f: W^{1,p}(\OmegaIdx{f}) \to W^{1,p}(\Omega_\e)$ such that 
    \begin{equation*}
        \|\mathbb{E}_\e^f \psi\|_{W^{1,p}(\Omega_\e)} \leq  C \|\psi\|_{W^{1,p}(\OmegaIdx{f})}\quad \text{ for all } \psi \in   W^{1,p}(\OmegaIdx{f}).
    \end{equation*}
    Here, $C>0$ may depend on $p$ but is independent of $\e$.
    \item[$(ii)$] a linear extension operator $\mathbb{E}_\e^s: W^{1,p}(\OmegaIdx{s}) \to W^{1,p}(\Omega)$ such that 
    \begin{equation*}
        \|\mathbb{E}_\e^s \varphi\|_{W^{1,p}(\Omega)} \leq  C \|\varphi\|_{W^{1,p}(\OmegaIdx{s})}\quad \text{ for all } \varphi \in   W^{1,p}(\OmegaIdx{s}),
    \end{equation*}
    with $C>0$ depended on $p$ but independent of $\e$.
\end{itemize}
\end{lemma}
\begin{proof}
    Both extension operators can be constructed using a classical extension procedure \cite{ACERBI_1992}.
    For thin layers, this was carried out in detail and further studied in \cite[Theorem 1]{Gahn_2021}. See also \cite[Lemma A.6]{Gahn2024} for a generalization to other boundary conditions in the case $p=2$.
    The general idea is to first construct an extension operator for a single cell, in the case of the fluid from $W^{1,p}(Z)$ to $W^{1,p}(Y^d)$. Which exists since $\Gamma$ is Lipschitz. The extension is $\e$-independent (as there is no $\e$ involved yet).
    The statement is then obtained by gluing these extension operators together via a periodic partition of unity in conjunction with a scaling argument.
\end{proof}
\begin{lemma}[Trace estimates]\label{lem:trace_estimate}
    \begin{itemize}
        \item[$(i)$] For every $p \in [1, \infty)$, there is a constant $C>0$ (independent of $\e$) such that
        \[
        \|\psi\|_{L^p(\Gammafs)}^p 
        \leq  \frac{C}{\e} \left(\|\psi\|_{L^{p}(\OmegaIdx{f})}^p 
        + \e^p\|\nabla \psi\|_{L^{p}(\OmegaIdx{f})}^p\right)
        \quad \text{for all} \quad \psi \in W^{1, p}(\OmegaIdx{f}).
        \]
        \item[$(ii)$] For every $p \in [1, \infty)$, there is $C>0$ (independent of $\e$) such that
        \[
        \|\phi\|_{L^p(\Gamma_\e)} \leq C\|\phi\|_{W^{1, p}(\OmegaIdx{s})}
        \quad \text{for all} \quad \varphi \in W^{1, p}(\OmegaIdx{s}).
        \]
        \item[$(iii)$] Let $\Sigmaf \coloneqq \partial \OmegaIdx{f} \setminus (\Sigma \cup \Gamma_\e)$ denote the lateral boundary of the fluid domain.
        For every $p \in [1, \infty)$, there is a constant $C>0$ (independent of $\e$) such that
        \begin{equation*}
            \|\psi\|_{L^p(\Sigmaf)} \leq C \|\psi\|_{W^{1,p}(\OmegaIdx{f})}
            \quad \text{for all} \quad \psi \in W^{1,p}(\OmegaIdx{f}).
        \end{equation*}
    \end{itemize}
\end{lemma}
\begin{proof}
    $(i)$. This estimate for $\psi \in W^{1, p}(\OmegaIdx{f})$ is quite standard in homogenization and follows by a splitting argument and a change of variables, see for example \cite[Lemma 5]{Gahn2016} for a more detailed explanation.

    $(ii)$. This was proven in \cite[Lemma A.5]{Gahn2024}.
    
    $(iii).$ The estimate can be derived by a scaling argument. With the extension operator from \cref{lem:extension_operator}, it holds
    \begin{align*}
        \|\psi\|_{L^p(\Sigmaf)}  &= \|\mathbb{E}_\e^f\psi\|_{L^p(\Sigmaf)}
        \leq \|\mathbb{E}_\e^f\psi\|_{L^p(\partial \Sigma \times (0, \e))}
        = \e^{\frac{1}{p}} \|\mathbb{E}_\e^f\psi(\tildex, \e x_d)\|_{L^p(\partial \Sigma \times (0, 1))}
        \\
        &\leq C \e^{\frac{1}{p}} \|\mathbb{E}_\e^f\psi(\tildex, \e x_d)\|_{W^{1,p}(\Omega)}
        \leq  C \|\mathbb{E}_\e^f\psi\|_{W^{1,p}(\Omega_\e)}
        \leq C \|\psi\|_{W^{1,p}(\OmegaIdx{f})}.
    \end{align*}
    Where we used the trace theorem on $\Omega$. Note, as before, the estimate could be improved by a $\e$ in front of the term with the $\partial_{x_d}$ norm, but this is not needed in the following analysis.
\end{proof}

We later need to show boundedness of the temperature solutions to get better estimates for the time derivatives.
There we rely on the following lemma, which follows by standard results via a natural scaling argument:
\begin{lemma}[Interpolation embedding for thin domains]\label{lem:embedding_constant}
    There is $C>0$ such that, for any $\psi\in L^\infty(S;L^2(\Omega_\e))\cap L^2(S;H^1(\Omega_\e))$,
    \[
    \|\psi\|_{L^{r_d}(S\times\Omega_\e)}\leq C\e^{-\frac{1}{d+2}}\left(\|\psi\|_{L^{\infty}(S;L^2(\Omega_\e))}+\|\psi\|_{L^2(S;H^1(\Omega_\e))}\right).
    \]
    Here, $r_d = \frac{2(d+2)}{d}$, $d=2,3$.
\end{lemma}
\begin{proof}
For any $\psi\in L^\infty(S;L^2(\Omega_\e)\cap L^2(S;H^1(\Omega_\e))$, let $\bar{\psi}\in L^\infty(S;L^2(\Omega))\cap L^2(S;H^1(\Omega))$ be the corresponding upscaled function given by $\bar{\psi}(t,\tildex,x_d)=\psi(t,\tildex,\e x_d)$.
Using standard interpolation embeddings (see, e.g., \cite[Chapter 2, Inequality 3.8]{Ladyvzenskaja1968}), it holds
\[
\|\bar{\psi}\|_{L^{r_d}(S\times\Omega)}\leq C\left(\|\bar{\psi}\|_{L^{\infty}(S;L^2(\Omega))}+\|\bar{\psi}\|_{L^2(S;H^1(\Omega))}\right)
\]
for some constant $C>0$ which does not depend on $\e$.
With a standard scaling argument, we also see that
\[
\|\bar{\psi}\|_{L^{r_d}(S\times\Omega)}=\e^{\frac{-1}{r_d}}\|\psi\|_{L^{r_d}(S\times\Omega_\e)},\quad \|\bar{\psi}\|_{L^{\infty}(S;L^2(\Omega))}=\e^{\frac{-1}{2}}\|\psi\|_{L^{\infty}(S;L^2(\Omega_\e))}.
\]
Similarly, for the term including the $H^1$-norm, we estimate (note that $\e<1$)
\[
\|\bar{\psi}\|_{L^2(S;H^1(\Omega))}^2=\e^{-1}\left(\|\psi\|_{L^2(S\times\Omega_\e)}^2+\|\nabla_{\tildex} \psi\|_{L^2(S \times \Omega_\e)}^2
        + \e \|\partial_{x_d} \psi\|_{L^2(S \times \Omega_\e)}^2\right)\leq \e^{-1}\|\psi\|_{L^2(S;H^1(\Omega_\e))}^2.
\]
Since $\nicefrac{1}{r_d}-\nicefrac{1}{2}=\nicefrac{-1}{d+2}$, the statement follows.
\end{proof}
To show the $L^\infty$ estimate of the temperature field, we use a truncation argument. In the following lemma, we state some properties of the time derivative of the truncated function in the case that the original time derivative is only an element in the dual space. We denote by $\langle \cdot, \cdot \rangle_{X'}$ the dual pairing between elements from the space $X'$ and $X$.
\begin{lemma}\label{lem:time_derivative_cutoff}
    Let $\psi \in L^2(S;H^1(\Omega)) \cap H^1(S;H^1(\Omega)')$. For $k >0$ define the truncated function 
    \begin{equation*}
        \psi_k(t, x) \coloneqq \begin{cases}
            \psi(t,x) - k &\text{if } \psi(t,x) > k,
            \\
            0 &\text{otherwise}.
        \end{cases}
    \end{equation*}
    Then $\psi_k \in L^2(S;H^1(\Omega)) \cap H^1(S;H^1(\Omega)')$. Furthermore, for all $\phi \in L^2(S,H^1(\Omega))$ with $\phi(t,x)=0$ if $\psi(t,x)\geq k$, it holds 
    $\langle \partial_t \psi_k, \phi \rangle_{L^2(S;H^1(\Omega)')}=0$.
    If additionally $\psi(0) \in L^\infty(\Omega)$. Then for $k \geq \|\psi(0)\|_{L^\infty(\Omega)}$ and almost all $t \in S$ it holds
    \begin{equation}\label{eq:truncated_time_derivative}
        \int_0^t \langle \partial_t \psi(\tau), \psi_k(\tau)\rangle_{H^1(\Omega)'} \di{\tau}
        = 
        \int_0^t \langle \partial_t \psi_k(\tau), \psi_k(\tau)\rangle_{H^1(\Omega)'} \di{\tau}
        = 
        \frac{1}{2}(\psi_k(t), \psi_k(t))_{L^2(\Omega)}.
    \end{equation}
\end{lemma}
\begin{proof}
    That $\psi_k \in L^2(S;H^1(\Omega)) \cap H^1(S;H^1(\Omega)')$ follows via an approximation with a smooth mollification of $\psi$ and that the cut-off function is Lipschitz continuous. This mollification also directly yields the property $\langle \partial_t \psi_k, \phi \rangle_{L^2(S;H^1(\Omega)')}=0$ for $\phi=0$ if $\psi\geq k$.

    For the integral equality (\ref{eq:truncated_time_derivative}), we use the fundamental theorem of calculus (with $k \geq \|\psi(0)\|_{L^\infty(\Omega)}$) to obtain
    \begin{equation}\label{eq:1_time}
        (\psi_k(t), \psi_k(t))_{L^2(\Omega)} = 2\int_0^t \langle \partial_t \psi_k(\tau), \psi_k(\tau)\rangle_{H^1(\Omega)'} \di{\tau}.
    \end{equation}
    Furthermore, it holds 
    \begin{equation}\label{eq:2}
        \begin{split}
            (\psi_k(t), \psi_k(t))_{L^2(\Omega)} 
            &= (\chi_{\{\psi(t) > k\}}(\psi(t) - k), \psi_k(t))_{L^2(\Omega)}
            \\
            &= (\psi(t) - k, \psi_k(t))_{L^2(\Omega)}
            \\
            &= \int_0^t \langle \partial_t \psi(\tau), \psi_k(\tau)\rangle_{H^1(\Omega)'} \di{\tau}
            +
            \int_0^t \langle \partial_t \psi_k(\tau), \psi(\tau) - k\rangle_{H^1(\Omega)'} \di{\tau}.            
        \end{split}
    \end{equation}
    In the last integral, we can split $\psi - k$ into positive and negative parts. Since $(\psi - k)_{-} = 0$ if $\psi \geq k$, this leads to
    \begin{align*}
        \int_0^t \langle \partial_t \psi_k(\tau), \psi(\tau) - k\rangle_{H^1(\Omega)'} \di{\tau} 
        &= \int_0^t \langle \partial_t \psi_k(\tau), (\psi(\tau) - k)_{+}\rangle_{H^1(\Omega)'} \di{\tau} 
        + \underbrace{\int_0^t \langle \partial_t \psi_k(\tau), (\psi(\tau) - k)_{-}\rangle_{H^1(\Omega)'} \di{\tau}}_{=0}
        \\
        &= \int_0^t \langle \partial_t \psi_k(\tau), \psi_k(\tau)\rangle_{H^1(\Omega)'} \di{\tau}.
    \end{align*}
    Plugging the last equation into \Cref{eq:2} and comparing with \Cref{eq:1_time} yields the desired result.
\end{proof}
\subsection{Well-posedness and a prior estimates}\label{sec:analysis_micro_model}
Next, we investigate the underlying model on the microscale. First, we define 
\begin{equation*}
    L^2_0(\OmegaIdx{f}) \coloneqq \left\{
        \varphi \in L^2(\OmegaIdx{f}) : \int_{\OmegaIdx{f}} \varphi \di{x} = 0
    \right\}
    \quad \text{and} \quad 
    H^1_A(\OmegaIdx{f}) \coloneqq \left\{
        \varphi \in H^1(\OmegaIdx{f}) : \varphi_{|A} = 0
    \right\},
\end{equation*}
where $A \subseteq \partial \OmegaIdx{f}$. To shorten the notation we use, when applicable, the usual notation $H^1_0(\OmegaIdx{f}) = H^1_{\partial \OmegaIdx{f}}(\OmegaIdx{f})$. For the solutions of the temperature space we define $\Theta_\varepsilon \coloneqq H^1(\OmegaIdx{s}) \times H^1_{\SigmaInEps}(\OmegaIdx{f})$.
The dual of a finite product space is isometrically isomorph to the product of the dual spaces.
For any element $\psi \in \Theta_\e'$, there are unique $\psi^s \in H^1(\OmegaIdx{s})'$, $\psi^f \in H^1_{\SigmaInEps}(\OmegaIdx{f})'$ such that for all $\varphi\in \Theta_\e$
\begin{equation*}
    \langle \psi, \varphi \rangle_{\Theta_\e'} = 
    \langle \psi^s, \varphi^s \rangle_{H^1(\OmegaIdx{s})'} 
    + 
    \langle \psi^f, \varphi^f \rangle_{H^1_{\SigmaInEps}(\OmegaIdx{f})'}.
\end{equation*}
In the following, we identify $\psi=(\psi^s, \psi^f) \in \Theta_\e'$ and suppress the superscript when the context is clear from the dual pairing.

For the fluid velocity and pressure, we define
\begin{linenomath*}\begin{align*}
    V_\varepsilon &\coloneqq \left\{ v \in L^\infty(S; H^1(\OmegaIdx{f})^d) : v = \uEpsBC \text{ on } \partial \OmegaIdx{f} \text{ and } \dive{v} = 0 \text{ a.e. in } \OmegaIdx{f} \right\}
    \quad \text{ and } \\
    &\hspace{3cm}Q_\e \coloneqq L^\infty(S; L^2_0(\OmegaIdx{f})).
\end{align*}\end{linenomath*}

The weak formulation of the heat equation, for a fixed $v \in V_\e$, is given by: Find $\theta_\e = (\thetaIdx{s}, \thetaIdx{f})\in L^2(S; \Theta_\e)$ such that $\partial_t \theta_\e = (\partial_t \theta_\e^s, \partial_t \theta_\e^f)\in L^2(S; \Theta_\e')$ and for all $\varphi \in L^2(S;\Theta_\e)$ and almost all $t\in S$
\begin{linenomath*}\begin{equation}\label{eq:weak_formulation_temp}
    \begin{split}
        \langle \partial_t \theta_\e, \varphi \rangle_{H^1(\OmegaIdx{s})'} 
        &+ (\kappa\nabla\theta_\e, \nabla\varphi)_{ \OmegaIdx{s}} 
        + \frac{1}{\varepsilon}\langle\partial_t \theta_\e, \varphi\rangle_{H^1_{\SigmaInEps}(\OmegaIdx{f})'} 
        + \frac{1}{\varepsilon} (\kappa\nabla\theta_\e, \nabla\varphi)_{ \OmegaIdx{f}} 
        \\
        &+\frac{1}{\e} (v \cdot \nabla\theta_\e, \varphi)_{\OmegaIdx{f}} 
        + \alphafg (\jump{\theta_\varepsilon}, \jump{\varphi})_{ \Gammafs}
        = (f_\e^s, \varphi^s)_{\Gammafs} + (f_\e^f, \varphi^f)_{\Gammafs}.
    \end{split}
\end{equation}\end{linenomath*}
Here, $(\cdot, \cdot)_D$ denotes the usual $L^2$ inner product over the domain $D$. Given a $\psi_\e \in L^2(S;\Theta_\e)$, the weak form of the Stokes equation is: Find $(u_\e, p_\e) \in V_\e \times Q_\e$ such that for all $v \in H^1_0(\OmegaIdx{f})^d $ and almost all $t\in S$
\begin{linenomath*}\begin{equation}\label{eq:weak_formulation_stokes}
    \begin{split}
        &(\mu(\psi_\e) \nabla u_\e , \nabla v)_{ \OmegaIdx{f}} 
        - (p_\e, \dive{(v)})_{ \OmegaIdx{f}} = 0.
    \end{split}
\end{equation}\end{linenomath*}
For a fixed $\varepsilon > 0$ we now prove that there exists a solution of the coupled nonlinear system given by \cref{eq:weak_formulation_temp} and (\ref{eq:weak_formulation_stokes}). To show this, we follow a similar strategy as in \cite{Perez_2008}, where we split up the problem, first consider both weak formulations on their own, and then show a solution of the coupled system by a fix-point argument of an iterative scheme.

\begin{lemma}\label{lem:temperature_existence_bounds}
    Let the Assumptions \ref{item:A1}, \ref{item:A3} and \ref{item:A4} be fulfilled. For a fixed $\e > 0$ and a given $v \in V_\e$ there exists a unique solution $\theta_\varepsilon \in L^2(S; \Theta_\varepsilon)$ satisfying $\partial_t \theta_\e \in L^2(S; \Theta_\e')$, $\theta_\e(0, \cdot) = \theta_{\e, 0}$ a.e. and \cref{eq:weak_formulation_temp}. In addition, it holds
    \begin{linenomath*}\begin{equation}
    \label{eq:solution_esitmate}
        \begin{split}
            \|\theta_\e\|_{L^\infty(S; L^2(\OmegaIdx{s}))}  
            &+ \frac{1}{\sqrt{\e}} \|\theta_\e\|_{L^\infty(S; L^2(\OmegaIdx{f}))} 
            + \|\nabla \theta_\e\|_{L^2(S; L^2(\OmegaIdx{s}))} 
            \\
            &+ \frac{1}{\sqrt{\e}} \|\nabla \theta_\e\|_{L^2(S; L^2(\OmegaIdx{f}))}
            + \|\jump{\theta_\e}\|_{L^2(S\times\Gammafs)} 
             \leq C,
        \end{split}
    \end{equation}\end{linenomath*}
    where $C<\infty$ can be chosen independent of $\varepsilon$ and $v$. 
\end{lemma}
\begin{proof}
    We start by showing the existence and uniqueness of a solution for a fixed $v \in V_\e$. First, note that our problem is of the form
    \begin{equation}\label{eq:parabolic_form}
          \langle\partial_t \theta_\e, \varphi \rangle_{H^1(\OmegaIdx{s})'}
        + \frac{1}{\varepsilon} \langle\partial_t \theta_\e, \varphi \rangle_{H^1_{\SigmaInEps}(\OmegaIdx{f})'} 
        + A_\e(\theta_\e, \varphi; t) = (f_\e^s, \varphi^s)_{\Gammafs} + (f_\e^f, \varphi^f)_{\Gammafs},
    \end{equation}
    with the time-dependent bilinear form 
    \begin{multline*}
        A_\e(\cdot, \cdot ,t): L^2(S; \Theta_\e) \times L^2(S; \Theta_\e) \to \R,
        \\
        A_\e(\psi, \varphi; t) \coloneqq (\kappa\nabla\psi(t), \nabla\varphi(t))_{ \OmegaIdx{s}} 
        + \frac{1}{\varepsilon} (\kappa\nabla\psi(t), \nabla\varphi(t))_{ \OmegaIdx{f}} 
        +\frac{1}{\e} (v \cdot \nabla\psi(t), \varphi(t))_{\OmegaIdx{f}} 
        + \alphafg (\jump{\psi(t)}, \jump{\varphi(t)})_{ \Gammafs}.
    \end{multline*}
    By the embedding $H^1(\Omega) \hookrightarrow L^4(\Omega)$, the convective term $(v \cdot \nabla \theta_\e, \varphi)_{ \OmegaIdx{f}} $ is well defined. We obtain that $A_\e$ is bounded, to be precise for all $\psi, \varphi \in L^2(S;\Theta_\e)$ and almost all $t\in S$
    \begin{equation*}
        |A_\e(\psi, \varphi; t)| \leq C \left(1 + \frac{1}{\e} + \|v\|_{L^\infty(S; H^1(\OmegaIdx{f}))}\right)
        \|\psi(t)\|_{H^1(\OmegaIdx{f} \cup \OmegaIdx{s})}
        \|\phi(t)\|_{H^1(\OmegaIdx{f} \cup \OmegaIdx{s})},
    \end{equation*}
    for a $C < \infty$.
    Additionally, via integration by parts, we have (suppressing the time variable $t$) 
    \begin{equation}\label{eq:convection_indentity}
        (v \cdot \nabla \psi, \varphi)_{ \OmegaIdx{f}} 
        = 
        - (v \cdot \nabla \varphi, \psi)_{ \OmegaIdx{f}} 
        - (\dive{(v)} \psi, \varphi)_{ \OmegaIdx{f}} 
        + (v \cdot n, \psi \varphi)_{ \partial \OmegaIdx{f}}.
    \end{equation}
    Plugging in $\varphi = \psi$, that $\dive{(v)}=0$ and the boundary conditions on $\partial \OmegaIdx{f}$ into \cref{eq:convection_indentity} leads to 
    \begin{equation*}
        (v \cdot \nabla \psi, \psi)_{ \OmegaIdx{f}} = \frac{1}{2} \int_{\SigmaOutEps}  \uEpsBC \cdot n \, \psi^2 \di{\sigma_x} \geq 0
    \end{equation*}
    for almost all $t \in S$. Therefore, there exists a $\lambda > 0$ such that
    \begin{equation*}
        A_\e(\psi, \psi; t) + \lambda \left(\|\psi(t)\|^2_{L^2(\OmegaIdx{f})} + \|\psi(t)\|^2_{L^2(\OmegaIdx{s})}\right) \geq C \|\psi(t)\|^2_{H^1(\OmegaIdx{f} \cup \OmegaIdx{s})}
    \end{equation*}
    for all $\psi \in L^2(S;\Theta_\e)$ and almost all $t \in S$.
    Given the representation (\ref{eq:parabolic_form}) and the above estimates for the bilinear form $A_\e$, there exists a unique solution $\theta_\e \in L^2(S;\Theta_\e)$ \cite[Chapter 3, Proposition 2.3]{Showalter1997}. 
    
    For the norm estimate, we utilize the above computations of the convection term, use $\varphi=\theta_\e$ in the weak formulation, and integrate afterward over the time interval $(0, t)$ to obtain (with the trace estimates from \Cref{lem:trace_estimate})
    \begin{linenomath*}\begin{equation*}
        \begin{split}
            \frac{1}{2} \|\theta_\e(t) \|^2_{L^2(\OmegaIdx{s})} 
            &+ 
            \frac{1}{2\varepsilon} \|\theta_\e(t)\|^2_{L^2(\OmegaIdx{f})} 
            + 
            \kappa^s \| \nabla \theta_\e\|^2_{L^2((0, t)\times \OmegaIdx{s})}
            +
            \frac{1}{\e} \kappa ^f\| \nabla \theta_\e\|_{L^2((0, t)\times\OmegaIdx{f})}^2
            +
            \alphafg \|\jump{\theta_\varepsilon}\|^2_{L^2((0, t)\times\Gammafs)}
            \\
            &\leq C\|f^s_\varepsilon\|_{L^2((0, t)\times\Gammafs)}
            \|\theta_\e\|_{L^2((0, t); H^1(\OmegaIdx{s}))}
            + \frac{C}{\sqrt{\e}}\|f^f_\varepsilon\|_{L^2((0, t)\times\Gammafs)}
            \|\theta_\e\|_{L^2((0, t); H^1(\OmegaIdx{f}))}
            \\
            &\hspace{10pt}+ \frac{1}{2}\|\theta_{\varepsilon, 0}\|^2_{L^2(\OmegaIdx{s})} + 
            \frac{1}{2\varepsilon}\|\theta_{\e, 0}\|^2_{L^2(\OmegaIdx{f})}.
        \end{split}
    \end{equation*}\end{linenomath*}
    Applying Young's inequality followed by Gronwall’s inequality leads to the desired estimate (\ref{eq:solution_esitmate}), where $C$ is independent of $\e$.
\end{proof}
\begin{lemma}\label{lemma:infinity_estimate}
    Let the same assumptions as in \cref{lem:temperature_existence_bounds} be fulfilled and let $\theta_\e \in L^2(S; \Theta_\e)$ be the solution of \cref{eq:weak_formulation_temp}. Then there exists a $C > 0$ independent of $\e$ such that $\|\theta_\e\|_{L^\infty(S \times \Omega)} \leq C.$
\end{lemma}
\begin{proof}
    To show the estimate, we observe that the solution is linear in the source terms $f_\e^{s,f}$. Therefore, we can show the estimates for the two cases $f_\e^{s}=0, f_\e^{f}\neq0$ and $f_\e^{s}\neq 0, f_\e^{f}=0$ separately. Both cases follow via similar arguments, where we present the case $f_\e^{s}=0, f_\e^{f}\neq0$ and in the end shortly comment on the other case. To avoid cluttering, we do not introduce an additional index to denote this case.

    First, for any $k > 0$ construct the functions
    \begin{equation*}
        \theta_{\e,k}^s \coloneqq \begin{cases}
            \theta_{\e}^s - k \quad &\text{if } \theta_{\e}^s > k,
            \\
            0 \quad &\text{if } |\theta_{\e}^s| \leq k,
            \\
            \theta_{\e}^s + k \quad &\text{if } \theta_{\e}^s < -k,
        \end{cases}
        \quad \text{and} \quad 
        \theta_{\e,k}^f \coloneqq \begin{cases}
            \theta_{\e}^f - k \quad &\text{if } \theta_{\e}^f > k,
            \\
            0 \quad &\text{if } |\theta_{\e}^f| \leq k,
            \\
            \theta_{\e}^f + k \quad &\text{if } \theta_{\e}^f < -k,
        \end{cases}
    \end{equation*}
    and the sets
    \begin{equation*}
        A_{\e, k}^s(\tau) \coloneqq \left\{
            x \in \Omega_\e^s : |\theta_{\e}^s(\tau, x)| > k
        \right\}
        \quad \text{and} \quad 
            A_{\e, k}^f(\tau) \coloneqq \left\{
            x \in \Omega_\e^f : |\theta_{\e}^f(\tau, x)| > k
        \right\}.
    \end{equation*}
    Note $\nabla \theta_{\e,k}^{s,f} = \chi_{A_{\e, k}^{s,f}} \nabla \theta_{\e, k}^{s,f}$. Since $\theta_{\e, k}^{s,f} = (\theta_\e^{s,f} - k)_{+} + (\theta_\e^{s,f} + k)_{-}$, applying \cref{lem:time_derivative_cutoff} to both the positive and negative part yields, $\partial_t \theta_{\e, k}^{s,f} \in L^2(S;H^1(\OmegaIdx{s,f})')$.
    By Assumption \ref{item:A3} we set $k > C_0 = \sup_{\e > 0}\|\theta_{\e, 0}\|_{L^\infty(\Omega)}$. Next, we take $\theta_{\e, k} = (\theta_{\e,k}^s, \theta_{\e,k}^f)$ as a test function in \cref{eq:weak_formulation_temp}, leading, for almost all $\tau \in S$, to
    \begin{equation*}
        \begin{split}
            \langle \partial_t \thetaIdx{s}(\tau), \theta_{\e,k}^s(\tau) \rangle_{H^1(\OmegaIdx{s})'} 
            &+ \frac{1}{\e} \langle \partial_t \thetaIdx{f}(\tau), \theta_{\e,k}^f(\tau) \rangle_{H^1_{\SigmaInEps}(\OmegaIdx{f})'}  
            + \kappa^s  \|\nabla \theta_{\e,k}^s(\tau)\|^2_{L^2(\OmegaIdx{s})} 
            + \frac{1}{\e} \kappa^f  \|\nabla \theta_{\e,k}^f(\tau)\|^2_{L^2(\OmegaIdx{f})} 
            \\
            &+ \frac{1}{\e} (v(\tau) \cdot \nabla\theta_\e(\tau), \theta_{\e,k}^f(\tau))_{\OmegaIdx{f}} 
            + \alpha \|\llbracket \theta_{\e, k}(\tau) \rrbracket\|^2_{L^2(\Gamma_\e)} = (f^f_\e(\tau), \theta_{\e,k}^f(\tau))_{\Gammafs}.            
        \end{split}        
    \end{equation*}
    Integrating over the time interval $(0, t) \subset S$ and using \Cref{eq:truncated_time_derivative} from \Cref{lem:time_derivative_cutoff} we obtain
    \begin{align*}
        \frac{1}{2}\|\theta_{\e,k}^s(t)\|^2_{L^2(\OmegaIdx{s})} 
        &+ \frac{1}{2\e} \|\theta_{\e,k}^f(t)\|^2_{L^2(\OmegaIdx{f})} 
        + \kappa^s  \|\nabla \theta_{\e,k}^s\|^2_{L^2((0, t) \times \OmegaIdx{s})} 
        + \frac{1}{\e} \kappa^f  \|\nabla \theta_{\e,k}^f\|^2_{L^2((0, t) \times \OmegaIdx{f})} 
        \\
        &+ \frac{1}{\e} (v \cdot \nabla\theta_\e, \theta_{\e,k}^f )_{(0, t) \times \OmegaIdx{f}} 
        + \alpha \|\llbracket \theta_{\e, k} \rrbracket\|^2_{L^2((0, t) \times \Gamma_\e)} = (f_\e^f, \theta_{\e,k}^f)_{(0, t) \times \Gammafs}
    \end{align*}
    Since $\nabla \theta_{\e,k}^f = \chi_{A_{\e, k}^f} \nabla \theta_{\e}^f$, for the the convection it holds
    \begin{align*}
        \frac{1}{\e} (v \cdot \nabla\theta_\e, \theta_{\e,k}^f )_{(0, t) \times \OmegaIdx{f}} 
        = \frac{1}{\e} (v \cdot \nabla \theta_{\e,k}^f, \theta_{\e,k}^f )_{(0, t) \times \OmegaIdx{f}}.
    \end{align*}
    \cref{eq:convection_indentity} and $\dive{v} = 0$ yield 
    \begin{align*}
        \frac{1}{\e} (v \cdot \nabla \theta_{\e,k}^f, \theta_{\e,k}^f )_{(0, t) \times \OmegaIdx{f}} =
        \frac{1}{2 \e} (v \cdot n \theta_{\e,k}^f, \theta_{\e,k}^f )_{(0, t) \times \partial \OmegaIdx{f}}
        = 
        \frac{1}{2 \e} (v \cdot n \theta_{\e,k}^f, \theta_{\e,k}^f )_{(0, t) \times \partial \SigmaOutEps} \geq 0.
    \end{align*}
    Therefore we obtain
    \begin{equation}\label{eq:helper_infty_estimates}
        \begin{split}
            \frac{1}{2}\|\theta_{\e,k}^s(t)\|^2_{L^2(\OmegaIdx{s})} 
            &+ \frac{1}{2\e} \|\theta_{\e,k}^f(t)\|^2_{L^2(\OmegaIdx{f})} 
            + \kappa^s  \|\nabla \theta_{\e,k}^s\|^2_{L^2((0, t) \times \OmegaIdx{s})} 
            + \frac{1}{\e} \kappa^f  \|\nabla \theta_{\e,k}^f\|^2_{L^2((0, t) \times \OmegaIdx{f})} 
            \\
            &+ \alpha \|\llbracket \theta_{\e, k} \rrbracket\|^2_{L^2((0, t) \times \Gamma_\e)} 
            \leq  (f_\e^f, \theta_{\e,k}^f)_{(0, t) \times \Gammafs}.            
        \end{split}
    \end{equation}
    By Assumption \ref{item:A4} and the trace estimate in \cref{lem:trace_estimate} the source term on the right-hand side can be estimated by
    \begin{equation}\label{eq:trace_transform_temperature}
        \begin{split}
           \left | (f_\e^f, \theta_{\e,k}^f)_{(0, t) \times \Gammafs} \right | 
           &\leq \|f_\e^f\|_{L^\infty((0, t) \times \Gamma_\e)} \int_0^t \int_{\Gamma_\e} |\theta_{\e, k}^f|
           \di{x}\di{\tau}
           \\
           &\leq \frac{C}{\e} \|f_\e^f\|_{L^\infty((0, t) \times \Gamma_\e)} \left( \|\theta_{\e, k}^f\|_{L^1((0, t) \times \OmegaIdx{f})} + \|\nabla \theta_{\e, k}^f\|_{L^1((0, t) \times \OmegaIdx{f})} \right)
           \\
           &\leq \frac{C_\delta}{\e}\|f_\e^f\|^2_{L^\infty((0, t) \times \Gamma_\e)} \|1\|_{L^2((0, t) \times A_{\e, k}^f)}^2 
           + \frac{C}{\e} \|\theta_{\e, k}^f\|_{L^2((0, t) \times \OmegaIdx{f})}^2 
           + \frac{\delta}{\e} \|\nabla \theta_{\e, k}^f\|_{L^2((0, t) \times \OmegaIdx{f})}^2.            
        \end{split}
    \end{equation}
    In the last step, Young's inequality with a $\delta > 0 $ was used. We choose $\delta$ such that we can include the gradient term on the left side and additionally assume $k \geq \|f_\e\|_{L^\infty((0, t) \times \Gamma_\e)}$, to arrive at
    \begin{align*}
        \frac{1}{\e} \|\theta_{\e,k}^f(t)\|^2_{L^2(\OmegaIdx{f})} 
        + \frac{1}{\e}\|\nabla \theta_{\e,k}^f\|^2_{L^2((0, t) \times \OmegaIdx{f})} 
        \leq \frac{C}{\e} \left [\|\theta_{\e, k}^f\|_{L^2((0, t) \times \OmegaIdx{f})}^2 + k^2 \int_0^t |A_{\e, k}^f(\tau)| \di{\tau}  \right ].
    \end{align*}
    With the extension operator $\mathbb{E}_\e^f$ from \cref{lem:extension_operator}, we transform to the non-rough thin layer. Define $B^f_{\e, k}(\tau) \coloneqq \{x \in \Omega_\e : \mathbb{E}_\e^f \theta^f_{e, k}(\tau, x) \neq 0 \}$, then clearly $|A^f_{\e, k}(\tau)| \leq |B^f_{\e, k}(\tau)|$. Multiplying by $\e$, we obtain that on the non-rough thin layer, it holds
    \begin{align*}
        \|\mathbb{E}_\e^f\theta_{\e,k}^f(t)\|^2_{L^2(\Omega_\e)} 
        + \|\nabla \mathbb{E}_\e^f \theta_{\e,k}^f\|^2_{L^2((0, t) \times \Omega_\e)} 
        \leq C \left [\|\mathbb{E}_\e^f\theta_{\e, k}^f\|_{L^2((0, t) \times \Omega_\e)}^2 + k^2 \int_0^t |B_{\e, k}^f(\tau)| \di{\tau}  \right ],
    \end{align*}
    and consequently by Gronwall's inequality
    \begin{equation*}
        \|\mathbb{E}_\e^f\theta_{\e,k}^f\|_{L^\infty(S;L^2(\Omega_\e))\cap L^2(S;H^1(\Omega_\e))}\leq Ck\left(\int_S |B_{\e, k}^f(\tau)| \di{\tau}\right)^{\frac{1}{2}}.
    \end{equation*}
    Applying \cite[Chapter 2, Theorem 6.1]{Ladyvzenskaja1968}, we now have to be careful regarding the $\e$ dependency of the estimate. First we obtain 
    \begin{equation*}
        \|\mathbb{E}_\e^f \theta_{\e,k}^f\|_{L^\infty(S\times\Omega_\e)}\leq C\left(1+C\beta_\e^{\frac{d+2}{2}}|S|^{\frac{1}{2}}|\Omega_\e|^{\frac{1}{2}}\right)\leq C\left(1+C\e^\frac{1}{2}\beta_\e^{\frac{d+2}{2}}\right),
    \end{equation*}
    where $\beta_\e= \mathcal{O}(\e^{-\frac{1}{d+2}})$ is the embedding constant from \cref{lem:embedding_constant}.
    Therefore $\|\mathbb{E}_\e^f \theta_{\e}^f\|_{L^\infty(S\times\Omega_\e)}\leq C$ independent of $\e$.
    Consequently $\|\theta_{\e}^f\|_{L^\infty(S\times\OmegaIdx{f})} = \|\mathbb{E}_\e^f \theta_{\e}^f\|_{L^\infty(S\times\OmegaIdx{f})} \leq C$, and $\|\theta_{\e}^f\|_{L^\infty(S\times \Gammafs)} \leq C$.

    Plugging this result back into \Cref{eq:helper_infty_estimates} and choosing $k$ large enough, we obtain
    \begin{equation*}
        \|\theta_{\e, k}^s(t)\|_{L^2(\OmegaIdx{s})} = 0 \quad \text{ for almost all } t \in S,
    \end{equation*}
    resulting in $\theta_{\e, k}^s = 0$ in $L^2(S\times \OmegaIdx{s})$ and consequently $\thetaIdx{s} \in L^\infty(S\times \OmegaIdx{s})$.

    As mentioned at the beginning, the estimate for $f_\e^{s}\neq 0, f_\e^{f}=0$ follows by the same steps only with the roles of $\thetaIdx{s}$ and $\thetaIdx{f}$ switched, the other case is also slightly simpler since the estimates in the solid domain do not depend on $\e$. Adding the solutions for both cases yields the solution for the general case $f_\e^{s}\neq 0, f_\e^{f}\neq0$ and consequently the general $L^\infty$ estimate.
\end{proof}
\begin{lemma}\label{lemma:time_derivative_temp}
    Let the same assumptions be fulfilled as in \cref{lem:temperature_existence_bounds} and let $\theta_\e \in L^2(S; \Theta_\e)$ be the solution of \cref{eq:weak_formulation_temp}, with $\partial_t \theta_\e \in L^2(S; \Theta_\e')$. 
    If $\|v_\e\|_{L^2(S \times \OmegaIdx{f})} \leq C \sqrt{\e}$, then there exists a $C > 0$ independent of $\e$ such that
    \begin{linenomath*}\begin{equation}
    \label{eq:time_derivative_estimate}
        \begin{split}
            \|\partial_t \theta_\e\|_{L^2(S; H^{1}(\OmegaIdx{s})')}  \leq C
            \quad \text{ and } \quad 
            \|\partial_t \theta_\e\|_{L^2(S; H^{1}_{\SigmaInEps}(\OmegaIdx{f})')}
            \leq C \sqrt{\e}.
        \end{split}
    \end{equation}\end{linenomath*}
\end{lemma}
\begin{proof}
    By testing \cref{eq:weak_formulation_temp} with arbitrary $\varphi \in L^2(S;\Theta_\e)$ one obtains for almost all $t \in S$
    \begin{linenomath*}\begin{equation*}
        \begin{split}
            &\bigl | \langle \partial_t \theta_\e, \varphi \rangle_{H^1(\OmegaIdx{s})'} 
            +
            \frac{1}{\e} \langle \partial_t \theta_\e, \varphi\rangle_{H^1_{\SigmaInEps}(\OmegaIdx{f})'}\bigr | 
            \\
            &\leq
            |(\kappa \nabla \theta_\e, \nabla\varphi)_{ \OmegaIdx{s}}|
            +
            \frac{1}{\e}|(\kappa \nabla\theta_\e, \nabla \varphi)_{\OmegaIdx{f}}|
            +
            \alphafg |(\jump{\theta_\varepsilon}, \jump{\varphi})_{\Gammafs} |
            +
            |(f_\varepsilon^s, \varphi^s)_{\Gammafs}|
            +
            |(f_\varepsilon^f, \varphi^f)_{\Gammafs}|
            + |\frac{1}{\e} (v_\e \cdot \nabla \theta_\e, \varphi)_{\OmegaIdx{f}}|
            \\
            &\leq \kappa^s
            \|\nabla \theta_\e\|_{L^2(\OmegaIdx{s})} \|\nabla \varphi^{g}\|_{L^2(\OmegaIdx{s})} 
            +
            \frac{\kappa^f}{\e} \|\nabla \theta_\e\|_{L^2(\OmegaIdx{f})} \|\nabla \varphi^{f}\|_{L^2(\OmegaIdx{f})} 
            + |\frac{1}{\e} (v_\e \cdot \nabla \theta_\e, \varphi)_{\OmegaIdx{f}}|
            \\
            &\hspace{10pt} + \alpha \|\jump{\theta_\e}\|_{L^2(\Gammafs)} (\|\varphi^f\|_{L^2(\Gammafs)} + \|\varphi^s\|_{L^2(\Gammafs)}) +
            C\|f_\varepsilon^s\|_{L^2(\Gammafs)}
            \|\varphi^s\|_{H^1(\OmegaIdx{s})}
            +\|f_\varepsilon^f\|_{L^2(\Gammafs)}
            \|\varphi^f\|_{L^2(\Gammafs)}.
        \end{split}
    \end{equation*}\end{linenomath*}
    Choosing $\varphi^f = 0$, integrating over the time interval $S$, followed by Hölder's inequality and applying (\ref{eq:solution_esitmate}) gives the desired estimate for $\partial_t \theta_\e^s$. 

    For estimating $\partial_t \thetaIdx{f}$ we set $\varphi^s = 0$ then the only problematic part is the convection term (the trace over $\Gamma_\e$ can be estimated by \cref{lem:trace_estimate}). By \cref{eq:convection_indentity} and $\dive{v_\e} = 0$
    \begin{linenomath*}\begin{equation*}
        \begin{split}
             \frac{1}{\e} (v_\e \cdot \nabla \theta_\e, \varphi^f)_{S \times \OmegaIdx{f}} 
             = 
             - \frac{1}{\e} (v_\e \theta_\e, \nabla \varphi^f)_{S \times\OmegaIdx{f}}  
             + 
             \frac{1}{\e} (v_\e \cdot n \theta_\e, \varphi^f)_{S \times \partial \OmegaIdx{f}}  .
        \end{split}
    \end{equation*}\end{linenomath*}
    For the first term on the right-hand side, we use the results from \cref{lemma:infinity_estimate} and the assumption on $v_\e$ to obtain
    \begin{equation*}
           \left|\frac{1}{\e} (v_\e \cdot \theta_\e, \nabla\varphi^f)_{S \times\OmegaIdx{f}} \right| 
           \leq \frac{1}{\e} \|\theta\|_{L^\infty(S \times \OmegaIdx{f})} \|v_\e\|_{L^2(S \times \OmegaIdx{f})} \|\nabla \varphi^f\|_{L^2(S \times \OmegaIdx{f})}  
           \leq \frac{C}{\sqrt{\e}} \|\varphi^f\|_{L^2(S; H^1(\OmegaIdx{f}))}.
    \end{equation*}
    For the second term, we again use our choice of boundary conditions, leading to
    \begin{linenomath*}\begin{equation*}
        \begin{split}
             \left| \frac{1}{\e} (v_\e \cdot n \theta_\e, \varphi^f)_{S \times \partial \OmegaIdx{f}} \right|
             \leq \frac{1}{\e} \|v_\e\|_{L^\infty(S \times \partial \OmegaIdx{f})} \|\theta_\e\|_{L^2(S \times \SigmaOutEps)} \|\varphi^f\|_{L^2(S \times \SigmaOutEps)}.
        \end{split}
    \end{equation*}\end{linenomath*}
    With \cref{lem:trace_estimate}. $(iii)$ and the estimates of $\thetaIdx{f}$ from \cref{lem:temperature_existence_bounds}, we obtain
    \begin{linenomath*}\begin{equation*}
        \begin{split}
             \left| \frac{1}{\e} (v_\e \cdot n \theta_\e, \varphi^f)_{S \times \partial \OmegaIdx{f}} \right|
             \leq \frac{C}{\e} \|\theta_\e\|_{L^2(S \times \SigmaOutEps)} \|\varphi^f\|_{L^2(S \times \SigmaOutEps)}
             \leq \frac{C}{\sqrt{\e}} \|\varphi^f\|_{L^2(S; H^1(\OmegaIdx{f}))} .
        \end{split}
    \end{equation*}\end{linenomath*}
    Multiplying the complete inequality by $\e$ we get the desired result.
\end{proof}
The additional assumption $\|v_\e\|_{L^2(S \times \OmegaIdx{f})} \leq C \sqrt{\e}$ is not a new requirement to the problem, as the solution of the Stokes problem satisfies this estimate naturally. 
\begin{lemma}\label{lemma:existence_bounds_stokes}
    Let the Assumptions \ref{item:A2} and \ref{item:A5} be fulfilled. Then for all $\e > 0$ and $\psi_\e \in L^2(S \times \OmegaIdx{f})$ there exists a unique solution $(u_\e, p_\e) \in V_\e \times Q_\e$ of \cref{eq:weak_formulation_stokes}. In addition, it holds
    \begin{linenomath*}\begin{equation}\label{eq:solution_esitmate_stokes}
        \frac{1}{\sqrt{\e}}\|u_\e\|_{L^\infty(S; L^2(\OmegaIdx{f}))} 
        + 
        \sqrt{\e} \|\nabla u_\e\|_{L^\infty(S; L^2(\OmegaIdx{f}))} \leq C
        \quad \text{ and } \quad 
        \sqrt{\e}\|p_\e\|_{L^\infty(S; L^2(\OmegaIdx{f}))} \leq \frac{C}{\e}
    \end{equation}\end{linenomath*}
    for an $C<\infty$ independent of $\varepsilon$ and $\psi_\e$.
\end{lemma}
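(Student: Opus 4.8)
The plan is to treat the Stokes system as a standard saddle-point problem for each fixed time \(t\in S\), with the temperature entering only through the bounded, coercive coefficient \(\mu(\psi_\e)\). First I would reduce to homogeneous boundary data: using the extension \(U_\e\in H^1(\OmegaIdx f)^d\) from \ref{item:A5} (which is divergence-free, attains \(\uEpsBC\) on \(\partial\OmegaIdx f\), and satisfies \(\sqrt\e\,\|\nabla U_\e\|_{L^2(\OmegaIdx f)}\le C\)), I write \(u_\e = U_\e + w_\e\) with \(w_\e\in H^1_0(\OmegaIdx f)^d\) divergence-free. The pair \((w_\e,p_\e)\) then solves a weak Stokes problem with right-hand side \(-(\mu(\psi_\e)\nabla U_\e,\nabla v)_{\OmegaIdx f}\). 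Existence and uniqueness for each fixed \(t\) is classical: on the space of divergence-free \(H^1_0\) fields, the bilinear form \((v,v')\mapsto(\mu(\psi_\e(t))\nabla v,\nabla v')_{\OmegaIdx f}\) is coercive with constant \(\underlinehigh\mu\) (after Poincaré) and bounded by \(\overline\mu\) by \ref{item:A2}, so Lax–Milgram gives a unique \(w_\e(t)\); the pressure \(p_\e(t)\in L^2_0(\OmegaIdx f)\) is recovered from the inf-sup (LBB) condition for \(\OmegaIdx f\). Measurability in \(t\) (hence the \(L^\infty(S;\cdot)\) membership) follows since \(\psi_\e\in L^2(S;H^1(\OmegaIdx f))\) and the solution map is continuous in the data.

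Next I would derive the velocity estimate. Testing the \(w_\e\)-equation with \(v=w_\e\) and using divergence-freeness to kill the pressure term gives \(\underlinehigh\mu\,\|\nabla w_\e\|_{L^2(\OmegaIdx f)}^2 \le \overline\mu\,\|\nabla U_\e\|_{L^2(\OmegaIdx f)}\,\|\nabla w_\e\|_{L^2(\OmegaIdx f)}\), hence \(\|\nabla w_\e\|_{L^2(\OmegaIdx f)}\le (\overline\mu/\underlinehigh\mu)\|\nabla U_\e\|_{L^2(\OmegaIdx f)}\), so \(\sqrt\e\,\|\nabla w_\e\|_{L^2(\OmegaIdx f)}\le C\) uniformly by \ref{item:A5}; the triangle inequality gives \(\sqrt\e\,\|\nabla u_\e\|_{L^\infty(S;L^2(\OmegaIdx f))}\le C\). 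For the \(L^2\)-bound on \(u_\e\) I would use a Poincaré inequality on the thin layer \(\OmegaIdx f\) which, because the layer has thickness \(\mathcal O(\e)\) and \(w_\e\) vanishes on \(\Sigma\cup\Gammafs\), scales as \(\|w_\e\|_{L^2(\OmegaIdx f)}\le C\e\,\|\nabla w_\e\|_{L^2(\OmegaIdx f)}\) (this is the standard anisotropic Poincaré estimate in the \(x_d\)-direction, provable by the same rescale-to-\(\Sigma\times(0,1)\) argument used in \cref{lem:trace_estimate,lem:embedding_constant}); combined with the gradient bound this yields \(\|w_\e\|_{L^2(\OmegaIdx f)}\le C\sqrt\e\), and similarly for \(U_\e\), hence \(\tfrac1{\sqrt\e}\|u_\e\|_{L^\infty(S;L^2(\OmegaIdx f))}\le C\).

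The pressure bound is the main obstacle and the reason for the geometric hypotheses on \(\gamma\) (the flat collar near \(\partial Y^{d-1}\) and the smoothness of \(\gamma\)). The inf-sup constant \(\beta_\e\) of the pair \((H^1_0(\OmegaIdx f)^d, L^2_0(\OmegaIdx f))\) degenerates as \(\e\to0\); one has to track its \(\e\)-dependence carefully. The route is: for given \(q\in L^2_0(\OmegaIdx f)\), construct a test field \(v_q\in H^1_0(\OmegaIdx f)^d\) with \(\dive v_q=q\) and a controlled norm — this is a Bogovskii-type right-inverse of the divergence on the thin rough domain. Because \(\gamma\equiv1\) near \(\partial Y^{d-1}\), the domain \(\OmegaIdx f\) near its lateral boundary coincides with the flat layer \(\Sigma\times(0,\e)\), and the roughness sits in detached humps; one builds \(v_q\) by solving auxiliary divergence problems cell-by-cell on \(\e Z\) (rescaled to the fixed reference cell \(Z\), where the Bogovskii constant is \(\e\)-independent since \(\gamma\in C^1\) makes \(Z\) Lipschitz), patching through the flat collar. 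Tracking the Jacobians of the \(\e\)-rescaling gives \(\|\nabla v_q\|_{L^2(\OmegaIdx f)}\le (C/\e)\|q\|_{L^2(\OmegaIdx f)}\), i.e. \(\beta_\e\gtrsim\e\). Then from the weak form with this \(v_q\), \(\|q\|_{L^2}^2=(\dive v_q,q)=(\mu(\psi_\e)\nabla u_\e,\nabla v_q)\le\overline\mu\|\nabla u_\e\|_{L^2}\|\nabla v_q\|_{L^2}\le (C/\e)\|\nabla u_\e\|_{L^2}\|q\|_{L^2}\), so \(\|p_\e\|_{L^2(\OmegaIdx f)}=\sup_q(\dive v_q,p_\e)/\|\nabla v_q\|/\dots\le (C/\e)\|\nabla u_\e\|_{L^2}\le C/\e^{3/2}\), which is exactly \(\sqrt\e\,\|p_\e\|_{L^\infty(S;L^2(\OmegaIdx f))}\le C/\e\). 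I would cite the construction of the thin-layer divergence right-inverse and its scaling from \cite{Bayda_1989} rather than reproduce it, and emphasize that the flat-collar assumption on \(\gamma\) is precisely what makes the patching near \(\Sigmaf\) work.
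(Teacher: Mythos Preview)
Your proposal is correct and matches the paper's proof closely. Existence, uniqueness, and the velocity bounds are handled identically: the paper also tests with $v=u_\e-U_\e$ (your $w_\e$), uses the coercivity/boundedness of $\mu$ from \ref{item:A2} and the bound $\sqrt{\e}\,\|\nabla U_\e\|_{L^2(\OmegaIdx{f})}\le C$ from \ref{item:A5}, and then applies the thin-layer Poincar\'e inequality (extending by zero to the flat layer $\Sigma\times(0,\e)$) to get $\|u_\e\|_{L^2(\OmegaIdx{f})}\le C\e\,\|\nabla u_\e\|_{L^2(\OmegaIdx{f})}\le C\sqrt{\e}$.

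For the pressure, the paper likewise defers to \cite{Bayda_1989} but describes the mechanism slightly differently: rather than building a Bogovskii right-inverse on $\OmegaIdx{f}$ and tracking $\beta_\e$, one extends the distribution $\nabla p_\e$ from $\OmegaIdx{f}$ to the flat layer $\Omega_\e=\Sigma\times(0,\e)$ and then invokes the Ne\v{c}as inequality there, where the $\e$-scaling of the constant is explicit. These are dual formulations of the same inf-sup estimate, so the difference is one of presentation; both rely on the smoothness of $\gamma$ and the flat collar near $\partial Y^{d-1}$, and both yield $\sqrt{\e}\,\|p_\e\|_{L^\infty(S;L^2(\OmegaIdx{f}))}\le C/\e$. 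One caution: your sketch of the cell-wise Bogovskii step would, by itself, give an $\e$-independent constant on each cell; the $1/\e$ loss genuinely comes from handling the global (non-cell-local) part of the mean-zero constraint across the macroscopic $\tilde{x}$-extent of the layer, which is why the paper's extension-to-the-flat-layer viewpoint is perhaps cleaner. Since you cite \cite{Bayda_1989} for the details anyway, this does not affect the validity of your argument.
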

\begin{proof}
    Existence and uniqueness follow from \cite[Theorem 5.1]{Girault_1981}, where the proof for constant viscosity is stated, which is transferable to our setup given the bounds assumed in \ref{item:A2}. 
    Furthermore, by Assumption \ref{item:A5}, and the extension of the boundary function $U_e$, the velocity estimates follow by testing with $u_\e - U_\e$ and the Poincaré inequality, as carried out in \cite[Section 3.2]{Bayda_1989}. 
    The pressure estimate follows by combining the estimate of the velocity gradient with the results in \cite[Lemma 4.7]{Fabricius2023} for the Bogovskii operator in thin porous layers (which can be directly transferred to our setup). See also \cite{Fabricius2022} for further studies of the Bogovskii operator in thin non-porous layers.
\end{proof}

Next, we introduce the operators $\mathbb{T}_\e: V_\e \to L^2(S \times \Omega)$, which maps a prescribed velocity field to the corresponding temperature field via \cref{eq:weak_formulation_temp}, and the Stokes operator $\mathbb{S}_\e: L^2(S \times \Omega) \to V_\e$ of \cref{eq:weak_formulation_stokes}, which returns the velocity field for a given temperature distribution.
We are looking for fixed points of the operator
\begin{equation}\label{eq:solution_operator}
    \mathbb{F}_\e : L^2(S \times \Omega) \to L^2(S \times \Omega), \quad \psi_\e \mapsto \mathbb{T}_\e\left(\mathbb{S}_\e(\psi_\e)\right).
\end{equation}
and point out that any such fixed point is automatically a weak solution to the full problem.
In the following, we first investigate the continuity properties of $\mathbb{T}_\e$ and $\mathbb{S}_\e$.
\begin{lemma}\label{lem:stokes_continuous}
    Let the Assumptions \ref{item:A2} and \ref{item:A5} be fulfilled.
    Given a sequence $\{\psi_{k}\}_{k \in \N} \subset L^2(S \times \Omega)$ with $\psi_k \xrightarrow[]{k \to \infty} \psi$ in $L^2(S \times \Omega)$, it holds
    \begin{equation*}
        \mathbb{S}_\e(\psi_k) \xrightharpoonup[]{k\to \infty}\mathbb{S}_\e(\psi) \quad \text{ in } \quad L^2(S; H^1(\OmegaIdx{f})^d).
    \end{equation*}
\end{lemma}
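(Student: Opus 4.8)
The plan is to establish continuity of $\mathbb{F}_\e$ by a standard ``weak limit of weak solutions'' argument: extract a weakly convergent subsequence of $\{\mathbb{F}_\e(\psi_k)\}$ using the uniform bounds from \cref{lemma:existence_bounds_stokes}, pass to the limit in the weak formulation \cref{eq:weak_formulation_stokes}, identify the limit as $\mathbb{F}_\e(\psi)$ via uniqueness, and then upgrade subsequential convergence to convergence of the whole sequence by the usual Urysohn-type argument. Throughout, $\e > 0$ is fixed, so we may freely use $\e$-dependent constants.

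First I would denote $u_k := \mathbb{F}_\e(\psi_k)$. By \cref{lemma:existence_bounds_stokes}, the family $\{u_k\}$ is bounded in $L^\infty(S; H^1(\OmegaIdx{f})^d)$ (the bound depends on $\e$ but not on $k$, since $\psi_k$ enters only through $\mu(\psi_k)$, which is bounded by $\overline{\mu}$ uniformly via \ref{item:A2}), hence bounded in $L^2(S; H^1(\OmegaIdx{f})^d)$. Along a subsequence (not relabeled) we get $u_k \rightharpoonup u$ in $L^2(S; H^1(\OmegaIdx{f})^d)$ for some limit $u$. Because each $u_k = \uEpsBC$ on $\partial\OmegaIdx{f}$ and $\dive u_k = 0$ a.e., and these constraints are preserved under weak $H^1$-convergence (the trace operator and the divergence are weakly continuous), we have $u \in V_\e$. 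It remains to show $u$ solves \cref{eq:weak_formulation_stokes} with data $\psi$, i.e. $u = \mathbb{F}_\e(\psi)$.

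The main obstacle, and the only step requiring care, is passing to the limit in the nonlinear viscosity term $(\mu(\psi_k)\nabla u_k, \nabla v)_{\OmegaIdx{f}}$ (integrated in time against a test function in $L^\infty(S)$, or tested with $v\in H^1_0(\OmegaIdx{f})^d$ for a.e.\ $t$). Here $\nabla u_k \rightharpoonup \nabla u$ only weakly in $L^2$, so I cannot simply multiply weak-times-weak. The fix uses the strong convergence of $\psi_k \to \psi$ in $L^2(S\times\Omega)$: along a further subsequence $\psi_k \to \psi$ a.e., and since $\mu$ is continuous and bounded (\ref{item:A2}), dominated convergence gives $\mu(\psi_k) \to \mu(\psi)$ strongly in $L^q(S\times\OmegaIdx{f})$ for every $q < \infty$. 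Then $\mu(\psi_k)\nabla v \to \mu(\psi)\nabla v$ strongly in $L^2(S\times\OmegaIdx{f})$ for fixed test $v$, and pairing this strong convergence with the weak convergence $\nabla u_k \rightharpoonup \nabla u$ in $L^2$ yields $(\mu(\psi_k)\nabla u_k,\nabla v)\to(\mu(\psi)\nabla u,\nabla v)$. The pressure term is linear in $p_k$, and by the pressure bound in \cref{lemma:existence_bounds_stokes} we may also extract $p_k \rightharpoonup p$ (in the appropriate $L^2$-in-time, $L^2_0$-in-space sense), so $(p_k, \dive v)\to (p,\dive v)$; the last term $(q,\dive u_k)$ passes trivially since $\dive u_k = 0$. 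Hence $(u,p)$ satisfies \cref{eq:weak_formulation_stokes} with data $\psi$.

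Finally, by the uniqueness part of \cref{lemma:existence_bounds_stokes}, $u = \mathbb{F}_\e(\psi)$; in particular the limit does not depend on the chosen subsequence. A standard subsequence argument then concludes: every subsequence of $\{u_k\}$ has a further subsequence converging weakly in $L^2(S;H^1(\OmegaIdx{f})^d)$ to the same limit $\mathbb{F}_\e(\psi)$, so the whole sequence converges, $\mathbb{F}_\e(\psi_k) \rightharpoonup \mathbb{F}_\e(\psi)$ in $L^2(S; H^1(\OmegaIdx{f}))$, which is the claim.
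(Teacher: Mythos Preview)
Your proposal is correct and follows essentially the same strategy as the paper: extract weakly convergent subsequences of $(u_k,p_k)$ from the uniform bounds of \cref{lemma:existence_bounds_stokes}, use the continuity and boundedness of $\mu$ together with $\psi_k\to\psi$ in $L^2$ to handle the nonlinear viscosity term, pass to the limit in \cref{eq:weak_formulation_stokes}, and invoke uniqueness to upgrade to convergence of the full sequence. The only cosmetic difference is that the paper argues $\mu(\psi_k)\nabla u_k\rightharpoonup\mu(\psi)\nabla u$ first weakly in $L^1$ and then upgrades to $L^2$ via boundedness, whereas you shift $\mu(\psi_k)$ onto the fixed test function $\nabla v$ and use strong-$L^2$ times weak-$L^2$ directly; both are standard and equivalent here.
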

\begin{proof}
    For each $\psi_k$, there exists a unique solution $(u_k, p_k) \in V_\e \times Q_\e$ with $\|u_k\|_{L^\infty(S; H^1(\OmegaIdx{f}))} + \|p_k\|_{L^\infty(S; L^2(\OmegaIdx{f})} \leq C_\e$, where $C_\e$ denotes a constant that may depend on $\e$.
    Given this bound, we can find a $u \in L^2(S; H^1(\OmegaIdx{f})^d)$ and a $p \in L^2(S\times \OmegaIdx{f})$ such that there is a subsequence of $(u_k, p_k)$, still denoted with $k$, that weakly converges:
    \begin{equation*}
        u_k \rightharpoonup u \quad \text{in } L^2(S; H^1(\OmegaIdx{f})^d) 
        \quad \text{ and } \quad 
        p_k \rightharpoonup p \quad \text{in } L^2(S; \OmegaIdx{f}).
    \end{equation*}
    Given the Lipschitz continuity and bounds of $\mu$, it holds $\mu(\psi_k) \to \mu(\psi)$ in $L^2(S \times \Omega)$. Therefore, we obtain the weak convergence 
    \begin{equation*}
        \mu(\psi_k) \nabla u_k \rightharpoonup \mu(\psi) \nabla u \quad \text{in } L^1(S \times \OmegaIdx{f})^d. 
    \end{equation*}    
    Since the viscosity is in $L^\infty(S \times \OmegaIdx{f})$, we have that $\mu(\psi_k) \nabla u_k$ and $\mu(\psi) \nabla u$ are bounded in $L^2(S \times \OmegaIdx{f})^d$. This lets us conclude
    \begin{equation*}
        \mu(\psi_k) \nabla u_k \rightharpoonup \mu(\psi) \nabla u \quad \text{in } L^2(S \times \OmegaIdx{f})^d, 
    \end{equation*}
    for a subsequence. We are now able to pass to the limit in the weak formulation and obtain 
    \begin{align*}
        0 &= \lim_{k\to \infty} \left[(\mu(\psi_k) \nabla u_k , \nabla v)_{S \times \OmegaIdx{f}} 
        - (p_k, \dive{(v)})_{S \times \OmegaIdx{f}} 
       \right]
        = (\mu(\psi) \nabla u , \nabla v)_{S \times \OmegaIdx{f}} 
        - (p, \dive{(v)})_{S \times \OmegaIdx{f}} ,
    \end{align*}
    for all $v \in H^1_0(\OmegaIdx{f})^d$.
    By localization in time, we see that, for almost all $t\in S$, $(u, p)$ is the solution to the Stokes problem (\ref{eq:weak_formulation_stokes}) in $V_\e \times Q_\e$ corresponding to $\psi$. Because the solution is unique, the whole sequence weakly converges to the same limit.
\end{proof}
\begin{lemma}\label{lem:temp_continuous}
    Let Assumptions \ref{item:A1} and \ref{item:A3}--\ref{item:A5} be satisfied.
    Given a sequence $\{u_{k}\}_{k \in \N} \subset V_\e$ and $u \in V_\e$ such that $\sup_{k\in\N}\|u_k\|_{V_\e}<\infty$ and $u_k \xrightharpoonup[]{k \to \infty} u$ in $L^2(S; H^1(\OmegaIdx{f}))$, it holds
    \begin{equation*}
        \mathbb{T}_\e(u_k) \xrightarrow[]{k\to \infty}\mathbb{T}_\e(u) \quad \text{ in } \quad L^2(S \times \Omega).
    \end{equation*}
    %
\end{lemma}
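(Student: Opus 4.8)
\emph{Proof plan.} Set $\theta_k := \mathbb{T}_\e(u_k)$ and $\theta := \mathbb{T}_\e(u)$ and keep $\e>0$ fixed, so that $\e$-dependent constants are harmless. Since $\{u_k\}$ converges weakly in $L^2(S;H^1(\OmegaIdx{f}))$ it is bounded there, and because $u_k \in V_\e$, \cref{lem:temperature_existence_bounds} yields a $k$-uniform bound for $\theta_k$ in $L^\infty(S;L^2(\OmegaIdx{s})) \cap L^2(S;H^1(\OmegaIdx{s}))$, for $\theta_k$ in $L^\infty(S;L^2(\OmegaIdx{f})) \cap L^2(S;H^1(\OmegaIdx{f}))$, and for $\jump{\theta_k}$ in $L^2(S \times \Gammafs)$. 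For the time derivatives I would not invoke \cref{eq:time_derivative_estimate} (whose hypothesis need not hold for a general $u_k \in V_\e$) but argue directly from \cref{eq:weak_formulation_temp}: testing with $\varphi = (\varphi^s, 0)$ removes the convective term and, with \cref{lem:trace_estimate}, gives $\|\partial_t \theta_k\|_{L^2(S;H^{-1}(\OmegaIdx{s}))} \le C$; testing with $\varphi = (0, \varphi^f)$ and estimating the convective contribution by $\tfrac{1}{\e}\|u_k(t)\|_{L^4(\OmegaIdx{f})}\|\nabla\theta_k(t)\|_{L^2(\OmegaIdx{f})}\|\varphi^f\|_{L^4(\OmegaIdx{f})}$ via the embeddings of \cref{lem:embedding_constant}, the Cauchy--Schwarz inequality in time bounds $\int_S \|u_k\|_{H^1(\OmegaIdx{f})} \|\nabla\theta_k\|_{L^2(\OmegaIdx{f})} \di{t}$ by $\|u_k\|_{L^2(S;H^1(\OmegaIdx{f}))}\|\nabla\theta_k\|_{L^2(S\times\OmegaIdx{f})}$; hence $\|\partial_t\theta_k\|_{L^1(S;H^{-1}(\OmegaIdx{f}))} \le C$ uniformly in $k$ (only this weaker integrability is at hand, which is still enough below).

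By the Aubin--Lions--Simon lemma --- using that $H^1(\OmegaIdx{k}) \hookrightarrow L^2(\OmegaIdx{k})$ is compact for the fixed Lipschitz domains $\OmegaIdx{k}$, $k \in \{f,s\}$, together with the time-derivative bounds in $L^1(S;H^{-1}(\OmegaIdx{f}))$ resp. $L^2(S;H^{-1}(\OmegaIdx{s}))$ --- I extract a subsequence (not relabelled) with $\theta_k \to \theta$ strongly in $L^2(S \times \Omega)$, $\theta_k \rightharpoonup \theta$ weakly in $L^2(S;H^1(\OmegaIdx{s}))$ and in $L^2(S;H^1(\OmegaIdx{f}))$, $\partial_t\theta_k \rightharpoonup \partial_t\theta$ weakly in the corresponding dual spaces, and (by continuity of the trace operators) $\theta_k \rightharpoonup \theta$ weakly in $L^2(S \times \Gammafs)$ and in $L^2(S; L^2(\Sigmaf))$. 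Each $\theta_k$ lies in $C(\overline S; L^2(\OmegaIdx{k}))$ with $\theta_k(0) = \theta_{\e,0}$ (being an element of $\Theta_\e$), and the usual passage to the limit gives $\theta(0) = \theta_{\e,0}$.

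It remains to pass to the limit in \cref{eq:weak_formulation_temp} with $v_\e = u_k$. The diffusion terms, the Robin term and the time-derivative terms converge by the weak convergences just listed, and $(f_\e, \varphi^s)_{\Gammafs}$ does not depend on $k$; the only delicate term is the convective one, a product of $u_k$ and $\nabla\theta_k$, both of which converge only weakly. Here I would use the identity \cref{eq:convection_indentity} with $\psi = \theta_k$ together with $\dive u_k = 0$ and the boundary data ($u_k = 0$ on $\Gammafs$, $u_k \cdot n_\e = 0$ on $\Sigma$, $\varphi = 0$ on $\SigmaInEps$) to rewrite $(u_k \cdot \nabla\theta_k, \varphi)_{\OmegaIdx{f}} = -(u_k \cdot \nabla\varphi, \theta_k)_{\OmegaIdx{f}} + (\uEpsBC \cdot n_\e, \theta_k \varphi)_{\SigmaOutEps}$, which shifts the derivative onto the fixed test function. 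It suffices to verify the limiting identity for $\varphi$ with bounded spatial gradient and bounded trace, a dense class: then $\theta_k \nabla\varphi \to \theta \nabla\varphi$ strongly in $L^2(S \times \OmegaIdx{f})$ while $u_k \rightharpoonup u$ weakly there, so the volume term passes to the limit, and the boundary term converges because multiplication by $\varphi$ preserves the weak convergence $\theta_k \rightharpoonup \theta$ in $L^2(S \times \SigmaOutEps)$, tested against the fixed weight $\uEpsBC \cdot n_\e$. Thus $\theta \in \Theta_\e$ solves \cref{eq:weak_formulation_temp} with velocity $u$ and has initial value $\theta_{\e,0}$, so $\theta = \mathbb{T}_\e(u)$ by the uniqueness part of \cref{lem:temperature_existence_bounds}. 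Since the limit is uniquely identified, a routine subsequence argument upgrades the convergence to the full sequence, giving $\mathbb{T}_\e(u_k) \to \mathbb{T}_\e(u)$ in $L^2(S \times \Omega)$.

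I expect the convective term to be the main obstacle: since neither $u_k$ nor $\nabla\theta_k$ converges strongly, the passage to the limit relies on the skew-symmetric reformulation \cref{eq:convection_indentity} (to move the gradient onto the test function) combined with the Aubin--Lions compactness of $\{\theta_k\}$ in $L^2(S \times \OmegaIdx{f})$; a secondary nuisance is that, for a general bounded sequence $\{u_k\} \subset L^2(S;H^1(\OmegaIdx{f}))$, only an $L^1(S;H^{-1}(\OmegaIdx{f}))$-bound on $\partial_t\theta_k$ is available, so one must use the Simon version of the compactness lemma rather than the classical Aubin--Lions statement.
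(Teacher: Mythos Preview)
Your argument follows the same overall strategy as the paper's proof: uniform energy bounds from \cref{lem:temperature_existence_bounds}, Aubin--Lions compactness to upgrade to strong $L^2$-convergence of $\theta_k$, integration by parts to recast the convective term as $-(u_k\theta_k,\nabla\varphi)_{\OmegaIdx{f}}+(\uEpsBC\cdot n_\e\,\theta_k,\varphi)_{\partial\OmegaIdx{f}}$, then passage to the limit and identification of the limit via uniqueness.

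The one noteworthy difference is your treatment of the time regularity. The paper simply cites \cref{lem:temperature_existence_bounds} to obtain a uniform $\Theta_\e$-bound (hence $\partial_t\theta_k^f$ bounded in $L^2(S;H^{-1}(\OmegaIdx{f}))$) and, when bounding $\|u_k\theta_k\|_{L^2(S\times\OmegaIdx{f})}$, tacitly uses $\|u_k\|_{L^\infty(S;H^1(\OmegaIdx{f}))}\le C_\e$. Both of these rely on a uniform $L^\infty(S;H^1)$-bound on $u_k$, which is indeed available in the only place the lemma is applied (the sequence $u_k=\mathbb{F}_\e(\psi_k)$, by \cref{lemma:existence_bounds_stokes}) but is not guaranteed by the stated hypothesis of weak convergence in $L^2(S;H^1(\OmegaIdx{f}))$. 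Your route---deriving only an $L^1(S;H^{-1}(\OmegaIdx{f}))$ bound on $\partial_t\theta_k^f$, invoking the Simon version of the compactness lemma, and then pairing the \emph{strong} $L^2$-convergence of $\theta_k$ against the weak convergence of $u_k$ for smooth $\varphi$---cleanly avoids that extra assumption and proves the lemma exactly as stated. This is a genuine (if minor) refinement of the paper's argument.
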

\begin{proof}
    Given the assumptions, the sequences $\{\theta_k\}_{k \in \N}\coloneqq\{\mathbb{T}_\e(u_k) \}_{k \in \N} \subset L^2(S;\Theta_\e)$ and $ \{\partial_t \theta_k\}_{k \in \N} \subset L^2(S;\Theta_\e')$ are bounded uniformly in $k\in\N$ (see \cref{lem:temperature_existence_bounds}).
    Therefore, there is a subsequence, still indexed with $k$, and a weak limit $\theta \in L^2(S;\Theta_\e)$ with $\partial_t\theta \in L^2(S;\Theta_\e')$ such that $\theta_k \rightharpoonup \theta$ in $ L^2(S;\Theta_\e)$ and $\partial_t\theta_k \rightharpoonup \partial_t\theta$ in $L^2(S;\Theta_\e')$.

    Note that the embeddings $H^1(\OmegaIdx{j}) \hookrightarrow L^2(\OmegaIdx{j})$ are compact and the embeddings $L^2(\OmegaIdx{j}) \hookrightarrow H^{1}(\OmegaIdx{j})'$ are continuous for $j=f,g$.
    Therefore, we obtain strong convergence $\theta_k \to \theta$ in $L^2(S \times \Omega)$ by the Aubin-Lions lemma. 

    It remains to be shown that $\theta = \mathbb{T}_\e(u)$.
    Looking at the weak form \cref{eq:weak_formulation_temp}, we can immediately pass to the limit in most terms with only the convection term as a potential issue.
    Now, since $\theta_k\to\theta$ strongly in $L^2(S\times\Omega_\e^f)$ and $u_k\rightharpoonup u$ weakly in $L^2(S\times\Omega_\e^f)^d$, we have $u_k\theta_k\rightharpoonup u\theta$ weakly in $L^1(S\times\Omega_\e^f)^d$.   
   Via the embedding $H^1(\OmegaIdx{f}) \hookrightarrow L^4(\OmegaIdx{f})$ and $\sup_{k\in\N}\|u_k\|_{V_\e}<\infty$, we also have
    \begin{equation*}
        \|u_k \theta_k\|_{L^2(S \times \OmegaIdx{f})}
        \leq 
        C_\e \|u_k\|_{L^\infty(S; H^1(\OmegaIdx{f}))}\|\theta_k\|_{L^2(S; H^1(\OmegaIdx{f}))} \leq C_\e
    \end{equation*}
    where $C_\e>0$ may depend on $\e$ but not on $k$.
    This implies that, at least along a subsequence,  $u_k\theta_k$ also converges weakly in $L^2(S\times\OmegaIdx{f})^d$ which then implies $u_k\theta_k\rightharpoonup u\theta$ in $L^2(S\times\OmegaIdx{f})^d$.

    This allows us to pass to the limit $k \to \infty$ in the weak formulation and deduce that $\theta$ is the solution corresponding to the velocity field $u$, i.e., $\theta=\mathbb{T}_\e(u)$.
    Finally, since the solution to the temperature problem is unique, the whole sequence converges. 
\end{proof}
Combining the previous two lemmas, we obtain the continuity of the concatenated operator $\mathbb{F}_\e$.
\begin{corollary}\label{corrollary_continuity}
    The operator $\mathbb{F}_\e$ defined in \cref{eq:solution_operator} is continuous from $L^2(S\times\Omega)$ to $L^2(S\times\Omega)$.
\end{corollary}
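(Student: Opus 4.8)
The statement is an immediate consequence of the two preceding lemmas, so the plan is essentially a bookkeeping argument. Since $L^2(S\times\Omega)$ is a metric space, continuity of $\mathbb{S}_\e$ is equivalent to sequential continuity, so it suffices to fix an arbitrary sequence $\{\psi_k\}_{k\in\N}\subset L^2(S\times\Omega)$ with $\psi_k\to\psi$ in $L^2(S\times\Omega)$ and show $\mathbb{S}_\e(\psi_k)\to\mathbb{S}_\e(\psi)$ in $L^2(S\times\Omega)$.

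First I would apply \cref{lem:stokes_continuous} to this sequence: from $\psi_k\to\psi$ strongly in $L^2(S\times\Omega)$ we obtain that the velocities $u_k:=\mathbb{F}_\e(\psi_k)$ converge weakly to $u:=\mathbb{F}_\e(\psi)$ in $L^2(S;H^1(\OmegaIdx{f}))$. Note that $u\in V_\e$ because it is, by definition of $\mathbb{F}_\e$, the velocity component of the (unique) Stokes solution associated with $\psi$, and likewise $u_k\in V_\e$ for every $k$. Hence the hypotheses of \cref{lem:temp_continuous} are met for the sequence $\{u_k\}$, and that lemma yields $\mathbb{T}_\e(u_k)\to\mathbb{T}_\e(u)$ strongly in $L^2(S\times\Omega)$. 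Since $\mathbb{S}_\e(\psi_k)=\mathbb{T}_\e\big(\mathbb{F}_\e(\psi_k)\big)=\mathbb{T}_\e(u_k)$ and $\mathbb{S}_\e(\psi)=\mathbb{T}_\e(u)$, this is precisely the desired convergence, which completes the argument.

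There is no genuine obstacle here; the only point worth emphasizing is the mismatch of topologies along the way. The Stokes operator $\mathbb{F}_\e$ delivers only weak $H^1$-convergence of the velocities, not strong convergence, so one cannot conclude continuity of $\mathbb{S}_\e$ by naively chaining a strong-to-strong with a weak-to-weak map. What makes the composition work is the \emph{compactifying} behaviour of $\mathbb{T}_\e$ established in \cref{lem:temp_continuous} — via the uniform bounds of \cref{lem:temperature_existence_bounds} and the Aubin--Lions lemma — which upgrades weak convergence of the input velocity to strong $L^2$-convergence of the output temperature. This same property is exactly what will later make $\mathbb{S}_\e$ amenable to a Schauder fixed point argument.
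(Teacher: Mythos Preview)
Your argument is correct and matches the paper's approach exactly: the corollary is stated there as an immediate consequence of combining \cref{lem:stokes_continuous} and \cref{lem:temp_continuous}, and your sequential-continuity bookkeeping spells out precisely that combination. The remark about $\mathbb{T}_\e$ compensating for the merely weak convergence delivered by $\mathbb{F}_\e$ is a nice clarification of why the composition works.
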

With the continuity, we can now tackle the existence of a fixed point. 
\begin{theorem}[Existence]\label{thm:solution_operator_exitsence}
    Let Assumptions \ref{item:A1}-\ref{item:A5} be satisfied. Then, for all $\e > 0$, the operator $\mathbb{F}_\e$ defined in \cref{eq:solution_operator} has a fixed point in $L^2(S;\Theta_{\e})$.
\end{theorem}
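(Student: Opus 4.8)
The plan is to apply Schauder's fixed point theorem to the map $\mathbb{S}_\e$ on a suitable convex, compact subset of $L^2(S\times\Omega)$. Continuity of $\mathbb{S}_\e$ from $L^2(S\times\Omega)$ into itself is already available (it is exactly the content of the corollary following \cref{lem:stokes_continuous} and \cref{lem:temp_continuous}), so the only thing left to produce is a nonempty, convex, closed set $C\subset L^2(S\times\Omega)$ that is compact in $L^2(S\times\Omega)$ and satisfies $\mathbb{S}_\e(C)\subset C$.

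First I would exploit that the a priori estimates for the two subproblems are uniform in the input. For an arbitrary $\psi\in L^2(S\times\Omega)$, \cref{lemma:existence_bounds_stokes} gives that $u:=\mathbb{F}_\e(\psi)$ satisfies \cref{eq:solution_esitmate_stokes}; in particular $\|u\|_{L^2(\OmegaIdx{f})}+\e\|\nabla u\|_{L^2(\OmegaIdx{f})}\le C\sqrt{\e}$ with $C$ independent of $\psi$ (it may depend on $\e$, which is fine here). This is precisely the extra hypothesis required in \cref{lem:temperature_existence_bounds}, so $\theta:=\mathbb{T}_\e(u)=\mathbb{S}_\e(\psi)$ obeys both \cref{eq:solution_esitmate} and \cref{eq:time_derivative_estimate}, again with a constant not depending on $\psi$. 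Hence there is $R=R(\e)<\infty$ such that the whole image $\mathbb{S}_\e\big(L^2(S\times\Omega)\big)$ is contained in
\[ B_R:=\Big\{\varphi\in\Theta_\e:\ \|\varphi\|_{L^2(S;H^1(\OmegaIdx{f}\cup\OmegaIdx{s}))}+\|\partial_t\varphi\|_{L^2(S;H^{-1}(\OmegaIdx{f}))}+\|\partial_t\varphi\|_{L^2(S;H^{-1}(\OmegaIdx{s}))}\le R\Big\}. \]

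Next, $B_R$ is a bounded (and convex, being the intersection of a norm ball with the linear space $\Theta_\e$) subset of $\Theta_\e$, and since $H^1(\OmegaIdx{k})\hookrightarrow\hookrightarrow L^2(\OmegaIdx{k})\hookrightarrow H^{-1}(\OmegaIdx{k})$ for $k=f,s$, the Aubin--Lions lemma — used exactly as in the proof of \cref{lem:temp_continuous} — shows $B_R$ is relatively compact in $L^2(S\times\Omega)$. I would then set $C:=\overline{B_R}$, the closure of $B_R$ in $L^2(S\times\Omega)$: it is nonempty ($0\in B_R$), closed, convex (the closure of a convex set), and compact in $L^2(S\times\Omega)$, and it satisfies $\mathbb{S}_\e(C)\subset\mathbb{S}_\e\big(L^2(S\times\Omega)\big)\subset B_R\subset C$. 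Schauder's fixed point theorem applied to the continuous self-map $\mathbb{S}_\e\colon C\to C$ then yields $\theta_\e\in C$ with $\mathbb{S}_\e(\theta_\e)=\theta_\e$; since $\theta_\e=\mathbb{S}_\e(\theta_\e)\in B_R\subset\Theta_\e$, the fixed point lies in $\Theta_\e$, which is the claim.

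The fixed-point machinery and the convexity/closedness bookkeeping are routine; the step that actually carries the argument — and the one I would state carefully — is the observation that the velocity $\mathbb{F}_\e(\psi)$ automatically meets the smallness condition $\|v_\e\|_{L^2(\OmegaIdx{f})}+\e\|\nabla v_\e\|_{L^2(\OmegaIdx{f})}\le C\sqrt{\e}$ needed for the $\Theta_\e$-bound on $\theta_\e$ in \cref{lem:temperature_existence_bounds}, so that $\mathbb{S}_\e$ genuinely maps all of $L^2(S\times\Omega)$ into one fixed bounded subset of $\Theta_\e$ and hence into a compact subset of $L^2(S\times\Omega)$. Note that, unlike the homogenization estimates of the next section, no $\e$-uniformity is required here, so $R$ is allowed to depend on $\e$.
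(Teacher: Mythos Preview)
Your proposal is correct and follows essentially the same Schauder-on-$L^2(S\times\Omega)$ strategy as the paper; you have simply made explicit the bounded set (the paper calls it $\Theta_{\e,\text{bounded}}$ without ever defining it), the Aubin--Lions compactness, and the self-mapping verification that the paper compresses into two sentences. In particular, your observation that the Stokes bound \cref{eq:solution_esitmate_stokes} automatically supplies the hypothesis needed for the time-derivative estimate in \cref{lem:temperature_existence_bounds}, so that $\mathbb{S}_\e$ maps all of $L^2(S\times\Omega)$ into a single bounded $\Theta_\e$-ball, is exactly the point the paper takes for granted.
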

\begin{proof}
    By \cref{lem:temperature_existence_bounds} and \ref{lemma:time_derivative_temp} there exists a $\e$-dependent $C_\e > 0$ such that 
    \begin{equation*}
        \|\mathbb{F}_\e(\psi) \|_{L^2(S; H^1(\OmegaIdx{s} \cup \OmegaIdx{f}))} + \|\partial_t (\mathbb{F}_\e(\psi))\|_{L^2(S; \Theta_\e')} \leq C_\e, \quad \text{ for all } \psi \in L^2(S\times\Omega).
    \end{equation*}
    Define the set
    \begin{equation*}
        M \coloneqq \left\{\psi \in L^2(S; \Theta_\e) : \partial_t \psi \in L^2(S; \Theta_\e'), 
        \|\psi\|_{L^2(S; H^1(\OmegaIdx{s} \cup \OmegaIdx{f}))} + \|\partial_t \psi\|_{L^2(S; \Theta_\e')} \leq C_\e\right\}.
    \end{equation*}
    By construction, $M$ is closed, bounded, convex, and nonempty. Furthermore, by \cref{lem:temperature_existence_bounds} and \ref{lemma:existence_bounds_stokes} it holds $\mathbb{F}_\e : M \to M$.
    We know that $\mathbb{F}_\e$ is $L^2(S\times\Omega)$-continuous (\cref{corrollary_continuity}).
    As in the proof of \cref{lem:temp_continuous}, we can use the Aubin-Lions lemma to conclude that $M$, and also $\mathbb{F}_\e(M)$, are compact in $L^2(S \times \Omega)$.
    Therefore, we apply Schauder's fixed point theorem \cite[Theorem 1.C]{zeidler1999applied} and obtain the existence of at least one fixed point of the operator $\mathbb{F}_\e$ in $M$, and subsequently in $L^2(S;\Theta_{\e})$.
\end{proof}
%
The previous theorem ensures that, under the given assumptions, there exists a weak solution $(\theta_\e, u_\e, p_\e)$ of the problem (\ref{eq:epsilon-problem}) for all $\e > 0$. By construction of the operator $\mathbb{F}_\e$ the fixed point also fulfills the estimates of \cref{lem:temperature_existence_bounds} and \cref{lemma:existence_bounds_stokes}. To ensure uniqueness, further assumptions on the regularity of the solution are required, as denoted in the following theorem.
\begin{theorem}[Uniqueness]\label{thm:eps_problem_uniqueness}
    Let Assumptions \ref{item:A1}--\ref{item:A5} be satisfied.
    If there is a solution $(\theta_\e,u_\e,p_\e)\in L^2(S;\Theta_\e)\times V_\e\times Q_\e$ of the coupled system \cref{eq:weak_formulation_temp} and (\ref{eq:weak_formulation_stokes}), such that
    \begin{itemize}
        \item if $d=2:$ $\nabla \theta_\e\in L^{\infty}(S;L^{1+\delta}(\Omega_\e^f))^2$ and $\nabla u_\e\in L^{\infty}(S;L^{2+\delta}(\Omega_\e^f))^{2\times 2}$,
        \item if $d=3:$ $\nabla \theta_\e\in L^{\infty}(S;L^{\nicefrac{3}{2} + \delta }(\Omega_\e^f))^3$ and $\nabla u_\e\in L^{\infty}(S;L^{3 + \delta}(\Omega_\e^f))^{3\times 3}$,
    \end{itemize}
    for arbitrary $\delta > 0$.
    Then, this is the only solution to the problem.
\end{theorem}
\begin{proof}
We provide the proof in the case $d=3$, the other case follows by the same steps but with a better estimate in the Gagliardo-Nirenberg inequality.
Suppose there are two sets of solutions $(\theta_\e^i,u_\e^i,p_\e^i)\in L^2(S; \Theta_\e) \times V_\e\times Q_\e$ ($i=1,2$) with their differences denoted by $(\overline{\theta_\e},\overline{u_\e},\overline{p_\e})$.
Furthermore, let $(\theta_\e^1,u_\e^1)$ satisfy the additional gradient regularity.
Using the weak formulations, it holds
\begin{linenomath*}
    \begin{multline*}
        \langle\partial_t \overline{\theta_\e}, \varphi\rangle_{H^1(\OmegaIdx{s})'} 
        + (\kappa^s\nabla\overline{\theta_\e}, \nabla\varphi)_{ \OmegaIdx{s}} 
        + \frac{1}{\varepsilon}\langle \partial_t \overline{\theta_\e}, \varphi\rangle_{H^1_{\SigmaInEps}(\OmegaIdx{f})'} 
        + \frac{1}{\varepsilon} (\kappa^f\nabla\overline{\theta_\e}, \nabla\varphi)_{ \OmegaIdx{s}} 
        \\
        +\frac{1}{\e} ( \overline{u_\e} \cdot \nabla{\theta_\e}^1, \varphi)_{ \OmegaIdx{f}}
        +\frac{1}{\e} ( u_\e^2 \cdot \nabla\overline{\theta_\e}, \varphi)_{ \OmegaIdx{f}}
        + \alphafg (\jump{\overline{\theta_\varepsilon}}, \jump{\varphi})_{ \Gammafs} 
        =0,
    \end{multline*}
\end{linenomath*}
\begin{linenomath*}\begin{equation*}
    \begin{split}
        &(\mu(\theta_\e^2) \nabla \overline{u_\e} , \nabla v)_{ \OmegaIdx{f}}+((\mu(\theta_\e^2)-\mu(\theta_\e^1))) \nabla u_\e^1 , \nabla v)_{ \OmegaIdx{f}} - (\overline{p_\e}, \dive{v})_{ \OmegaIdx{f}} = 0.
    \end{split}
\end{equation*}\end{linenomath*}
for all test functions $(\varphi,v)\in L^2(S;\Theta_\e)\times H^1_0(\OmegaIdx{f})^d$ and almost all $t\in S$.
Now, choosing $(\varphi,v)=(\overline{\theta_\e},\overline{u_\e})$ as test functions, we obtain
\begin{linenomath*}
    \begin{multline*}
        \frac{1}{2}\ddt\|\overline{\theta_\e}\|_{L^2(\Omega_\e^s)}^2
        + \kappa_g\|\nabla\overline{\theta_\e}\|_{L^2(\Omega_\e^s)}^2
        +\frac{1}{2\varepsilon}\ddt\|\overline{\theta_\e}\|_{L^2(\Omega_\e^f)}^2
        + \frac{\kappa_f}{\varepsilon}\|\nabla\overline{\theta_\e}\|_{L^2(\Omega_\e^f)}^2
        + \alphafg \|\overline{\theta_\varepsilon}\|_{L^2(\Gammafs)}^2
        +\frac{1}{\e} ( u_\e^2 \cdot \nabla\overline{\theta_\e}, \overline{\theta_\e})_{ \OmegaIdx{f}}
        \\
        =-\frac{1}{\e} (\overline{u_\e} \cdot \nabla\theta_\e^1, \overline{\theta_\e})_{ \OmegaIdx{f}},
    \end{multline*}
\end{linenomath*}
\begin{linenomath*}\begin{equation*}
    \begin{split}
        &\underline{\mu}\|\nabla \overline{u_\e}\|_{L^2(\Omega_\e^f)}^2=
        ((\mu({\theta_\e}^1)-\mu({\theta_\e}^2)) \nabla u_\e^1 , \nabla\overline{u_\e})_{ \OmegaIdx{f}}.
    \end{split}
\end{equation*}\end{linenomath*}
Take $r = \nicefrac{2(3+\delta)}{1+\delta}$, then $1=\nicefrac{1}{2} + \nicefrac{1}{r} + \nicefrac{1}{3+\delta}$. 
Using the Lipschitz property of the viscosity and the Hölder and the Poincaré inequalities, we are led to
\begin{linenomath*}
\begin{equation}\label{eq:unique_estimate_velocity}
    \|\overline{u_\e}\|_{H^1(\Omega_\e^f)}^2
    \leq C\|\overline{\theta_\e}\|_{L^{r}(\Omega_\e^f)}^2\|\nabla u_\e^1\|_{{L^{3+\delta}}(\Omega_\e^f)}^2.
\end{equation}
\end{linenomath*}
For the heat problem, we first note that
\[
(u_\e^2 \cdot \nabla\overline{\theta_\e}, \overline{\theta_\e})_{ \OmegaIdx{f}} \geq 0
\]
via \cref{eq:convection_indentity}, resulting in (for $q = \nicefrac{18+12\delta}{3+10\delta}$)
\begin{linenomath*}
    \begin{equation}
        \begin{split}\label{eq:unique_estimate_heat}
        \ddt\|\overline{\theta_\e}\|_{L^2(\Omega_\e^s)}^2
        +\|\nabla\overline{\theta_\e}\|_{L^2(\Omega_\e^s)}^2
        +\ddt\|\overline{\theta_\e}\|_{L^2(\Omega_\e^f)}^2
        +\|\nabla\overline{\theta_\e}\|_{L^2(\Omega_\e^f)}^2
        +\|\jump{\overline{\theta_\varepsilon}}\|_{L^2(\Gammafs)}^2
        \\
        \leq C_\e\|\overline{u_\e}\|_{L^6(\Omega_\e^f)}\|\overline{\theta_\e}\|_{L^{q}(\Omega_\e^f)}\|\nabla{\theta_\e}^1\|_{{L^{\nicefrac{3}{2}+ \delta }}(\Omega_\e^f)}.
        \end{split}
    \end{equation}  
\end{linenomath*}
Summing \cref{eq:unique_estimate_velocity,eq:unique_estimate_heat} and integrating over time, we get
\begin{linenomath*}
    \begin{equation}\label{eq:inbetween_unique}
        \begin{split}
            \|\overline{\theta_\e}(t)\|_{L^2(\Omega_\e^s)}^2
            +\|\overline{\theta_\e}(t)\|_{L^2(\Omega_\e^f)}^2
            +\int_0^t\left(\|\nabla\overline{\theta_\e}\|_{L^2(\Omega_\e^s)}^2
            +\|\nabla\overline{\theta_\e}\|_{L^2(\Omega_\e^f)}^2
            +\|\jump{\overline{\theta_\varepsilon}}\|_{L^2(\Gammafs)}^2
            + \|\overline{u_\e}\|_{H^1(\Omega_\e^f)}^2\right)\di{\tau}\\
            \leq C_\e\int_0^t\left(\|\overline{u_\e}\|_{L^{6}(\Omega_\e^f)}\|\overline{\theta_\e}\|_{L^{q}(\Omega_\e^f)}\|\nabla{\theta_\e}^1\|_{{L^{\nicefrac{3}{2}+\delta}}(\Omega_\e^f)}
            +\|\overline{\theta_\e}\|_{{L^{r}}(\Omega_\e^f)}^2\|\nabla u_\e^1\|_{{L^{3+\delta}}(\Omega_\e^f)}^2\right)\di{\tau}.
        \end{split}
    \end{equation}
    Focusing on the second term on the right-hand side of \Cref{eq:inbetween_unique}, we have
    \[
    \int_0^t\|\overline{\theta_\e}\|_{{L^{r}}(\Omega_\e^f)}^2\|\nabla u_\e^1\|_{{L^{3+\delta}}(\Omega_\e^f)}^2\di{\tau}
    \leq \|\nabla u_\e^1\|_{L^{\infty}(S;{L^{3+\delta}}(\Omega_\e^f))}^2\|\overline{\theta_\e}\|_{L^{2}(S;{L^{r}}(\Omega_\e^f))}^2.
    \]
    The Gagliardo-Nirenberg interpolation inequality with $\lambda = \nicefrac{3}{3+\delta}$, yields
    \begin{equation*}
        \|\overline{\theta_\e}\|_{L^{2}(S;{L^{r}}(\Omega_\e^f))} \leq C \|\nabla \overline{\theta_\e}\|^\lambda_{L^2(S \times \OmegaIdx{f})} \|\overline{\theta_\e}\|^{1-\lambda}_{L^2(S \times \OmegaIdx{f})}.
    \end{equation*}    
    Using $\nabla u_\e^1\in L^{\infty}(S;L^{3+\delta}(\Omega_\e^f))^d$ and the scaled Young's inequality leads to
    \[
    \int_0^t\|\overline{\theta_\e}\|_{{L^{r}}(\Omega_\e^f)}^2\|\nabla u_\e^1\|_{{L^{3+\delta}}(\Omega_\e^f)}^2\di{\tau}
    \leq  \frac{1}{4}\|\nabla\overline{\theta_\e}\|_{L^{2}(S\times\Omega_\e^f)}^{2}+C_\e\|\overline{\theta_\e}\|_{L^{2}(S\times\Omega_\e^f)}^{2}.
    \]
    For the first term on the right-hand side of \Cref{eq:inbetween_unique}, we use $\nabla \theta_\e^1\in L^{\infty}(S;{L^{\nicefrac{3}{2} + \delta}(\Omega_\e^f)})$ to estimate
    \[
    \int_0^t\|\overline{u_\e}\|_{L^{6}(\Omega_\e^f)}\|\overline{\theta_\e}\|_{{L^{q}}(\Omega_\e^f)}\|\nabla\theta_\e^1\|_{L^{\nicefrac{3}{2} + \delta }(\Omega_\e^f)}\di{\tau}
    \leq C_\e\|\overline{u_\e}\|_{L^2(S;H^1(\Omega_\e^f))}\|\overline{\theta_\e}\|_{L^2(S;L^6(\Omega_\e^f))}
    \]
    which (again using Young's inequality and the Gagliardo-Nirenberg interpolation) gives
    \[
    C_\e \int_0^t\|\overline{u_\e}\|_{L^{6}(\Omega_\e^f)}\|\overline{\theta_\e}\|_{{L^{q}}(\Omega_\e^f)}\|\nabla\theta_\e^1\|_{L^{\nicefrac{3}{2} + \delta }(\Omega_\e^f)}\di{\tau}
    \leq
    \frac{1}{2}\|\overline{u_\e}\|_{L^2(S;H^1(\Omega_\e^f))}^2
    +\frac{1}{4}\|\nabla\overline{\theta_\e}\|_{L^{2}(S\times\Omega_\e^f)}^{2}
    +C_\e\|\overline{\theta_\e}\|_{L^{2}(S\times\Omega_\e^f)}^{2}.
    \]
    Putting everything together and utilizing Gronwall's inequality shows $\overline{u_\e}=0$ and $\overline{\theta_\e}=0$.
    
\end{linenomath*}
\end{proof}
In particular, any classical solution must be unique via \cref{thm:eps_problem_uniqueness}.
We point to an analogous statement for the limit problem (\cref{thm:homogenized_model_uniqueness}).


\section{Homogenization}\label{sec:homogenization}
In this section, we investigate the limit behavior for $\e\to0$ of the microscopic solution $(\theta_\e,u_\e,p_\e)$ and identify the macroscopic limit problem which is solved by this limit (see~\cref{thm:homogenized_model}).
For this limit analysis, we utilize the concept of two-scale convergence for thin layers first introduced in
\cite[Definition 4.1]{Neuss07} and further developed in following articles, see for example \cite{Bhattacharya2022, Gahn17} and the references therein.
We start by introducing the definition, then consider the existence of limit functions, and then pass to the limit in the weak formulation. Lastly, we analyze the existence and uniqueness of solutions to the effective model.
\subsection{Two scale convergence}
\begin{definition}[Two-scale convergence on thin domains]\label{def:two_scale}\hspace{5pt}
\begin{itemize}
    \item[i)] A sequence $u_\varepsilon\in L^2(S\times \Omega_\e)$ is said to weakly two-scale converge to a function $u \in L^2(S\times \Sigma \times Y^{d})$ (notation $u_\varepsilon \twosc u$) if 
    \begin{linenomath*}\begin{equation}
        \lim_{\varepsilon\to 0} \frac{1}{\varepsilon}\int_S \int_{\Omega_\e} u_\varepsilon(t, x)\varphi\left(t, \Tilde{x}, \nicefrac{x}{\varepsilon}\right) \di{x}\di{t} = \int_S \int_{\Sigma} \int_{Y^d} u(t,\Tilde{x},y)\varphi(t,\Tilde{x},y) \di{y}\di{\Tilde{x}}\di{t},
    \end{equation}\end{linenomath*}
    for all $\varphi \in C(\overline{S \times \Sigma}; C_{\#}(Y^d))$. If in addition 
    \begin{equation*}
        \lim_{\e \to 0} \frac{1}{\sqrt{\e}} \|u_\e\|_{L^2(S\times \Omega_\e)} = \|u\|_{L^2(S\times \Sigma \times Y^d)} 
    \end{equation*}
    then the sequence is said to strongly two-scale converge and we write $u_\varepsilon \stwosc u$.
    \item[ii)] A sequence $u_\varepsilon\in L^2(S\times \Gammafs)$ is said to weakly two-scale converge to a function $u \in L^2(S\times \Sigma \times \Gamma)$ if 
    \begin{linenomath*}\begin{equation}
        \lim_{\varepsilon\to 0} \int_S \int_{\Gammafs} u_\varepsilon(t, x)\varphi\left(t, \Tilde{x}, \nicefrac{x}{\varepsilon}\right) \di{\sigma_x}\di{t} = \int_S \int_{\Sigma} \int_{\Gamma} u(t,\Tilde{x},y)\varphi(t,\Tilde{x},y) \di{\sigma_y}\di{\Tilde{x}}\di{t},
    \end{equation}\end{linenomath*}
    for all $\varphi \in C(\overline{S\times \Sigma};C_{\#}(\Gamma))$.
\end{itemize}
\end{definition}
%

We remark that \Cref{def:two_scale} is given for the thin layer $\Omega_\e = \Sigma \times (0, \e)$, since there the limit behavior is more straightforward to define. However, the definition can, with the extension operator from \Cref{lem:extension_operator}, be directly transferred to the rough thin layer $\OmegaIdx{f}$. A function $u_\e \in  L^2(S \times \OmegaIdx{f})$ is said to two-scale converge to a function $u \in L^2(S \times \Sigma \times Y^d)$ if
\begin{equation*}
        \lim_{\varepsilon\to 0} \frac{1}{\varepsilon}\int_S \int_{\OmegaIdx{f}} u_\varepsilon(t, x)\varphi\left(t, \Tilde{x}, \nicefrac{x}{\varepsilon}\right) \di{x}\di{t} = \int_S \int_{\Sigma} \int_{Z} u(t,\Tilde{x},y)\varphi(t,\Tilde{x},y) \di{y}\di{\Tilde{x}}\di{t},
\end{equation*}
for all admissible test functions $\varphi$. In this case we write $u_\e \twosc \chi_Z u$.

Given the assumptions on the data and the estimate derived in \cref{sec:analysis_micro_model} we obtain the following limit functions.
\begin{proposition}[Existence of limit functions]\label{pro:existence_limit}
    Under Assumptions \ref{item:A1}-\ref{item:A5}, weak solutions $(\theta_\e, u_\e, p_\e) \in L^2(S;\Theta_\e) \times V_\e \times Q_\e$ of \cref{eq:epsilon-problem} have convergent subsequences.
    More specifically, there are limit functions (without relabeling for the subsequences)
    \begin{itemize}
        \item[i)] $\theta^s \in L^2(S; H^1(\Omega))  \cap H^1(S; H^1(\Omega)')$ such that
        \begin{equation*}
           \theta_\e^s \to \theta^s \text{ in } L^2(S \times \Omega),
            \quad \partial_t \theta_\e^s \rightharpoonup \partial_t \theta^s \text{ in } L^2(S;H^1(\Omega)'),
            \quad 
            \nabla  \theta_\e^s \rightharpoonup \nabla \theta^s \text{ in } L^2(S \times \Omega)^d,
        \end{equation*}
        \item[ii)] $\theta^f \in L^2(S; H^1(\Sigma)) \cap H^1(S; H^1(\Sigma)')$ and $\theta_1^f \in L^2(S \times \Sigma; H^1_{\#}(Y^d))$ such that
        \begin{equation*}
            \thetaIdx{f} \stwosc \chi_{Z} \theta^f
            \quad \text{ and } \quad
            \nabla\thetaIdx{f} \twosc
            \chi_{Z} \left(
            \nabla_{\Tilde{x}} \theta^f + \nabla_y \theta_1^f
            \right).
        \end{equation*}
        Further, it holds
        \[
        \lim_{\e\to0}\frac{1}{\e}\int_S\langle\partial_t\theta_\e^f,\psi+\varphi_\e\rangle_{H^1_\#(\OmegaIdx{f})}\di{t}=\int_S\int_Z\langle\partial_t\theta^f,\psi+\varphi\rangle_{H^1_\#(\Sigma)}\di{y}\di{t}
        \]
        for all $\eta\in L^2(S;H^1_\#(\Sigma))$ and $\varphi_\e(t,x)=\varphi(t,\tilde{x},\nicefrac{x}{\e})$ with $\varphi\in C_0^\infty(S;C_\#^\infty(\Sigma;C_\#(Z))).$
        \item[iii)] $u \in L^2(S\times \Sigma; H^1_{\#}(Y^d))^d$ with 
        \begin{equation*}
            u_\e \twosc \chi_{Z}  u \quad \text{ and }
            \quad 
            \e \nabla u_\e \twosc \chi_{Z}  \nabla_y u.
        \end{equation*}
        It holds $\dive_y{u(t,\tildex, y)} = 0$ and $\dive_{\tildex}{\left(\int_Z u(t,\tildex, y)\di{y}\right)} = 0$ for almost all $(t,\tildex, y) \in S\times\Sigma \times Z$. Moreover, $u = u_\text{motion}$ on $S\times\Sigma\times\{y_d=0\}$ and $u = 0$ for $S\times\Sigma\times\Gamma$ almost everywhere. 
        \item[iv)] $p \in L^2(S \times \Sigma;L^2_{0,\#}(Z))$ with
        \begin{equation*}
            \e^2 \chi_{\OmegaIdx{f}}  p_\e \twosc \chi_{Z}  p.
        \end{equation*} 
    \end{itemize}
    Additionally, on the rough interface, it holds
    \begin{equation}\label{eq:limitInterface}
        \lim_{\e \to 0} \int_S \int_{\Gammafs} \alpha (\thetaIdx{s} - \thetaIdx{f}) \varphi(t,\tilde{x},\nicefrac{x}{\e}) \di{\sigma_x} \di{t} = \int_S \int_{\Sigma} \alpha |\Gamma| (\theta^s - \theta^f) \varphi(t,\tilde{x},y) \di{\Tilde{x}} \di{t}, 
    \end{equation}
    for all $\varphi\in C(\overline{S\times\Sigma};C_\#(\Gamma))$.
\end{proposition}
\begin{proof}
$i).$ $\theta_\e^s$ is uniformly bounded in $L^2(S;H^1(\OmegaIdx{s}))\cap H^1(S;H^1(\OmegaIdx{s})')$ as a result of \cref{lem:temperature_existence_bounds,lemma:time_derivative_temp}.
From here, these convergence results follow via \cite[Theorem 4]{Donato_2019}.

$ii).$ These convergence results are due to \cref{lem:temperature_existence_bounds,lemma:time_derivative_temp} together with the properties of two-scale convergence.
For $\thetaIdx{f} \twosc \chi_{Z} \theta^f$ and  $\nabla\thetaIdx{f} \twosc\chi_{Z} \left(\nabla_{\Tilde{x}} \theta^f + \nabla_y \theta_1^f\right)$,  we refer to \cite[Proposition 4.3]{Gahn17}; the same arguments can also be found in \cite{Bhattacharya2022, Freudenberg2024}.
For the convergence of the time derivative $\partial_t \thetaIdx{f}$ as well as the strong two-scale convergence of $\thetaIdx{f}$, we point to \cite[Proposition 5]{Gahn2024}.

$iii)$-$iv.)$ There are already several very similar results for fluid and pressure limits in thin layers available in the literature, see for example \cite{Fabricius2023, Gahn2022Stokes}.
Here, we rely on the estimates given via \cref{lemma:existence_bounds_stokes}.
The specific convergence results in a stationary setting can be found in \cite[Lemma 4.8]{Fabricius2023} and can be easily transferred to our quasi-stationary setup due to the uniform $L^2$-bounds.
The general proof idea for the divergence properties is the same as for porous media in non-thin domains \cite[Proposition 1.14]{Allaire92}.

Finally, the last integral identity over interfaces (see \cref{eq:limitInterface}) is also a common limit in similar setups, e.g., \cite{Bhattacharya2022,Neuss07,Gahn2024}.
We observe that $\theta_\e^f$ and $\theta_\e^s$ are both uniformly bounded in $L^2(S\times\Gamma_\e)$ via \cref{lem:temperature_existence_bounds} together with \cref{lem:trace_estimate}.
As a consequence, there are converging subsequences for which the limit \cref{eq:limitInterface} holds  (\cite[Theorem 4.4.$(ii)$]{Bhattacharya2022} and \cite[Lemma B.1, Corollary B.2]{Gahn2024}).

\end{proof}
Given the strong two-scale convergence and the properties of $\mu$ we obtain the following convergence of the viscosity function.
\begin{corollary}\label{corrollary:mu}
    Given the Assumption \ref{item:A2} it holds
    $\chi_{\OmegaIdx{f}}\mu(\thetaIdx{f}) \twosc \chi_Z\mu(\theta^f)$
    strongly in $L^2(S \times \OmegaIdx{f})$.
\end{corollary}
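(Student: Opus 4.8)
The plan is to combine the strong two-scale convergence of $\theta_\e^f$ established in \cref{pro:strong_twosc} with the continuity and boundedness of $\mu$ from Assumption \ref{item:A2}. First I would recall that, by \cref{pro:existence_limit} ii) and \cref{pro:strong_twosc}, the sequence $\chi_{\OmegaIdx{f}}\thetaIdx{f}$ converges strongly in the two-scale sense to $\chi_Z\theta^f$; by a standard characterization of strong two-scale convergence on thin domains (see e.g. \cite{Gahn17, Neuss07}), this is equivalent to $\frac{1}{\sqrt\e}\|\theta_\e^f - \theta^f(\cdot,\tilde x, x/\e)\|_{L^2(S\times\OmegaIdx{f})}\to 0$ after approximating $\theta^f$ by smooth functions, or can be unfolded to plain strong $L^2$ convergence of the unfolded sequence $\widehat{\theta_\e^f}$ on $S\times\Sigma\times Z$. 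I would work with the unfolding operator $\mathcal T_\e$ (periodic unfolding in the first $d-1$ variables, rescaling in the last), under which strong two-scale convergence becomes strong $L^2(S\times\Sigma\times Y^d)$ convergence of $\mathcal T_\e(\chi_{\OmegaIdx{f}}\theta_\e^f)$ to $\chi_Z\theta^f$.

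Next I would pass from convergence of $\theta_\e^f$ to convergence of $\mu(\theta_\e^f)$. Along the unfolded sequence, strong $L^2$ convergence implies (after extracting a further subsequence) pointwise a.e. convergence $\mathcal T_\e(\theta_\e^f)\to\theta^f$ on $S\times\Sigma\times Z$; since $\mu$ is continuous, $\mu(\mathcal T_\e(\theta_\e^f))\to\mu(\theta^f)$ pointwise a.e., and since $\underline\mu\le\mu\le\overline\mu$ is uniformly bounded, the dominated convergence theorem gives $\mu(\mathcal T_\e(\theta_\e^f))\to\mu(\theta^f)$ strongly in $L^2(S\times\Sigma\times Z)$ (and in every $L^p$, $p<\infty$). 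Because unfolding commutes with composition by the Nemytskii operator $v\mapsto\mu(v)$, i.e. $\mathcal T_\e(\mu(\theta_\e^f))=\mu(\mathcal T_\e(\theta_\e^f))$, this is exactly the statement that $\chi_{\OmegaIdx{f}}\mu(\theta_\e^f)$ converges strongly two-scale to $\chi_Z\mu(\theta^f)$. In particular the norm convergence $\frac1{\sqrt\e}\|\mu(\theta_\e^f)\|_{L^2(S\times\OmegaIdx{f})}\to\|\mu(\theta^f)\|_{L^2(S\times\Sigma\times Z)}$ and the defining oscillating-test-function limit both hold.

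Alternatively, and perhaps more cleanly for the write-up, I would avoid unfolding and argue directly: split $\mu(\theta_\e^f)\varphi(\cdot,\tilde x, x/\e) - \mu(\theta^f)\varphi(\cdot,\tilde x, x/\e)$ using the strong two-scale convergence of $\theta_\e^f$ to extract a subsequence along which $\theta_\e^f(x) - \theta^f(\tilde x, x/\e)\to 0$ in a suitable averaged sense, then use uniform continuity of $\mu$ on the relevant bounded range together with the uniform bound $|\mu|\le\overline\mu$ and the boundedness of $\varphi$ to control the difference; the contribution of $\mu(\theta^f(\tilde x, x/\e))\varphi(\cdot,\tilde x, x/\e)$ converges by the definition of two-scale convergence applied to the admissible (Carathéodory, bounded) integrand $\mu(\theta^f)\varphi$. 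The main obstacle — and really the only nontrivial point — is handling the composition of the continuous (but not necessarily Lipschitz) nonlinearity $\mu$ with a merely strongly-convergent sequence; this is where passing to a pointwise-a.e.-convergent subsequence (legitimate because strong $L^2$ convergence on the fixed domain $S\times\Sigma\times Z$ after unfolding yields it) combined with the uniform bound $\underline\mu\le\mu\le\overline\mu$ and dominated convergence is essential. Since the limit $\mu(\theta^f)$ is uniquely determined, the convergence holds for the whole (already extracted) sequence.
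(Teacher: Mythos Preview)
Your argument is correct and is exactly the content the paper intends: the corollary is stated without proof immediately after \cref{pro:strong_twosc}, with only the remark that it follows from the strong two-scale convergence of $\theta_\e^f$ together with the continuity and uniform bounds on $\mu$ from \ref{item:A2}. Your unfolding route (strong $L^2$ convergence on the fixed domain, a.e.~subsequence, continuity of $\mu$, dominated convergence via $\underline{\mu}\le\mu\le\overline{\mu}$, uniqueness of the limit) is precisely how one makes this rigorous, so there is nothing to add.
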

\subsection{Homogenization limit}
We now carry out the homogenization process in the weak formulation.
Some of the steps in this limit process are standard in this case; therefore we do not explain everything in detail but only comment on the general behavior and explain the important terms more in-depth.

\begin{theorem}[Homogenized model]\label{thm:homogenized_model}
    Let the Assumptions \ref{item:A1}-\ref{item:A7} be fulfilled and let $(\theta^s, \theta^f, u, p)$ be the limit functions given by \cref{pro:existence_limit}. Set $\Bar{u}(t, \tildex)= \int_Z u(t, \tildex, y) \di{y}$.
    Then $(\theta^s, \theta^f, \Bar{u}, p)$ are weak solutions of the following problem:
    \begin{subequations}\label{eq:effective_model}
    \begin{alignat}{2}
        \partial_t\theta^s - \kappa^s \Delta \theta^s &= 0 \quad \quad &&\text{ in } S \times \Omega, 
        \\
        \theta^s(0, \cdot) &= \theta^s_{0} &&\text{ in } \Omega, 
        \\
        -\kappa^s \nabla \theta^{s} \cdot \nu &= 0 &&\text{ on } S \times \partial \Omega \setminus \Sigma,
        \\ 
        -\kappa^s \nabla \theta^{s} \cdot \nu &= \alphafg |\Gamma| \left(\theta^f -\theta^s\right) + \Bar{f}^s \quad &&\text{ on } S \times \Sigma,
    \end{alignat}
    where $\Bar{f}^{f,s}(t, \tildex) \coloneqq \int_\Gamma f^{f,s}(t, \tildex, y) \di{\sigma_y}$. The fluid temperature $\theta^f$ satisfies
    \begin{alignat}{2}
        |Z| \partial_t\theta^f - \dive_{\tildex}{\left(\Tilde{\kappa} \nabla_{\tildex} \theta^f + \Bar{u} \theta^f\right)} &= \alphafg |\Gamma| \left(\theta^f -\theta^s\right) + \Bar{f}^f
        \quad \quad 
        &&\text{ in } S\times \Sigma, 
        \\
        \theta^f(0, \cdot) &= \theta^f_{0} &&\text{ in } \Sigma, 
        \\
        -\Tilde{\kappa} \nabla_{\tildex} \theta^{f} \cdot n_\Sigma &= 0 &&\text{ on } S \times \SigmaOut,
        \\ 
        \theta^{f} &= 0 \quad &&\text{ on } S \times \SigmaIn.
    \end{alignat}
    The fluid movement in the layer is governed by
    \begin{alignat}{2}
        \mu(\theta^f) \Bar{u} &= -K \nabla_{\tildex} p + \mu(\theta^f) \Bar{\xi}_0 \quad &&\text{ in } S\times \Sigma, \label{eq:darcy_layer}
        \\
        \dive_{\tildex}(\Bar{u}) &= 0 \quad &&\text{ in } S\times \Sigma, \label{eq:divergence_layer}
        \\
        \int_\Sigma p \di{\tilde{x}} &= 0 \quad &&\text{ in } S,
        \\
        \Bar{u}(t, \tildex) \cdot n_\Sigma &= \int_0^1 \uBC(\tildex, y) \di{y} \cdot n_\Sigma \quad &&\text{ on } S\times \partial \Sigma.\label{eq:bc_flow_layer}
    \end{alignat}
    \end{subequations}
    The constant effective conductivity $\tilde{\kappa}\in\R^{(d-1)\times(d-1)}$, effective permeability $K\in \R^{(d-1)\times(d-1)}$ and induced flow $\bar{\xi}_0 \in \R^{d-1}$ are given as
    \[
    \tilde{\kappa}_{ij} = \int_{Z} \kappa^f (\delta_{ij} + \nabla_y \omega_i \cdot e_j) \di{y},\quad K_{ij} = \int_Z \xi_j \cdot e_i \di{y}, \quad
    (\bar{\xi}_0)_i = \int_Z (\xi_0)_i(y) \di{y},
    \]
    where the cell solutions $w_i\in H^1_\#(Z)$, $\xi_{i}\in H^1_\#(Z)^d$, and $\xi_0\in H^1_\#(Z)^d$ ($i=1,...,d-1$) are the unique solutions to the local cell problems given by \cref{eq:cell_problem_temp,eq:cell_problem_stokes,eq:cell_problem_stokes_bc_movement}.
\end{theorem}
\begin{proof}
    The existence of limit functions $(\theta^s, \theta^f, \Bar{u}, p)$ is guaranteed by \cref{pro:existence_limit}.
    The precise structure of the limit model \cref{eq:effective_model} is derived with the concept of two-scale convergence as outlined in \cref{limit_procedure_heat,limit_procedure_stokes} below.
\end{proof}
\begin{remark}
    Note, that by \cref{pro:direction_d_zero} and the equality \cref{eq:darcy_layer} we obtain that $\Bar{u} \cdot e_d = 0$. This is reasonable since we remain with an averaged interface velocity in the effective model and a nonzero flow in direction $d$ would imply a flow into (or out of) the solid domain.
    We recovered, ignoring the temperature-dependent viscosity, the same effective equation as in \cite{Bayda_1989}, where the needed assumptions for convergence are reduced, thanks to the two-scale concept. 

    Additionally, in 2D the effective problem for the fluid simplifies further. By \cref{eq:divergence_layer} it holds $\dive_{\tildex}\Bar{u} = \partial_{x_1} \Bar{u}_1 = 0$ and the velocity is a constant solely given by the boundary function $\uBC$ in \cref{eq:bc_flow_layer}. This linearizes the problem since the temperature equation is independent of the pressure $p$.
\end{remark}

The existence of a solution of model (\ref{eq:effective_model}) is ensured by the existence of the original problem and the two-scale limits.
Similar to the micro model (see \cref{thm:eps_problem_uniqueness}), we need additional regularity assumptions to ensure uniqueness.
These are, however, slightly lower as we have better embeddings to work with due to the dimension reduction in the fluid layer
Please note that, in this case, all convergences in \cref{pro:existence_limit} hold for the full sequences and not just for subsequences.
\begin{theorem}[Uniqueness of the homogenized model]\label{thm:homogenized_model_uniqueness}
If there is a weak solution of the homogenized problem where $\nabla_{\tilde{x}} \theta^f\in L^\infty(S;L^{2+\delta}(\Sigma))$ and $\nabla_{\tilde{x}} p\in L^\infty(S;L^{2+\delta}(\Sigma))$ for some $\delta>0$, then this solution is the only one.
\end{theorem}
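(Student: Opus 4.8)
The plan is to adapt, essentially verbatim, the uniqueness argument of \cref{thm:eps_problem_uniqueness} to the effective system \cref{eq:effective_model}. Suppose $(\theta^{s,i},\theta^{f,i},\bar u^i,p^i)$, $i=1,2$, are two weak solutions, i.e. they satisfy \cref{eq:weak_formulation_temp_limit,eq:weak_formulation_fluid_limit} with the same data, and assume that solution $1$ additionally obeys $\nabla_{\tilde{x}}\theta^{f,1}\in L^\infty(S;L^{2+\delta}(\Sigma))$ and $\nabla_{\tilde{x}}p^1\in L^\infty(S;L^{2+\delta}(\Sigma))$. Denote the differences by $(\overline{\theta^s},\overline{\theta^f},\overline{u},\overline{p})$; since the initial and boundary data coincide, $\overline{\theta^s}(0)=0$, $\overline{\theta^f}(0)=0$, $\overline{\theta^f}=0$ on $S\times\SigmaIn$, $\overline{u}\cdot n_\Sigma=0$ on $S\times\partial\Sigma$, and the source term $\bar f$ cancels. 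The goal is to derive a Gronwall inequality for $\|\overline{\theta^s}(t)\|_{L^2(\Omega)}^2+\|\overline{\theta^f}(t)\|_{L^2(\Sigma)}^2$.

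First I would treat the (now algebraic) flow part. Writing $\bar u^i=-\tfrac{K}{\mu(\theta^{f,i})}\nabla_{\tilde{x}}p^i+\bar\xi_0$ with $\bar\xi_0$ a constant vector, one gets the identity
\[
\overline{u}=-K\Bigl(\tfrac{1}{\mu(\theta^{f,2})}\nabla_{\tilde{x}}\overline{p}+\bigl(\tfrac{1}{\mu(\theta^{f,1})}-\tfrac{1}{\mu(\theta^{f,2})}\bigr)\nabla_{\tilde{x}}p^1\Bigr),
\]
in which only $\nabla_{\tilde{x}}p^1$ (the regular pressure) is multiplied by a viscosity difference. Testing the difference of the pressure equation contained in \cref{eq:weak_formulation_fluid_limit} (i.e. with $v=0$, $q=\overline{p}$) and using that $K$ is symmetric positive definite, that $\underline{\mu}\le\mu\le\overline{\mu}$, the Lipschitz continuity of $\mu$, and Hölder's inequality with the exponent triple $(r,2+\delta,2)$ where $\tfrac1r=\tfrac12-\tfrac1{2+\delta}$ (so $r<\infty$ precisely because $\delta>0$), gives $\|\nabla_{\tilde{x}}\overline{p}(t)\|_{L^2(\Sigma)}\le C\|\overline{\theta^f}(t)\|_{L^r(\Sigma)}$ and hence, for almost all $t\in S$,
\[
\|\overline{u}(t)\|_{L^2(\Sigma)}+\|\nabla_{\tilde{x}}\overline{p}(t)\|_{L^2(\Sigma)}\le C\,\|\overline{\theta^f}(t)\|_{L^r(\Sigma)},
\]
with $C$ depending on $\|\nabla_{\tilde{x}}p^1\|_{L^\infty(S;L^{2+\delta}(\Sigma))}$ but not on $t$.

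Next I would test the difference of \cref{eq:weak_formulation_temp_limit} with $(\varphi,\psi)=(\overline{\theta^s},\overline{\theta^f})$. The time derivatives yield $\tfrac12\ddt\bigl(\|\overline{\theta^s}\|_{L^2(\Omega)}^2+|Z|\|\overline{\theta^f}\|_{L^2(\Sigma)}^2\bigr)$, the diffusion terms give $\kappa^s\|\nabla\overline{\theta^s}\|_{L^2(\Omega)}^2+(\tilde\kappa\nabla_{\tilde{x}}\overline{\theta^f},\nabla_{\tilde{x}}\overline{\theta^f})_\Sigma\ge\kappa^s\|\nabla\overline{\theta^s}\|_{L^2(\Omega)}^2+c_{\tilde\kappa}\|\nabla_{\tilde{x}}\overline{\theta^f}\|_{L^2(\Sigma)}^2$ since $\tilde\kappa$ is (on the tangential directions) the standard positive-definite homogenized conductivity, and the Robin term contributes $\alpha|\Gamma|\|\overline{\theta^s}-\overline{\theta^f}\|_{L^2(\Sigma)}^2\ge0$. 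For the convection difference I would split $\bar u^1\cdot\nabla_{\tilde{x}}\theta^{f,1}-\bar u^2\cdot\nabla_{\tilde{x}}\theta^{f,2}=\overline{u}\cdot\nabla_{\tilde{x}}\theta^{f,1}+\bar u^2\cdot\nabla_{\tilde{x}}\overline{\theta^f}$; tested against $\overline{\theta^f}$, the second summand equals, after integration by parts and $\dive_{\tilde{x}}\bar u^2=0$ (\cref{eq:divergence_layer}), $\tfrac12\int_{\partial\Sigma}(\bar u^2\cdot n_\Sigma)(\overline{\theta^f})^2$, which is nonnegative because $\overline{\theta^f}=0$ on $\SigmaIn$ and $\bar u^2\cdot n_\Sigma=\int_0^1\uBC\cdot n_\Sigma\di{y_d}\ge0$ on $\SigmaOut$ by \cref{eq:bc_flow_layer} — exactly the mechanism of \cref{eq:convection_indentity} — so it moves to the left-hand side. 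The remaining cross term is controlled using solution $1$'s regularity and the flow estimate:
\[
|(\overline{u}\cdot\nabla_{\tilde{x}}\theta^{f,1},\overline{\theta^f})_\Sigma|\le\|\overline{u}\|_{L^2(\Sigma)}\|\nabla_{\tilde{x}}\theta^{f,1}\|_{L^{2+\delta}(\Sigma)}\|\overline{\theta^f}\|_{L^r(\Sigma)}\le C\,\|\overline{\theta^f}\|_{L^r(\Sigma)}^2.
\]

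To close, I would invoke Gagliardo--Nirenberg on $\Sigma\subset\R^{d-1}$ with $d\in\{2,3\}$: since $r<\infty$ there is $a\in(0,1)$ with $\|\overline{\theta^f}\|_{L^r(\Sigma)}^2\le C\|\overline{\theta^f}\|_{L^2(\Sigma)}^{2(1-a)}\|\overline{\theta^f}\|_{H^1(\Sigma)}^{2a}$, and $a<1$ strictly because $\delta>0$. Feeding this into the cross-term bound and applying the scaled Young inequality produces $\tfrac14 c_{\tilde\kappa}\|\nabla_{\tilde{x}}\overline{\theta^f}\|_{L^2(\Sigma)}^2+C\|\overline{\theta^f}\|_{L^2(\Sigma)}^2$, the first term being absorbed by the diffusion on the left. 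Integrating over $(0,t)$ and using the vanishing initial data yields $\|\overline{\theta^s}(t)\|_{L^2(\Omega)}^2+|Z|\|\overline{\theta^f}(t)\|_{L^2(\Sigma)}^2\le C\int_0^t\|\overline{\theta^f}(\tau)\|_{L^2(\Sigma)}^2\di{\tau}$, whence $\overline{\theta^s}=\overline{\theta^f}=0$ by Gronwall; the flow estimate then forces $\overline{u}=0$ and $\nabla_{\tilde{x}}\overline{p}=0$, and $\overline{p}=0$ by the zero-mean normalization. I expect the main obstacle to be the convection term: one must combine the divergence-free/skew-symmetric structure (to discard the diagonal part with a favourable boundary sign, as in the microscale proof) with the assumed higher integrability of $\nabla_{\tilde{x}}p^1$ and $\nabla_{\tilde{x}}\theta^{f,1}$, and it is the strict positivity of $\delta$ that makes the Gagliardo--Nirenberg exponent subcritical ($a<1$) so that the gradient term can be absorbed — without it the argument breaks, mirroring the lower-regularity $2$D remark following \cref{thm:eps_problem_uniqueness}. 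A secondary point is checking coercivity of $\tilde\kappa$ on the tangential directions and integrability of the products appearing, which is precisely what the exponent triple $(r,2+\delta,2)$ is arranged to guarantee.
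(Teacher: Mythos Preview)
Your proposal is correct and follows essentially the same strategy as the paper's proof: first control $\|\overline{u}\|_{L^2}+\|\nabla_{\tilde{x}}\overline{p}\|_{L^2}$ by $\|\overline{\theta^f}\|_{L^r}$ via the Lipschitz viscosity and the assumed $L^{2+\delta}$-integrability of one pressure gradient, then run the energy estimate on the heat system, dispose of the diagonal convection term through the divergence-free/boundary-sign argument (as in \cref{eq:convection_indentity}), bound the cross term with the $L^{2+\delta}$-regularity of $\nabla_{\tilde{x}}\theta^{f,1}$, and close with Gagliardo--Nirenberg (subcritical because $\delta>0$), Young, and Gronwall. The only cosmetic differences are that the paper labels the regular solution as the second one and carries the fluid estimate in $L^2(S\times\Sigma)$ rather than pointwise in $t$; neither changes the argument.
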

\begin{proof}
    Assume that $(\theta^f_i,\theta^s_i,\bar{u}_i,p_i)$, $i=1,2$, are two sets of weak solutions of the homogenization limit given by System \eqref{eq:effective_model} (the variational formulations are also stated in \cref{eq:weak_formulation_temp_limit,eq:identiy_with_cell_solutions,eq:weak_formulation_fluid_limit}).
    In the following, we denote their differences by $(\bar{\theta}^f,\bar{\theta}^s,\bar{\bar{u}},\bar{p})$.
    W.l.o.g. we assume that $(\theta_2^f,p_2)$ is the solution enjoying the higher gradient integrability.

    From \cref{eq:identiy_with_cell_solutions} we obtain for $\bar{\bar{u}}$, that
    \[
    \|\bar{\bar{u}}\|_{L^2(S\times\Sigma)}^2
    \leq C\left|\left(\nabla_{\tilde{x}} \bar{p},\bar{\bar{u}}\right)_{S\times\Sigma}\right|
    +\left|\left(\left[\frac{1}{\mu(\theta^f_1)}-\frac{1}{\mu(\theta^f_2)}\right]\nabla_{\tilde{x}} p_2,\bar{\bar{u}}\right)_{S\times\Sigma}\right|
    \]
    which, using the Lipschitz continuity of $\mu$ as well as the higher regularity of $p$, yields
    \[
    \|\bar{\bar{u}}\|_{L^2(S\times\Sigma)}
    \leq C\left(\|\nabla_{\tilde{x}} \bar{p}\|_{L^2(S\times\Sigma)}
    +\|\nabla_{\tilde{x}}p_2\|_{L^\infty(S;L^{2+\delta}(\Omega))}\|\bar{\theta}^f\|_{L^2(S;L^{2+\frac{4}{\delta}}(\Sigma))}\right).
    \]
    Testing \cref{eq:weak_formulation_fluid_limit} with $\psi = \bar{p}$, we get:
    \begin{equation*}
    c\|\nabla_{\tilde{x}}\bar{p}\|_{L^2(S\times\Sigma)}
    \leq C
    \|\nabla_{\tilde{x}}p_2\|_{L^\infty(S;L^{2+\delta}(\Omega))}\|\bar{\theta}^f\|_{L^2(S;L^{2+\frac{4}{\delta}}(\Sigma))}
    \end{equation*}
    As a result, the uniqueness of velocity and pressure follows from the uniqueness of the fluid temperatures via
    \begin{equation}\label{eq:uniqueness_fluid_limit}
    \|\bar{\bar{u}}\|_{L^2(S\times\Sigma)}+\|\nabla_{\tilde{x}}\bar{p}\|_{L^2(S\times\Sigma)}
    \leq C\|\bar{\theta}^f\|_{L^2(S;L^{2+\frac{4}{\delta}}(\Sigma))}.
    \end{equation}

    We now look at the weak forms for the heat system (\cref{eq:weak_formulation_temp_limit}), take their difference and test with $(\bar{\theta}^s,\bar{\theta}^f)\in L^2(S;H^1(\Omega)\times H^1(\Sigma))$ to get
    \begin{multline*}
    (\partial_t\bar{\theta}^s,\bar{\theta}^s)_{S\times\Omega}
    +|Z|(\partial_t\bar{\theta}^f,\bar{\theta}^f)_{S\times\Sigma}
    +\kappa^s(\nabla\bar{\theta}^s,\nabla\bar{\theta}^s)_{S\times\Omega}
    +(\tilde{\kappa}\nabla_{\tilde{x}}\bar{\theta}^f,\nabla\bar{\theta}^f)_{S\times\Sigma}\\
    +(\bar{u}_1\cdot\nabla_{\tilde{x}}\bar{\theta}^f,\bar{\theta}^f)_{S\times\Sigma}+\alpha|\Gamma|(\bar{\theta}^s-\bar{\theta}^f,\bar{\theta}^s-\bar{\theta}^f)_{S\times\Sigma}
    =-(\bar{\bar{u}}\cdot\nabla_{\tilde{x}}\theta^f_2,\bar{\theta}^f)_{S\times\Sigma}
    \end{multline*}
    for all $(\varphi,\psi)\in L^2(S;H^1(\Omega)\times H^1(\Sigma))$.
    For the advection term on the left-hand side, we exploit that $\bar{u}_1$ is divergence-free (cf. \cref{eq:convection_indentity}):
    \[
    (\bar{u}_1\cdot\nabla_{\tilde{x}}\bar{\theta}^f,\bar{\theta}^f)_{S\times\Sigma}=\frac{1}{2}(u_{BC}\cdot n,|\bar{\theta}^f|^2)_{S\times\Sigma}\geq0.
    \]
    The advective term on the right-hand side can be handled via the additional regularity of $\nabla_{\tilde{x}}\theta^f_2$; in particular we have
    \[
    |(\bar{\bar{u}}\cdot\nabla_{\tilde{x}}\theta^f_2,\bar{\theta}^f)_{S\times\Sigma}|\leq\|\bar{u}\|_{L^2(S\times\Sigma)}\|\bar{\theta}^f\|_{L^2(S;L^{2+\frac{4}{\delta}}(\Sigma))}\|\nabla\theta_2^f\|_{L^2(S;L^{2+\delta}(\Sigma))}.
    \]
    Please note that $\Sigma$ is at most two-dimensional.
    For that reason, the Sobolev embedding $H^1\subset L^{2+\frac{4}{\delta}}$ holds for all $\delta>0$.
    With the same line of argument as in \cref{thm:eps_problem_uniqueness}, uniqueness now follows.
\end{proof}

\subsubsection{Limit procedure for the heat system.}\label{limit_procedure_heat}
The $\e$-solutions $\theta_\e\in L^2(S;\Theta_\e)\cap H^1(S;\Theta_\e')$ satisfy
\begin{multline}\label{eq:weak_form_micro}
        \langle \partial_t \theta_\e, \phi \rangle_{H^1(\OmegaIdx{s})'} 
        + (\kappa\nabla\theta_\e, \nabla\phi)_{ \OmegaIdx{s}} 
        + \frac{1}{\varepsilon}\langle\partial_t \theta_\e, \phi\rangle_{H^1_{\SigmaInEps}(\OmegaIdx{f})'} 
        + \frac{1}{\varepsilon} (\kappa\nabla\theta_\e, \nabla\phi)_{ \OmegaIdx{f}} 
        \\
        +\frac{1}{\e} (u_\e \cdot \nabla\theta_\e, \phi)_{\OmegaIdx{f}} 
        + \alphafg (\jump{\theta_\varepsilon}, \jump{\phi})_{ \Gammafs}
        = (f_\e^s, \phi^s)_{\Gammafs} + (f_\e^f, \phi^f)_{\Gammafs}
\end{multline}
for all test functions $\phi=(\phi^s,\phi^f)\in L^2(S;\Theta_\e)$.
As a reminder, $\Theta_\e=H^1(\OmegaIdx{s})\times H^1(\OmegaIdx{f})$.

We start with the limit inside $\OmegaIdx{s}$ by choosing test functions $\phi=(\phi^s,0)$.
Given the Assumptions \ref{item:A6} and \ref{item:A7} for the convergence of the initial condition and the heat source as well as the convergence results for $\theta_\e^s$ (\cref{{pro:existence_limit}}.$(i)$), passing to the limit in the weak formulation can be carried out directly without relying on two-scale convergence.
We point to \cite[Section 5]{Donato_2019} for a similar setup with a rough interface.
In the limit, we get
\begin{equation}\label{eq:HomogenizationThermoS}
        \langle \partial_t \theta^s, \phi^s \rangle_{H^1(\Omega)'} 
        + (\kappa^s\nabla\theta^s, \nabla\phi^s)_{ \Omega}
        + \alphafg|\Gamma| (\theta^s-\theta^f, \phi^s)_{\Sigma}
        = (\bar{f}^s, \phi^s)_{\Sigma}
\end{equation}
which holds for all test functions $\phi^s\in H^1(\Omega)$ and almost all $t\in S$.


Most aspects of the temperature limit in $\OmegaIdx{f}$ are also quite standard and we point to \cite{Bhattacharya2022, Neuss07, Gahn17} for similar setups.
As usual for such diffusion problems, the limit $\theta^f_1$ can be represented as
\begin{equation}\label{eq:identiy_with_cell_solutions_temp}
    \theta^f_1(t, \tildex, y) = \sum_{i=1}^{d-1} \omega_i(y) \partial_{\tildex_i} \theta^f(t, \tildex)   
\end{equation}
where $\omega_i \in H^1_{\#}(Z)$, $i=1,..,d-1$, are weak solutions (unique up to a constant) of
\begin{equation}\label{eq:cell_problem_temp}
    \begin{split}
        -\Delta w_i &= 0 \quad 
        \text{ in } Z, \\
        (\nabla_y \omega_i + e_i) \cdot n &= 0 \quad \text{ on } \GammaD,\\
        w_i(\cdot+e_i)&=w(\cdot)\quad \text{in $Z$, $i=1,...,d-1$.}
    \end{split}
\end{equation}
Now, take $\varphi_\e = \varphi_0(t, \tildex) + \e \varphi_1\left(t, \tildex, \nicefrac{x}{\e}\right)$ 
with $\varphi_0 \in C^\infty(\overline{S \times \Sigma})$ 
and $\varphi_1 \in C^\infty(\overline{S \times \Sigma}, C_{\#}^\infty(Z))$ such that $\varphi_0(t, \tildex) = 0$ and $\varphi_1(t, \tildex, y) = 0$ for $\tildex \in \SigmaIn$. When testing \cref{eq:weak_form_micro} with $\phi=(0,\varphi_\e)$ and passing to the limit $\e\to0$, the only problematic term is the convection term.
Here, Assumption \ref{item:A5} as well as the strong two-scale convergence of $\thetaIdx{f} \stwosc \theta^f$ are used to pass to the limit.
Via an integration by parts, we have
\begin{align*}
    (u_\e \cdot \nabla \theta_\e, \varphi_\e)_{S \times \OmegaIdx{f}} &=
    -(u_\e \theta_\e, \nabla_{\tildex} \varphi_0)_{S \times \OmegaIdx{f}} 
    - \e (u_\e \theta_\e, \nabla_{\tildex} \varphi_1)_{S \times \OmegaIdx{f}}
    - (u_\e \theta_\e, \nabla_{y} \varphi_1)_{S \times \OmegaIdx{f}}
    \\
    &\hspace{1.5cm} + (u_\e \cdot n \theta_\e, \varphi_0)_{S \times \partial \OmegaIdx{f}}
    + \e (u_\e \cdot n \theta_\e, \varphi_1)_{S \times \partial \OmegaIdx{f}}
\end{align*}
At the boundary $\partial\OmegaIdx{f}$, we have $u_\e\cdot n=0$ outside of $\Sigma_\e^{in}\cup\Sigma_\e^{out}$ and $u_\e=\uEpsBC$.
At $\Sigma_\e^{in}$, we have $\theta_\e^f=0$, therefore
\[
(u_\e \cdot n \theta_\e, \varphi_0)_{S \times \partial \OmegaIdx{f}}
    + \e (u_\e \cdot n \theta_\e, \varphi_1)_{S \times \partial \OmegaIdx{f}}
=(u_\e \cdot n \theta_\e, \varphi_0)_{S \times \Sigma_\e^{out}}
    + \e (u_\e \cdot n \theta_\e, \varphi_1)_{S \times\Sigma_\e^{out}}.
\]
Moreover, as $\theta_\e\in L^\infty(S\times\OmegaIdx{f})\cap L^2(S;H^1(\OmegaIdx{f}))$ uniformly in $\e$, its traces are also uniformly bounded in $L^\infty(S\times\Sigma_\e^{out})$ which is also the case for $\uEpsBC$.
As a consequence, with $|\Sigma_\e^{out}|\to0$ in mind:
\[
\lim_{\e\to0}\frac{1}{\e}\e (u_\e \cdot n \theta_\e, \varphi_1)_{S \times\Sigma_\e^{out}}=\lim_{\e\to0}(\uEpsBC \cdot n \theta_\e, \varphi_1)_{S \times \Sigma_\e^{out}}\to0.
\]
Looking at the remaining boundary term, we have 
\[
(u_\e \cdot n \theta_\e, \varphi_0)_{S \times \Sigma_\e^{out}}=(\chi_{\Sigma_\e^{out}}\uEpsBC \cdot n \theta_\e, \varphi_0)_{S \times \partial \Sigma\times(0,\e)}.
\]
Now, transforming to an $\e$-independent domain:
\[
(u_\e \cdot n \theta_\e, \varphi_0)_{S \times \Sigma_\e^{out}}=\e\left(\chi_{\Sigma^{out}}\uBC \cdot n \theta_\e(\tilde{x},\e x_d), \varphi_0\right)_{S \times \partial \Sigma\times(0,1)}
\]
Now, with the uniform bounds on $\theta_\e^f$, $\frac{1}{\e}(u_\e \cdot n \theta_\e, \varphi_0)_{S \times \Sigma_\e^{out}}$ must converge and since $\theta_\e^f$ is the only $\e$-dependent function on the right-hand side, it holds
\[
\lim_{\e\to0}\frac{1}{\e}(u_\e \cdot n \theta_\e, \varphi_0)_{S \times \Sigma_\e^{out}}=(\uBC \cdot n \theta^f,\varphi_0)_{S\times\Sigma^{out}\times(0, 1)}
\]
Utilizing $\thetaIdx{f} \stwosc \chi_Z\theta^f$, $u_\e\twosc \chi_Zu$, and the above computation, we pass to the limit:
\begin{align*}
    \lim_{\e \to 0} \frac{1}{\e} (u_\e \cdot \nabla \theta_\e, \varphi_\e)_{S \times \OmegaIdx{f}} = 
    &- (u \theta^f, \nabla_{\tildex} \varphi_0)_{S \times \Sigma \times Z} 
    - (u \theta^f, \nabla_{y} \varphi_1)_{S \times \Sigma \times Z} 
    \\
    &+ (\uBC(\tildex, y_d) \cdot n_\Sigma \theta^f, \varphi_0)_{S \times \partial \Sigma \times (0,1)}.
\end{align*}
Since $\theta^f$ is a scalar function independent of $y\in Z$, $\dive_y(u\theta^f)=0$ follows from $\dive_yu=0$.
Also, with $u=0$ almost everywhere on $S\times\Sigma\times\Gamma$ and the periodicity of $u$ with respect to $y\in Z$, it holds 
\[
-(u \theta^f, \nabla_{y} \varphi_1)_{S \times \Sigma \times Z} =(\dive_y(u \theta^f),\varphi_1)_{S \times \Sigma \times Z}=0. 
\]
We introduce $\Bar{u}(t, \tildex)= \int_Z u(t, \tildex, y) \di{y}$ and note that $\dive_{\tilde{x}}\Bar{u}=0$ (\cref{pro:existence_limit}.$(iii)$).
This yields
\begin{align*}
    \lim_{\e \to 0} \frac{1}{\e} (u_\e \nabla \theta_\e, \varphi_\e)_{S \times \OmegaIdx{f}} 
    &=  
    (\Bar{u} \nabla_{\tildex} \theta^f, \varphi_0)_{S \times \Sigma} 
    + \left(\Bar{u} \cdot n_\Sigma - \int_{0}^1 \uBC(\tildex, s) \cdot n_\Sigma \di{s}, \theta^f \varphi_0 \right)_{S \times \partial \Sigma}.
\end{align*}
Consequently, the limit for the heat flow inside $\OmegaIdx{f}$ takes the form
\begin{multline*}
        |Z|\langle\partial_t\theta^f,\varphi_0\rangle_{L^2(S; H^1_{\SigmaIn}(\Sigma)')}
        +(\tilde{\kappa}\nabla_{\tilde{x}}\theta^f,\nabla_{\tilde{x}}\varphi_0)_{S\times\Sigma}
        +(\bar{u}\cdot\nabla_{\tilde{x}}\theta^f,\varphi_0)_{S\times\Sigma}\\
        -\alpha|\Gamma|(\theta^s-\theta^f,\varphi_0)_{S\times\Sigma}
        +\left(\theta^f\left[\Bar{u} \cdot n_\Sigma - \int_{0}^1 \uBC(\tildex, s) \cdot n_\Sigma \di{s}\right], \varphi_0 \right)_{S \times \partial \Sigma}
        = (\bar{f}^f,\varphi_0)_{S\times\Sigma}
\end{multline*}
which, by density, holds for all $\varphi_0\in L^2(S;H^1_{\Sigma_{in}}(\Sigma))$.
In this formulation, however, the additional term over $\partial\Sigma$ is a bit puzzling.
As a next step, we show that it is vanishing.
To that end, we investigate the outer boundary behavior of $\Bar{u}$.
With the two-scale convergence of $u_\e$ and the choice of the  Dirichlet condition $\uEpsBC$, testing with any $\varphi \in C^\infty(\overline{\Sigma})$ yields
\begin{align*}
    0 &= \lim_{\e \to 0} \frac{1}{\e} \int_{\OmegaIdx{f}} \dive{u_\e(x)} \varphi(\tildex) \di{x}
    = \lim_{\e \to 0} \frac{1}{\e}\left[-\int_{\OmegaIdx{f}} u_\e(x)\cdot\begin{pmatrix}\nabla_{\tildex} \varphi(\tildex)\\ 0\end{pmatrix} \di{x}
      + \int_{\partial \OmegaIdx{f}} u_\e \cdot n \varphi(\tildex) \di{\sigma_x}\right]
    \\
    &= -\int_{\Sigma} \int_Z  u(\tildex, y) \nabla_{\tildex} \varphi(\tildex) \di{y} \di{\tildex}
      + \int_{\partial \Sigma} \left(\int_0^1 \uBC(\tildex, s) \di{s}\right)\cdot n_\Sigma \varphi(\tildex) \di{\sigma_{\tildex}}.    
\end{align*}
Integration by parts in the last line, lets us conclude that
\begin{equation}\label{eq:effecitve_boundary_u}
    \Bar{u}(t, \tildex) \cdot n_\Sigma = \int_0^1 \uBC(\tildex, s) \di{s} \cdot n_\Sigma
\end{equation}
on $\partial \Sigma$ and for almost all $t \in S$.
As a consequence,
\begin{multline}\label{eq:HomogenizationThermoF}
        |Z|\langle\partial_t\theta^f,\varphi_0\rangle_{L^2(S; H^1_{\SigmaIn}(\Sigma)')}
        +(\tilde{\kappa}\nabla_{\tilde{x}}\theta^f,\nabla_{\tilde{x}}\varphi_0)_{S\times\Sigma}
        +(\bar{u}\cdot\nabla_{\tilde{x}}\theta^f,\varphi_0)_{S\times\Sigma}\\
        -\alpha|\Gamma|(\theta^s-\theta^f,\varphi_0)_{S\times\Sigma}
        = (\bar{f}^f,\varphi_0)_{S\times\Sigma}
\end{multline}
for all $\varphi_0\in L^2(S;H^1_{\Sigma_{in}}(\Sigma))$.

Finally, we show that $\theta^f$ satisfies a homogeneous Dirichlet condition on  $\Sigma_{in}$.
To that end, take $\varphi \in C^\infty(\overline{S\times \Sigma})$ with $\varphi=0$ on $\Sigma_{out}$ and $\psi \in C_\#^\infty(Z)^d$ with $\dive_y{\psi}=0$ and $\psi = 0$ on $\GammaD$.
With $\phi_\e\in C^\infty(\overline{S\times\Omega})$ given by $\phi_\e(t,x)=\varphi(t, \tildex)\psi(\nicefrac{x}{\e})$, it holds
\begin{align*}
    \frac{1}{\e} \int_S \int_{\OmegaIdx{f}} \thetaIdx{f}(t,x) \dive_{x}\phi_\e(t,x) \di{x} \di{t}
    = - \frac{1}{\e} \int_S \int_{\OmegaIdx{f}} \nabla_x \thetaIdx{f}(t,x)\cdot \varphi(t,\tildex) \psi(\nicefrac{x}{\e})
    \di{x} \di{t}.
\end{align*}
On the left-hand side, we simplify via $\dive_x\phi_\e(t,x)=\nabla_{\tilde{x}}\varphi(t,\tilde{x})\cdot\psi(\nicefrac{x}{\e})$ (due to $\dive_y\psi=0$).
Passing to the limit $\e\to0$ on both sides, we then obtain
\begin{multline}\label{eq:homogenization_thermo}
    \int_S \int_{\Sigma}\int_{Z}  \theta^f(t,\tildex)\nabla_{\tilde{x}}\varphi(t,\tilde{x})\cdot\psi(y)
    \di{y} \di{\tildex} \di{t}\\
    =-\int_S \int_{\Sigma}\int_{Z} \left(\nabla_{\tildex}\theta^f(t, \tildex) + \nabla_y \theta^f_1(t, \tildex, y)\right) \cdot  \varphi(t,\tildex)\psi(y)
    \di{y} \di{\tildex} \di{t}
\end{multline}
Via an integration by parts, we can write the right-hand side as
\begin{multline}\label{eq:homogenization_thermo2}
    -\int_S \int_{\Sigma}\int_{Z} \left(\nabla_{\tildex}\theta^f(t, \tildex) + \nabla_y \theta^f_1(t, \tildex, y)\right) \cdot  \varphi(t,\tildex)\psi(y)
    \di{y} \di{\tildex} \di{t} \\
    = \int_S \int_{\Sigma}\int_{Z} \theta^f(t, \tildex)  \nabla_{\tilde{x}}\varphi(t,\tilde{x})\cdot\psi(y)
    \di{y} \di{\tildex} \di{t} - \int_S \int_{\SigmaIn}\theta^f(t, \tildex) \varphi(t,\tildex) \int_{Z}\psi(y)
    \di{y} \cdot n_\Sigma \di{\sigma_{\tildex}} \di{t}
    \\
    + \int_S \int_{\Sigma}\int_{Z} \theta^f_1(t, \tildex, y)\varphi(t,\tildex)  \dive_{y}{\psi(y)}
    \di{y} \di{\tildex} \di{t} - \int_S \int_{\Sigma} \int_{\partial Z} \theta^f_1(t, \tildex, y) \varphi(t,\tildex) \psi(y)
    \cdot n_Z \di{y} \di{\sigma_{\tildex}} \di{t}.
\end{multline}
Given the properties of $\psi$ and the periodicity of $\theta^f_1$, the last two integrals vanish.
In addition, seeing that the first term of the right-hand side of \cref{eq:homogenization_thermo2} cancels with the left-hand side of \cref{eq:homogenization_thermo}, we obtain 
\begin{equation*}
    \int_S \int_{\SigmaIn}\theta^f(t, \tildex) \varphi(t, \tildex) \int_{Z} \psi(y)
    \di{y} \cdot n_\Sigma \di{\sigma_{\tildex}} \di{t} = 0
\end{equation*}
for all $\psi \in C_\#^\infty(Z)^d$ with $\dive_y{\psi}=0$ in $Z$ and $\psi = 0$ on $\GammaD$.
Choosing any such $\psi$ with $\int_{Z} \psi(y)
    \di{y} \cdot n_\Sigma\neq0$, we conclude
\begin{equation*}
    \int_S \int_{\SigmaIn}\theta^f(t, \tildex) \varphi(t, \tildex) \di{\sigma_{\tildex}} \di{t} = 0.
\end{equation*}
By density, this holds for all $\varphi \in L^2(S; H^1_{\SigmaOut}(\Sigma))$, which implies $\theta^f = 0$ almost everywhere on $S \times \SigmaIn$ (note that $\varphi$ is not restricted on $\SigmaIn$). 
Combining \cref{eq:homogenization_thermo,eq:homogenization_thermo2}, we arrive at the following weak form as the limit of \cref{eq:weak_form_micro}:
\begin{equation}\label{eq:weak_formulation_temp_limit}
    \begin{split}
        &\langle\partial_t\theta^s,\varphi\rangle_{L^2(S; H^1(\Omega)')}
        +|Z|\langle\partial_t\theta^f,\psi\rangle_{L^2(S; H^1_{\SigmaIn}(\Sigma)')}
        +\kappa^s(\nabla\theta^s,\nabla\varphi)_{S\times\Omega}
        +(\tilde{\kappa}\nabla_{\tilde{x}}\theta^f,\nabla_{\tilde{x}}\psi)_{S\times\Sigma}
        \\
        &+(\bar{u}\cdot\nabla_{\tilde{x}}\theta^f,\psi)_{S\times\Sigma}
        +\alpha|\Gamma|(\theta^s-\theta^f,\varphi-\psi)_{S\times\Sigma}
        =(\bar{f}^s,\varphi)_{S\times\Sigma} 
        + (\bar{f}^f,\psi)_{S\times\Sigma}
    \end{split}
\end{equation}
which holds for all $(\varphi,\psi)\in L^2(S;H^1(\Omega)\times H^1_{\SigmaIn}(\Sigma))$.
Here, the entries of $\tilde{\kappa}$ and $\bar{f}^r$ ($r=f,s$) are given by
\begin{equation*}
    \tilde{\kappa}_{ij}= \int_{Z} \kappa^f (\delta_{ij} + \nabla_y \omega_i \cdot e_j) \di{y},\qquad
    \bar{f}^{r}(t,\tilde{x})=\int_\Gamma f^{r}(t,\tilde{x},y)\di{\sigma_y}.
\end{equation*}

\subsubsection{Limit procedure for the Stokes system.}\label{limit_procedure_stokes}
Shifting to the Stokes system, we observe that our setup is somewhat close to fluid flows in porous media.
The following steps are therefore similar to the classical derivation of the Darcy equation, see \cite{Allaire1989, Alouges2016}.
There is also extensive literature considering thin porous layers, which is even closer to the present setup, for example, \cite{Fabricius2023, Jager1998, Gahn2022Stokes}.
Nevertheless, to better understand the final homogenized model, we briefly present the procedure and point out any differences compared to deriving a Darcy equation.

We start by recalling that the $\e$-solutions $(u_\e,p_\e)\in V_\e\times Q_\e$ satisfy
\begin{equation}\label{eq:WeakStokes}
    (\mu(\theta_\e^f) \nabla u_\e , \nabla \varphi)_{\OmegaIdx{f}} 
        - (p_\e, \dive{\varphi})_{ \OmegaIdx{f}} = 0
\end{equation}
for all $\varphi\in H_0^1(\OmegaIdx{f})$ and almost all $t\in S$.
Let $\varphi \in C_c^\infty(\Sigma;C_\#^\infty(Z))^d$ with $\varphi = 0$ on $\Sigma\times\Gamma_d$.
We test the weak form \cref{eq:WeakStokes} with $\varphi_\e\in H_0^1(\OmegaIdx{f})^d$ given by $\varphi_\e(x)=\e^k \varphi(\tildex, \nicefrac{x}{\e})$ and get (for almost all $(t\in S)$)
\begin{equation*}
        \left(\mu(\theta_\e^f) \nabla u_\e, \left[\e^k \nabla_{\tildex} \varphi
        +\e^{k-1}\nabla_y \varphi\right](\tildex, \nicefrac{x}{\e})\right)_{\OmegaIdx{f}} 
        - \left(p_\e, \left[\e^k\dive_{\tildex}{\varphi}
        +\e^{k-1}\dive_y{\varphi}\right](\tildex, \nicefrac{x}{\e})\right)_{\OmegaIdx{f}} 
        = 0.    
\end{equation*}
We set $k=2$ and pass to the limit $\e \to 0$ with $\e^2p_\e\twosc\chi_Zp$ (see \cref{pro:existence_limit}.$(iv)$ and note that an additional $\e^{-1}$ is consumed via the two-scale limit, see \cref{def:two_scale}) to get
\begin{equation*}
    (p(t,\tildex,y), \dive_y{\varphi}(\tildex,y))_{\Sigma\times Z} = 0
\end{equation*}
which, as this holds for all $\varphi\in C_c^\infty(\Sigma;C_\#^\infty(Z))^d$ with $\varphi=0$ on $\Sigma\times\GammaD$, implies that $p$ is constant in $y$.

Next, we set $k=1$ and restrict testing to functions for which, additionally, $\dive_y{\varphi} = 0$ holds.
The corresponding limit yields 
\begin{equation}\label{eq:helper_p_equal_q}
    (\mu(\theta^f) \nabla_yu , \nabla_y \varphi)_{\Sigma\times Z} - (p , \dive_{\tildex} \varphi)_{\Sigma\times Z}  = 0
\end{equation}
where we used $\mu(\theta_\e^f)\twosc\chi_Z\mu(\theta_\e^f)$ (\cref{corrollary:mu}), $\e\nabla u_\e\twosc \chi_Z\nabla_yu$ (\cref{pro:existence_limit}.$(iii)$), and $\e^2p_\e\twosc\chi_Zp$ (\cref{pro:existence_limit}.$(iv)$).
\Cref{eq:helper_p_equal_q} must, by density, also hold for all $\varphi \in L^2(\Sigma; H^1_{\#,\GammaD}(Z))$ with $\dive_{\tildex}{\left(\int_Z \varphi\di{y} \right)} \in L^2(\Sigma)$.

Next, let us introduce
\begin{equation*}
    V = \left\{\varphi \in L^2(\Sigma; H^1_{\#}(Z))^d : 
    \, 
    \begin{array}{c}
         \dive_y{\varphi} = 0, \, \dive_{\tildex}{\left(\int_Z \varphi\di{y} \right)} =0, \,
         \\
         \varphi = 0 \text{ on } \GammaD\text{ and } \int_Z \varphi \di{y} \cdot n_\Sigma = 0 \text{ on } \partial \Sigma 
    \end{array}
    \right\}.
\end{equation*}
Naturally, \cref{eq:helper_p_equal_q} also holds for every $\varphi\in V$.
With the results from \cite[Lemma 5.3 and Remark 5.5.$(i)$]{Fabricius2023}, the orthogonal of $V$ is characterized by
\begin{equation*}
    V^\perp = \left\{q = \nabla_{\tildex}q_0 + \nabla_y q_1 :\,  q_0 \in H^1(\Sigma)/\R, \, q_1 \in L^2_0(\Sigma \times Z) \right \} \subset L^2(\Sigma; H^1_{\#,\GammaD}(Z)')^d.
\end{equation*}
This decomposition into $(q_0, q_1)$ is also unique for every $q\in V^\perp$.
Therefore, there exists a $q = (q_0, q_1) \in L^2(S; V^\perp)$ such that $\mu(\theta^f) \Delta_y u = \nabla_{\tildex}q_0 + \nabla_y q_1$ (for almost all $t\in S$) understood as an element in $L^2(\Sigma; H^1_{\#,\GammaD}(Z)')^d$.
With \cref{eq:helper_p_equal_q} it follows that $q_0=p$ almost everywhere up to a constant.

Collecting the boundary conditions of $u$ derived in \cref{eq:effecitve_boundary_u} and \cref{pro:existence_limit} as well as the above characterization, we see that
\[
(\bar{u},u,p,q_1)\in L^2(S;H^1(\Sigma)\times  L^2(\Sigma;H^1(Z))\times H^1(\Sigma)/\R \times L^2_0(\Sigma \times Z))
\]
are weak solutions to the following two-pressure Stokes model:
\begin{subequations}
\begin{alignat}{2}
    -\mu(\theta^f) \Delta_y u + \nabla_y q_1  &= - \nabla_{\tildex}p \quad &&\text{ in } S\times\Sigma \times Z,\label{eq:2pressureStokes:1} 
    \\
    \dive_y{u} &= 0 &&\text{ in } S\times\Sigma \times Z,\\
        u &= 0  &&\text{ on } S\times\Sigma \times \Gamma,
    \\
    u &= u_\text{motion} &&\text{ on } S\times\Sigma \times \{y_d = 0\},
    \\
    \bar{u}&=\int_Zu\di{y}&&\text{ on } S\times\Sigma,\\
    \dive_{\tildex}\bar{u} &= 0 &&\text{ in } S\times\Sigma ,
    \\
    \Bar{u}(t, \tildex) \cdot n_\Sigma &= \int_0^1 \uBC(\tildex, s) \di{s} \cdot n_\Sigma \quad &&\text{ on } S\times \partial \Sigma.
\end{alignat}
\end{subequations}
Note that both $u_\text{motion} \cdot e_d = 0$ and Assumption \ref{item:A5} (which guarantees $\int_{\partial \Sigma} n_\Sigma \cdot \int_0^1 \uBC = 0$) are essential to ensure that the boundary conditions are compatible with the incompressibility requirements.

We can further simplify the two-pressure Stokes-problem by constructing suitable cell problems.
The first set of cell problems are quite standard, see \cite{Allaire1989} where they are used to characterize the permeability in a porous medium: for $i=1,\dots, d-1$, find $(\xi_i, \eta_i) \in H^1_{\#}(Z)^d \times L^2_{0, \#}(Z)$ such that 
\begin{alignat}{2}\label{eq:cell_problem_stokes}
    -\Delta_y \xi_i - \nabla_y \eta_i &= e_i \quad &&\text{in } Z, 
    \notag 
    \\
    \dive_y{(\xi_i)} &= 0 &&\text{in } Z,
    \\
    \xi_i &= 0 &&\text{on } \GammaD. 
    \notag
\end{alignat}
Additionally, a cell problem originating from the nonhomogeneous Dirichlet condition at the bottom boundary appears: find $(\xi_0, \eta_0) \in H^1_{\#}(Z)^d \times L^2_{0, \#}(Z)$ such that 
\begin{alignat}{2}\label{eq:cell_problem_stokes_bc_movement}
    -\Delta_y \xi_0 - \nabla_y \eta_0 &= 0 \quad &&\text{in } Z, 
    \notag 
    \\
    \dive_y{(\xi_0)} &= 0 &&\text{in } Z,
    \\
    \xi_0 &= 0 &&\text{on } \Gamma, \notag
    \\ 
    \xi_0 &= u_\text{motion} \quad &&\text{on } \{y_d = 0\}. \notag
\end{alignat}
Note, the cell problem (\ref{eq:cell_problem_stokes_bc_movement}) is a well-defined equation and admits a unique solution, given that $u_\text{motion} \cdot e_d = 0$.
We have the following property of the cell solutions:
\begin{proposition}\label{pro:direction_d_zero}
    For the cell solutions $\xi_i\in H^1_{\#}(Z)^d$ ($i=0,\dots,d-1$) of \cref{eq:cell_problem_stokes} and \cref{eq:cell_problem_stokes_bc_movement}, respectively, it holds 
    \begin{equation*}
        \int_Z \xi_i \cdot e_d \di{y}= 0.
    \end{equation*}
\end{proposition}
\begin{proof}
    We present the arguments for $\xi_0$; the proof is the same for the other functions.
    A direct computation, including an integration by parts, yields
    \begin{equation*}
        0 = \int_Z \dive_y{(\xi_0)} y_d \di{y} = \int_Z \xi_0 \cdot e_d \di{y} + \int_{\partial Z} \xi_0 \cdot n  \, y_d \di{\sigma_y}.
    \end{equation*}
    By the choice of boundary conditions and periodicity, the above integral over $\partial Z$ vanishes and we obtain the desired result.
\end{proof}
Given the solutions of the cell problems (\ref{eq:cell_problem_stokes}) and (\ref{eq:cell_problem_stokes_bc_movement}), we obtain for a.a.~$(t, \tildex, y) \in S \times \Sigma \times Z$ the representations 
\begin{align}\label{eq:identiy_with_cell_solutions}
    u(t, \tildex, y) = \xi_0(y) - \sum_{i=1}^{d-1} \frac{\xi_i(y)} {\mu(\theta^f(t, \tildex))} \partial_{x_i} p(t, \tildex)
    , \quad  q_1(t, \tildex, y) = \eta_0(y) - \sum_{i=1}^{d-1} \frac{\eta_i(y)} {\mu(\theta^f(t, \tildex))} \partial_{x_i} p(t, \tildex).
\end{align}
This can be checked easily by plugging them into the two-pressure Stokes system.
Looking at $\bar{u}=\int_Z u \di{y}$, we can also check that with $\dive_{\tildex}\bar{u}=0$ and by testing \cref{eq:2pressureStokes:1} with $\psi \in H^1(\Sigma)$ while using \cref{eq:identiy_with_cell_solutions}:
\begin{equation}\label{eq:weak_formulation_fluid_limit}
    \begin{split}
        0 &= \left(\dive_{\tildex}{\bar{u}}, \psi \right)_{\Sigma} 
        = -\left(\bar{u}, \nabla_{\tildex} \psi \right)_{ \Sigma} + \left(\bar{u} \cdot n_\Sigma,\psi \right)_{\partial \Sigma}
        \\
        &= -\left(\int_Z \xi_0 \di{y}, \nabla_{\tildex} \psi \right)_{\Sigma}
        + \sum_{i=1}^{d-1}\left(\frac{\partial_i p}{\mu(\theta^f)}\int_Z \xi_i \di{y}, \nabla_{\tildex} \psi \right)_{\Sigma}
        + \left(\left( \int_0^1 \uBC(\cdot, s) \di{s}\right) \cdot n_\Sigma,\psi \right)_{\partial \Sigma}
        \\
        &= -(\bar{\xi}_0, \nabla_{\tildex} \psi )_{ \Sigma} + \left(\frac{1}{\mu(\theta^f)} K \nabla_{\tildex} p, \nabla_{\tildex} \psi \right)_{\Sigma} + (\bar{u}_\text{BC} \cdot n_\Sigma, \psi)_{\partial \Sigma}.
    \end{split}
\end{equation}
Where for the last step we defined
\begin{equation}\label{eq:effective_}
    \bar{\xi}_0 \coloneqq \int_Z \xi_0(y) \di{y},
    \quad
    K_{ij} \coloneqq \int_Z \xi_j(y) \cdot e_i \di{y} 
    \quad \text{and} \quad
    \bar{u}_\text{BC} \coloneqq\int_0^1 \uBC(\cdot, s) \di{s}.
\end{equation}
Note, \cref{eq:weak_formulation_fluid_limit} is similar to a Darcy equation, but with an additional term induced by the Dirichlet condition at $y_d = 0$.
This is precisely the weak form of the Darcy-limit given in \cref{thm:homogenized_model}.

\section{Simulations}\label{sec:simulations}
In addition to the previous analysis, we investigate the properties of the effective model with the help of numerical simulations and examine what aspects can be captured in the homogenized problem.
In particular, we are looking at the effects of the roughness patterns, viscosity function, and inflow conditions.
We also study the accuracy of the homogenization limit by direct comparison with the $\e$-model for small values of $\e$.
Note that we focus on the two-dimensional case for this comparison as the computational cost for the $\e$-problem is prohibitively high in the case $d=3$.

All simulations are carried out with the finite element solver FEniCS \cite{Fenics} and meshes are created with Gmsh \cite{gmsh}.
To stabilize the convection term in our model, a SUPG scheme \cite{BrooksS82} is applied.
We use continuous, piecewise linear finite elements to represent temperature fields $(\theta^f, \theta^s)$ and pressure $p$, while the velocity $u$ is modeled with continuous piecewise quadratic elements.
The time stepping is performed with a semi-implicit Euler scheme, where we linearize the convection term by using the velocity field from the previous time step.
Our implementation is freely available online \cite{Code}.

We start by collecting all the parameters that are fixed in the following simulations:
We consider the time interval $S = (0, 5)$, and the macroscopic domain is the unit square $\Omega = (0,1)^2$.
The conductivities are set to $\kappa^s = 0.5$ and $\kappa^f=0.1$, and the heat exchange parameter to $\alpha = 1.0$. 
The initial conditions are set to zero.
At $\Sigma$, the Dirichlet condition of the velocity is set to $u_\text{motion}=(1, 0)$.
The heat sources are given by $f^f_\e=0$ and 
\begin{equation*}
    f_\e^s(x) = \frac{1 - \nicefrac{x_d}{\e}}{1 - \gamma_0},
\end{equation*}
so that there is a higher production at the tip of the roughness, which is motivated by the underlying application of grinding and the fluid only heats up through exchange with the Robin condition. The temperature-dependent viscosity is modeled by a Vogel-Fulcher-Tammann equation:
\begin{equation*}
    \mu(\theta) = \mu_0 \exp{\left(\frac{\alpha}{\theta - T_0}\right)},
\end{equation*}
which is a reasonable approach for fluids \cite{Garca-Coln1989}.
To obtain a viscosity profile that varies in our temperature range, we choose $\mu_0=0.2$, $\alpha=3.0$, and $T_0=0.6$.
Unless stated otherwise, the time step $\Delta t = 0.05$ and the mesh resolution $h= 0.03$ are used, where the mesh is locally refined around the thin layer with resolution $h\e$.
A convergence study was carried out with respect to $\Delta t$ and $h$ and the expected linear or quadratic convergence was observed.
To study the influence of different roughness patterns, we model the solid-fluid interface by the graph of a periodic function $\gamma: Y^{d-1} \to [\gamma_0, 1]$, meaning $\Gamma = \{(\tildex, \gamma(\tildex)): \tildex \in Y^{d-1}\}$. 
We investigate two different setups:
\begin{itemize}
    \item A sine function 
        \begin{equation*}
            \gamma_\text{sine}(x_1; \gamma_0) = 
                \begin{cases}
                    1 \quad &\text{, if } x < 0.1 \text{ or } x > 0.9, \\
                    1 - \frac{1-\gamma_0}{2}\left( \sin{\left(2\pi \frac{(x_1-0.1)}{0.8}- \frac{\pi}{2}\right)}+1\right) &\text{, if } x \in [0.1, 0.9].
                \end{cases}
        \end{equation*}
    \item A pattern given by smoothed rectangles
        \begin{equation*}
            \gamma_\text{rect}(x_1; \gamma_0) = 
                \begin{cases}
                    1 \quad &\text{, if } x < 0.1 \text{ or } x > 0.9, \\
                    1 - (1 - \gamma_0)\left(-2\left(\frac{x_1-0.1}{0.1}\right)^3 + 3\left(\frac{x_1-0.1}{0.1}\right)^2\right) \quad &\text{, if } x \in [0.1, 0.2), \\
                    \gamma_0 \quad &\text{, if } x \in [0.2, 0.8], \\
                    \gamma_0 + (1 - \gamma_0)\left(-2\left(\frac{x_1-0.8}{0.1}\right)^3 + 3\left(\frac{x_1-0.8}{0.1}\right)^2\right) \quad &\text{, if } x \in (0.8, 0.9].
                \end{cases}
        \end{equation*}        
\end{itemize}
The reference cell $Z$ as well as the effective parameters for both roughness patterns, and three different $\gamma_0$, are stated in \cref{tab:effective_parameters}.
\begin{table}[ht]
    \centering
    \caption{Roughness patterns used in the numerical simulations, and the corresponding effective parameters. Since the setup is two-dimensional, all effective parameters reduce to scalar values.}
    \begin{tabular}{|c|c|c|c|}
    	 \hline
          \diagbox[width=\dimexpr \textwidth/8+2\tabcolsep\relax, height=1cm]{$Z$}{$\gamma_0$} & 0.1 & 0.5 & 0.9 \\
         \hline
         &&& 
         \\[\dimexpr-\normalbaselineskip+2pt]
         \noindent\parbox[c]{20mm}{\includegraphics[width=20mm, height=20mm]{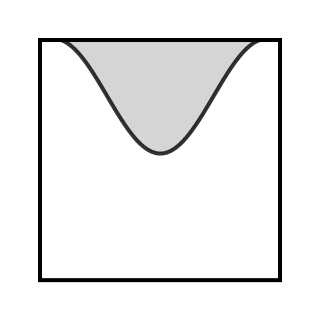}}
         & $\begin{array}{c}
              \Tilde{\kappa} = 0.238 \, \kappa^f \\
               K = 0.00048 \\
               \Bar{\xi}_0 = 0.0692 
         \end{array}$
         & $\begin{array}{c}
              \Tilde{\kappa} = 0.630 \, \kappa^f \\
               K = 0.01704 \\
               \Bar{\xi}_0 = 0.2871
         \end{array}$ 
         & $\begin{array}{c}
              \Tilde{\kappa} = 0.951 \, \kappa^f \\
               K = 0.07025 \\
               \Bar{\xi}_0 = 0.4716
         \end{array}$
         \\
        \hline
         &&& 
         \\[\dimexpr-\normalbaselineskip+2pt]
         \noindent\parbox[c]{20mm}{\includegraphics[width=20mm, height=20mm]{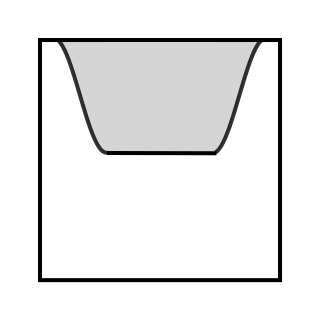}}
         & $\begin{array}{c} 
              \Tilde{\kappa} = 0.120 \, \kappa^f \\
               K = 0.00011 \\
               \Bar{\xi}_0 = 0.0526 
         \end{array}$
         & $\begin{array}{c} 
              \Tilde{\kappa} = 0.523 \, \kappa^f \\
               K = 0.01124 \\
               \Bar{\xi}_0 = 0.2556
         \end{array}$ 
         & $\begin{array}{c} 
              \Tilde{\kappa} = 0.917 \, \kappa^f \\
               K = 0.06292 \\
               \Bar{\xi}_0 = 0.4552
         \end{array}$
    \\\hline
    \end{tabular} 
    \label{tab:effective_parameters}
\end{table}

Three different inflow patterns are compared: a linear inflow, a quadratic inflow, and a linear inflow that is not zero above $\gamma_0$ given by
\begin{align*}
    &(i)\ \ u_\text{BC, lin} = \begin{cases}
    \left(1 - \frac{x_2}{\gamma_0}\right) u_\text{motion} &\text{, if } x_2 \leq \gamma_0, \\
    0 &\text{, if } x_2 > \gamma_0,
    \end{cases}
    \\
    &(ii)\ \  u_\text{BC, quad} = \begin{cases}
    \left(1 - \left(\frac{x_2}{\gamma_0}\right)^2 \right)u_\text{motion} &\text{, if } x_2 \leq  \gamma_0, \\
    0 &\text{, if } x_2 > \gamma_0,
    \end{cases}    
    \\
    &(iii)\ \ u_\text{BC, lin, 2} = (1 - x_2) u_\text{motion}.
\end{align*}
The effective inflow then equals $\Bar{u}_\text{BC, lin} = \frac{1}{2}\gamma_0$, $\Bar{u}_\text{BC, quad} = \frac{2}{3}\gamma_0$ or $\Bar{u}_\text{BC, lin, 2} = \frac{1}{2}$.
With $u_\text{BC, lin, 2}$ we construct an inflow that does not fulfill \ref{item:A5} (the inflow is nonzero also for $x_2 > \gamma_0$). Hence, we cannot use a simple scaling argument 
to build an extension of the boundary condition to the whole domain.
However, we observe that the effective model also works well for this larger class of functions.
If not specified otherwise, we use $u_\text{BC, lin}$ in the following simulation studies.
\begin{remark}
    Please note that the overall simulation scenarios are motivated by the grinding application.
    However, these simulation experiments are only qualitative, as we are not working with realistic and sensible parameter values for the conductivities, the heat source, etc., and instead only highlight the behavior of the effective model.
    We refer to \cite{Wiesener2023} where quantitative simulations using realistic material parameters were performed for a single-grain scenario. 
\end{remark}

\textbf{Comparison between $\e$-model and effective model}. We start by comparing the solutions to the effective model with solutions of the resolved $\e$-model (where we compare with $\e=0.2, 0.1, 0.05, 0.01$) for the sinusoidal roughness pattern and $\gamma_0 = 0.5$.
We present a general overview of the solution in \cref{fig:temperature_eps} and \cref{fig:fluid_pressure_eps}. Note that we multiply the microscale pressure $p_\e$ with $\e^2$, since convergence only holds for $\e^2 p_\e$.
In the following, we focus mainly on comparing the solutions inside the thin layer, as this is the critical area where the interesting features (rough boundary and thin domain) are present.

We observe that the effective model captures the temperature and pressure of the resolved $\e$-model fairly well (\cref{fig:temperature_eps} and \ref{fig:fluid_pressure_eps}). To better observe the microscopic fluctuations of temperature and pressure as well as the effective profile of the effective solution, the micro solution is visualized along the line $(0, 1) \times \{\nicefrac{\e}{2}\}$ in \Cref{fig:eps_temp_pressure} and compared with the effective solution. 
We obtain negative pressure at the inflow ($x_1=0$) since the amount of fluid pushed into the domain by $u_{BC, lin}$ is less than the fluid movement induced by the bottom boundary.
This becomes also apparent, by comparing $\bar{u}_\text{BC, lin}$ with the flow induced by the cell problem (\ref{eq:cell_problem_stokes_bc_movement}), which gives the value $\Bar{\xi}_0 = 0.29$ (see \cref{tab:effective_parameters}).
\begin{figure}[ht]
    \centering
    \begin{subfigure}[b]{0.22\textwidth}
         \centering
         \includegraphics[width=\textwidth]{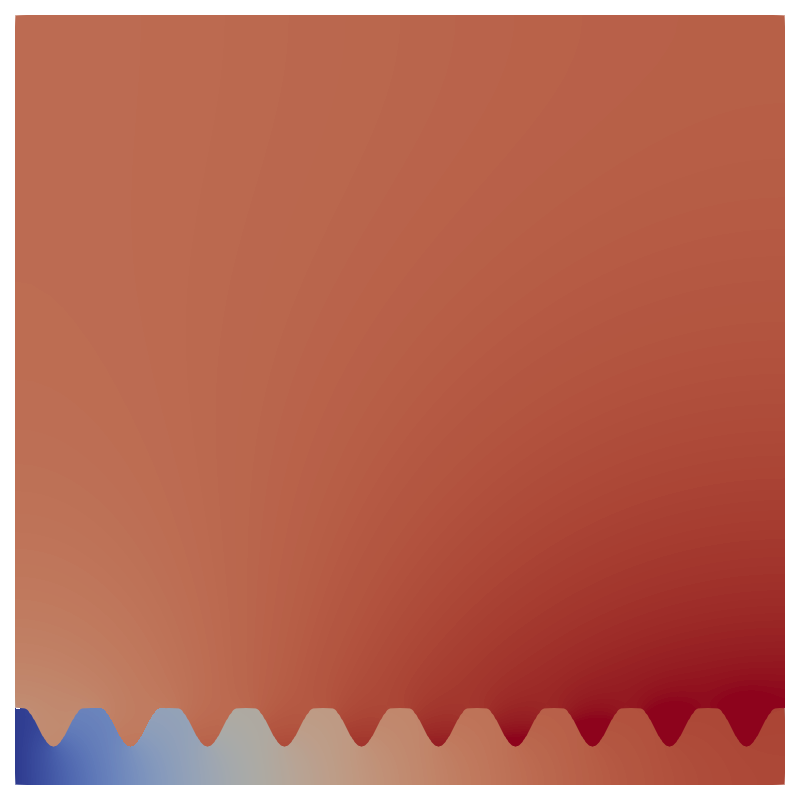}
         \caption{$\e = 0.1$}
    \end{subfigure}
    \hfill
    \begin{subfigure}[b]{0.22\textwidth}
         \centering
         \includegraphics[width=\textwidth]{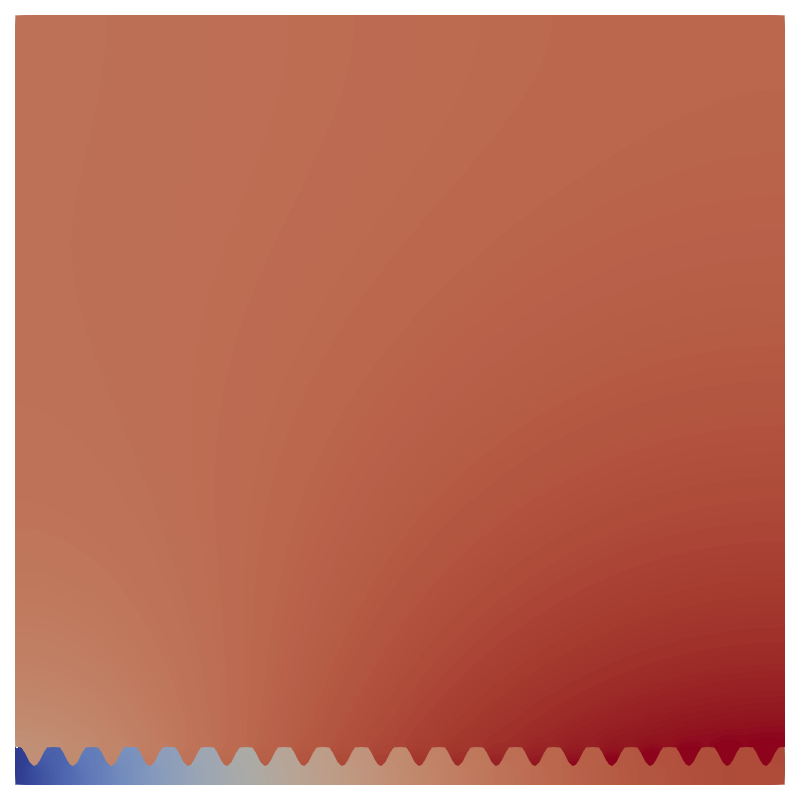}
         \caption{$\e = 0.05$}
    \end{subfigure}
    \hfill
    \begin{subfigure}[b]{0.22\textwidth}
         \centering
         \includegraphics[width=\textwidth]{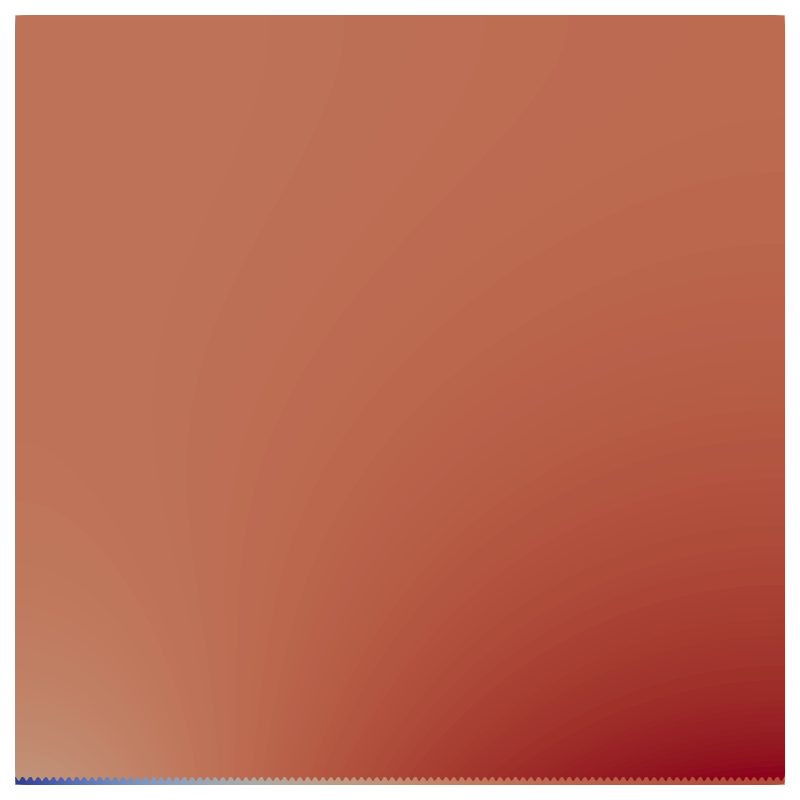}
         \caption{$\e = 0.01$}
    \end{subfigure}
    \hfill
    \begin{subfigure}[b]{0.22\textwidth}
         \centering
         \includegraphics[width=\textwidth]{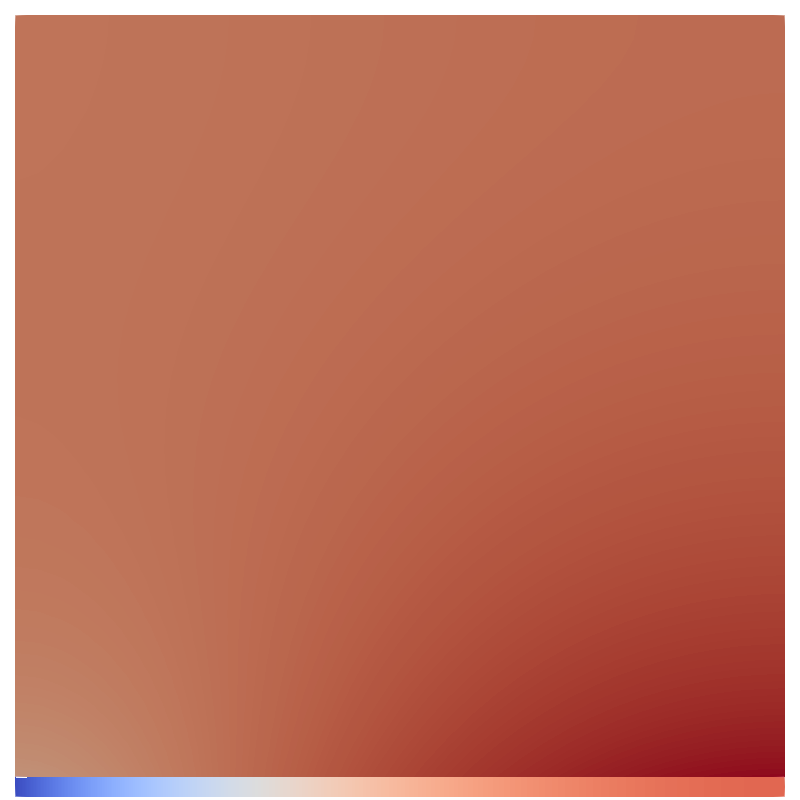}
         \caption{Effective}
    \end{subfigure}
    \hfill
    \begin{subfigure}[b]{0.06\textwidth}
         \centering
         \includegraphics[width=\textwidth]{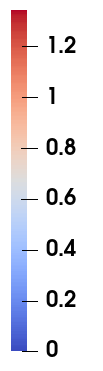}
         \vspace{2pt}
    \end{subfigure}
    \caption{Comparison of the temperature profiles, at $t=5$, for the micro and effective model, with sinusoidal roughness. The effective fluid temperature $\theta^f$ is only defined on the line $(0, 1) \times \{0\}$, but for visualization purposes, the line is being shown with a thickness.}
    \label{fig:temperature_eps}
\end{figure}
\begin{figure}[ht]
    \centering
    \begin{subfigure}[b]{0.22\textwidth}
         \centering
         \includegraphics[width=\textwidth]{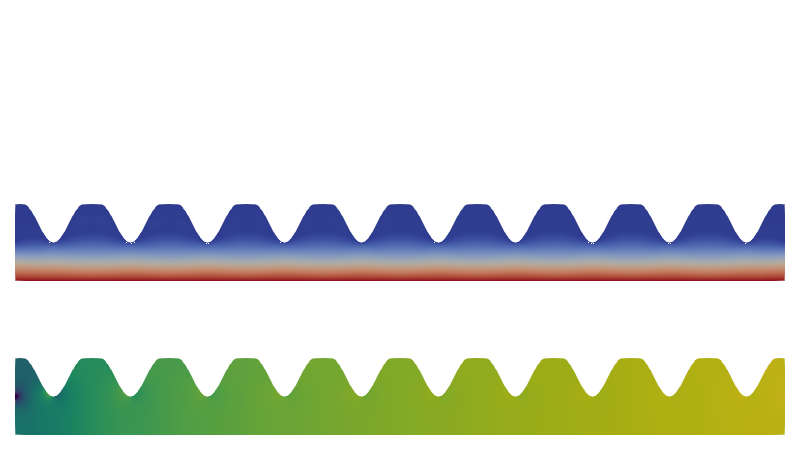}
         \caption{$\e = 0.1$}
    \end{subfigure}
    \hfill
    \begin{subfigure}[b]{0.22\textwidth}
         \centering
         \includegraphics[width=\textwidth]{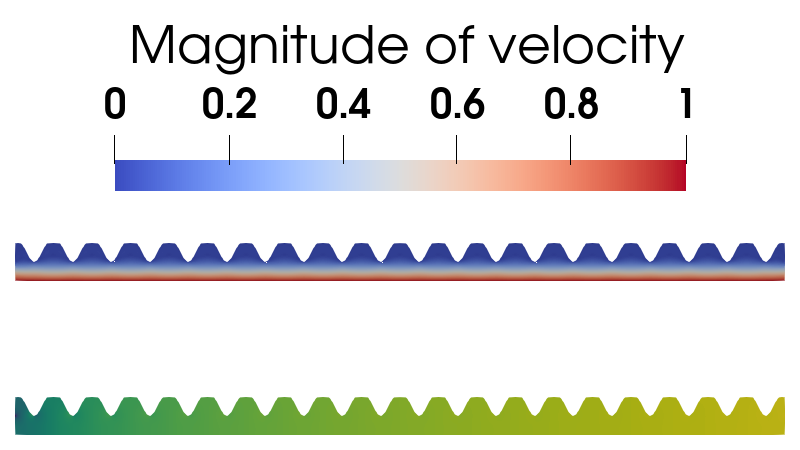}
         \caption{$\e = 0.05$}
    \end{subfigure}
    \hfill
    \begin{subfigure}[b]{0.22\textwidth}
         \centering
         \includegraphics[width=\textwidth]{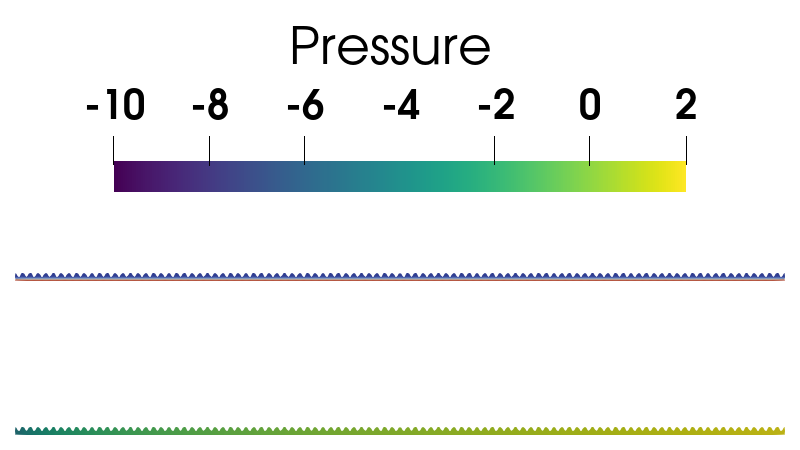}
         \caption{$\e = 0.01$}
    \end{subfigure}
    \hfill
    \begin{subfigure}[b]{0.22\textwidth}
         \centering
         \includegraphics[width=\textwidth]{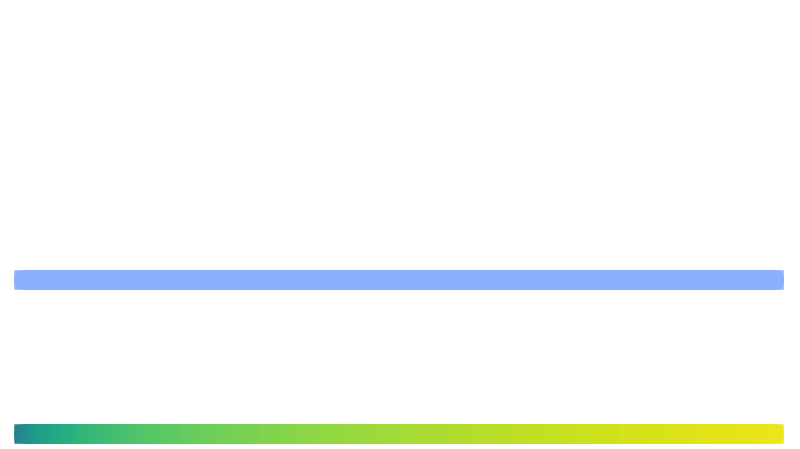}
         \caption{Effective}
    \end{subfigure}
    \hfill
    \caption{Comparison of the fluid (top) and pressure (bottom) profiles, at $t=5$, for the micro and effective model, with sinusoidal roughness. The line in the effective case is again visualized with a thickness.}
    \label{fig:fluid_pressure_eps}
\end{figure}
\begin{figure}[ht]
    \centering
    \includegraphics[width=\linewidth]{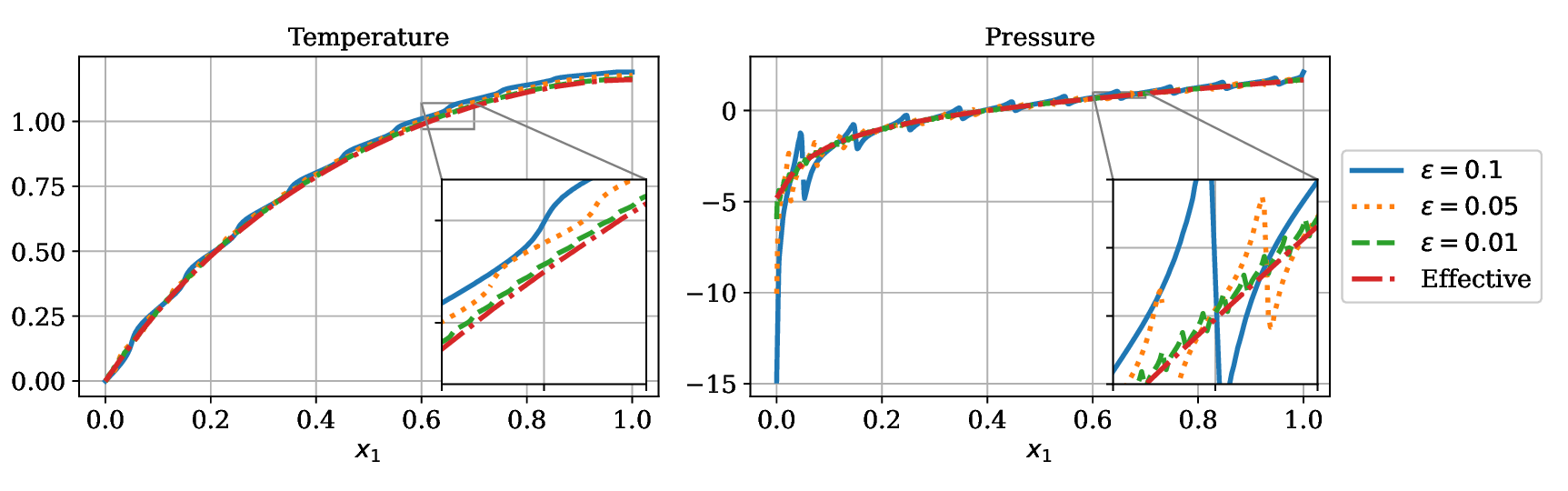}
    \caption{Comparison of the effective solutions with the microscopic temperature and pressure inside the layer. The micro solution is plotted along the line $(0, 1) \times \{\nicefrac{\e}{2}\}$. All solutions are shown at the final time step $t=5$.}
    \label{fig:eps_temp_pressure}
\end{figure}

We also want to study numerically the convergence behavior for $\e \to 0$ explicitly and not only compare visually. 
The solutions, however, are defined on different domains (the effective solutions are defined on a line and the $\e$-solutions on a thin domain), but given the results in \Cref{sec:homogenization}, the functions in the thin layer can be approximated by the effective solutions via 
\begin{equation}\label{eq:sim_reconstruction}
    \begin{split}
        \thetaIdx{f}(t, x) \approx \theta^f(t, &\tildex) + \e \theta^f_1(t, \tildex, \nicefrac{x}{\e}), \quad 
        \e^2 p_\e(t, x) \approx p(t, \tildex) + \e p_1(t, x, \nicefrac{x}{\e})
        \\
        &u(t, x)\approx \xi_0(\nicefrac{x}{\e}) - \frac{\xi_1(\nicefrac{x}{\e})} {\mu(\theta^f(t, \tildex))} \partial_{x_1} p(t, \tildex).
    \end{split}
\end{equation}
The functions $\theta^f_1$ and $p_1$ are defined in \Cref{eq:identiy_with_cell_solutions_temp} and (\ref{eq:identiy_with_cell_solutions}) and can be computed with the cell solutions given by \Cref{eq:cell_problem_temp}, (\ref{eq:cell_problem_stokes}), (\ref{eq:cell_problem_stokes_bc_movement}) and the homogenized solution.
With the reconstruction (\ref{eq:sim_reconstruction}), we compute the difference of the micro solution with the effective solution in the $L^2(S\times \OmegaIdx{f})$ norm.
The results are shown in \cref{fig:eps_convergence}.
For temperature and pressure, we observe a convergence of order $\mathcal{O}(\e^2)$. On the other hand, the velocity converges only with order $\mathcal{O}(\e)$, which is to be expected given that the temperature and pressure reconstruction also include higher order terms compared to the velocity.
Additionally, the temperature in the solid $\thetaIdx{s}$ converges also with order $\mathcal{O}(\e)$ in $L^2(S\times\OmegaIdx{s})$, but is not depicted for clearer visualization. 
\begin{figure}[ht]
    \centering
    \includegraphics[width=0.8\linewidth]{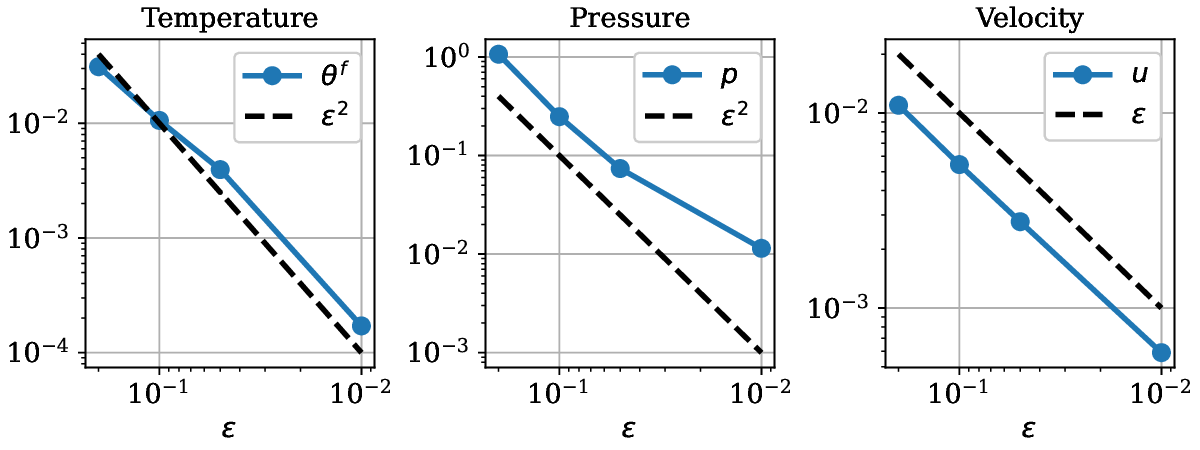}
    \caption{Convergence study for the limit $\e \to 0$. Both axes are on a logarithmic scale.
    The error between microscopic resolved simulations and the effective quantities reconstructed with \Cref{eq:sim_reconstruction} in the $L^2(S\times \OmegaIdx{f})$ norm.
    To better estimate the order of convergence, the black lines show different powers of $\e$.}
    \label{fig:eps_convergence}
\end{figure}

\textbf{Varying the inflow function}. To study how well the effective model captures the influence of the prescribed inflow, we compare different Dirichlet conditions for the roughness given by $\gamma_\text{sine}$ for $\gamma_0=0.5$ and fixed $\e=0.05$.
The resulting temperature, pressure, and velocity profiles at time $t=5$ are visualized in \cref{fig:inflow_compare}. Here we keep the visualization scheme from \Cref{fig:eps_temp_pressure} for the temperature and pressure to highlight the effective solution profile, and not explicitly compare the error in any norm. For the fluid velocity, we compare along the vertical line $\{0.5\} \times (0, \e)$ and use \Cref{eq:sim_reconstruction} to compute the effective profile.
We first observe that, naturally, a higher average fluid flow leads to a cooler system since the heat energy is removed faster from the system.
Furthermore, inflows $u_\text{BC, quad}$ and $u_\text{BC, lin, 2}$ result in a positive pressure at the inflow interface, since the amount of fluid pressed into the domain is larger than the amount moved by the bottom boundary condition.
The homogenized model can capture the influence of the different inflow conditions fairly well with regard to all quantities.
This holds true even for $u_\text{BC, lin, 2}$, which does not directly satisfy the assumptions made in \ref{item:A5}.
\begin{figure}[ht]
    \centering
    \includegraphics[width=\linewidth]{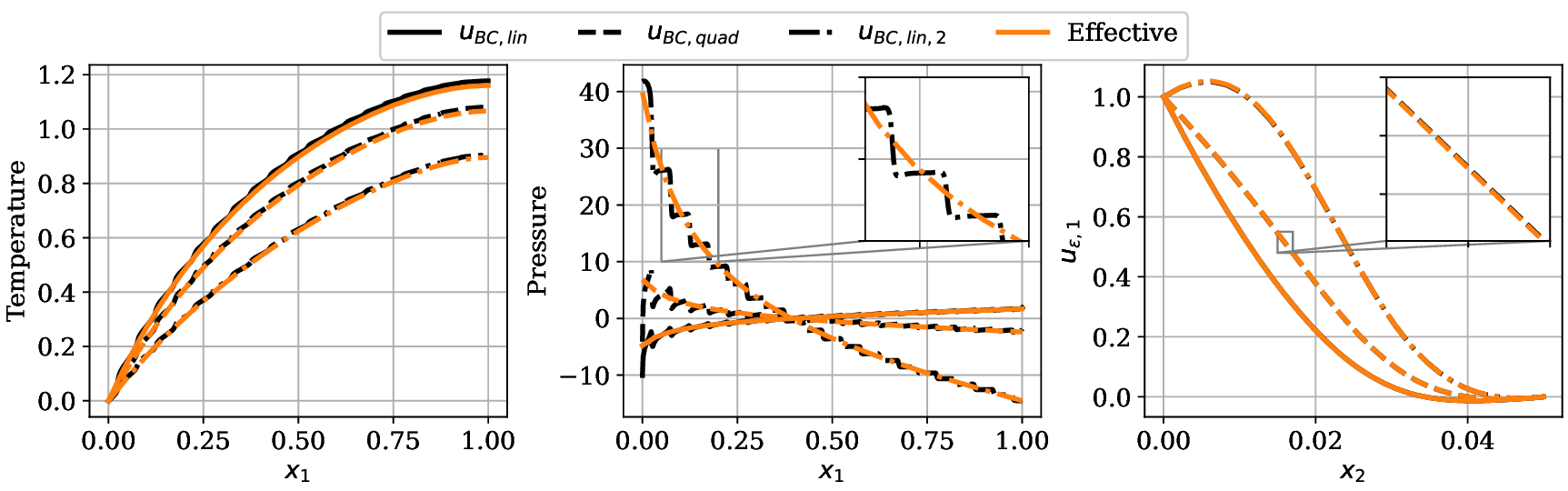}
    \caption{Comparison of the fluid temperature, pressure, and horizontal velocity component for different inflow functions, at $t=5$. In black, the micro solution, and in orange, the effective results. The velocity is compared along $\{0.5\} \times (0, \e)$.}
    \label{fig:inflow_compare}
\end{figure}

\textbf{Varying the grain height $\mathbf{\gamma_0}$}. We investigate the influence of the minimal height of the interface $\gamma_0$.
Here, we again focus on the roughness given by $\gamma_\text{sine}$ and fixed $\e = 0.05$.
We use the inflow function $u_\text{BC, lin, 2}$ as it is independent of $\gamma_0$; this allows us to focus on the influence of the interface height.
Similarly, we modify the heat source to $f_\e = \tfrac{1}{|\Gamma|}$, ensuring that the total amount of heat produced is one and is independent of $\gamma_0$.
The simulated solutions, again at $t=5$, are presented in \cref{fig:gamma_compare}.
As in the results before, the homogenized model shows good agreement with the exact solutions for $\e=0.05$. Given that we plot along the line at $x_2=\nicefrac{\e}{2}$, there appear jumps for the case $\gamma_0=0.1$ since the line is not always inside the domain.
The grain height has a strong influence on the pressure profile; since we have a fixed amount of inflow for all cases, a smaller gap necessarily leads to higher pressures inside the layer. 
At the same time, the fluid velocity in between the "humps" created by the roughness is also reduced, leading to slightly higher temperatures.
\begin{figure}[ht]
    \centering
    \includegraphics[width=\linewidth]{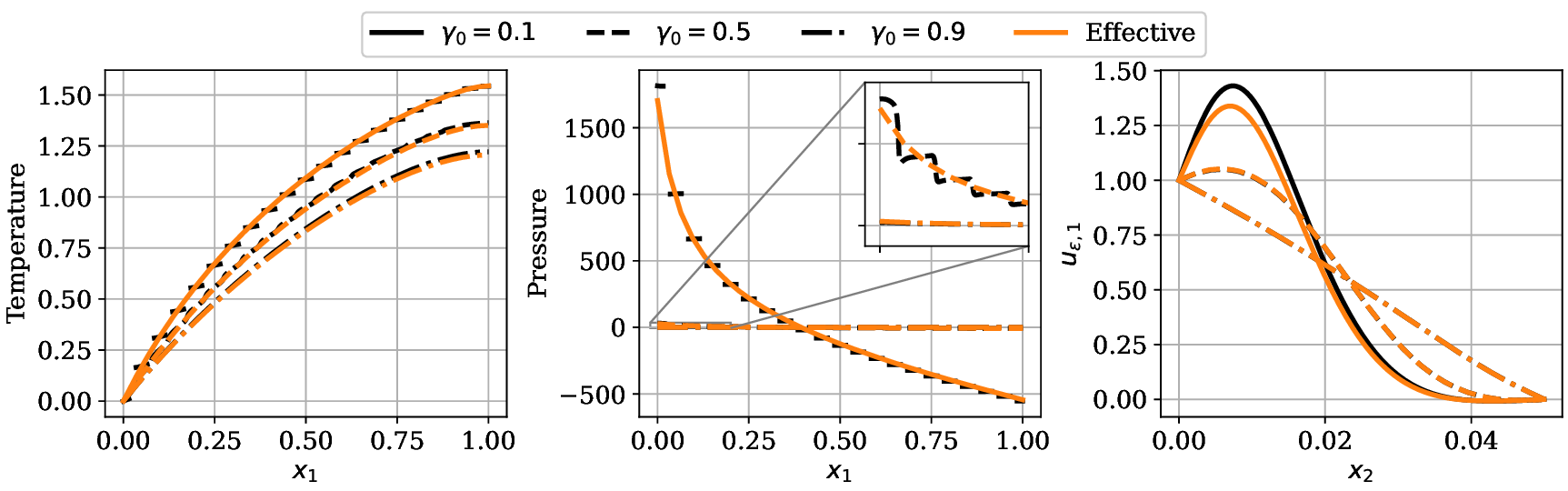}
    \caption{Comparison of the fluid temperature, pressure, and horizontal velocity component for different roughness heights $\gamma_0$, at $t=5$. The solution is visualized as in \cref{fig:inflow_compare}.}
    \label{fig:gamma_compare}
\end{figure}

\textbf{Varying the roughness pattern}. In this simulation scenario for the two-dimensional case, we demonstrate the influence of different roughness patterns.
We utilize the same setup as in the previous case for varying roughness height, but this time we fix $\gamma_0=0.5$ and compare the differences between $\gamma_\text{sine}$ and $\gamma_\text{rect}$.
As before, we present the results at the last time step, in \cref{fig:shape_compare}.
The homogenized model accurately reflects the influence of the roughness pattern on all solution fields.  
For the 2D case, we can conclude that the derived effective model yields a good approximation of the resolved micro model, and the temperature-dependent viscosity only has an impact on the pressure profile.
\begin{figure}[ht]
    \centering
    \includegraphics[width=\linewidth]{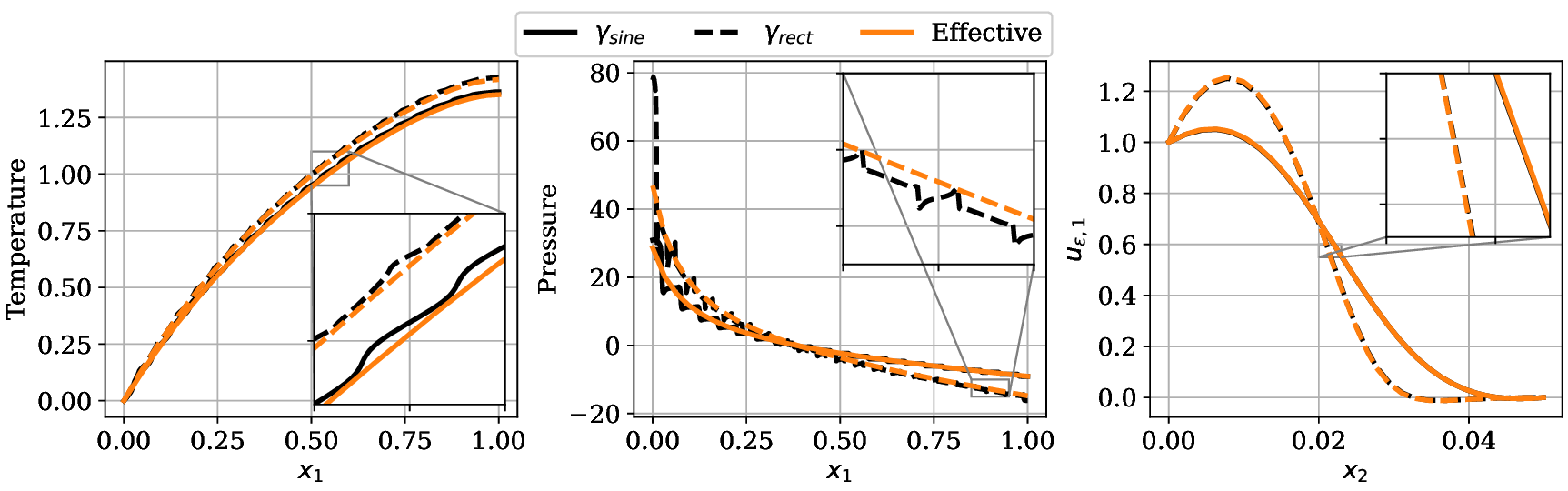}
    \caption{Comparison of the fluid temperature, pressure, and horizontal velocity component for different roughness types, at $t=5$. The solution is visualized as in \cref{fig:inflow_compare}.}
    \label{fig:shape_compare}
\end{figure}

\textbf{3D simulation study}. Lastly, we demonstrate the effects of the temperature-dependent viscosity on the fluid velocity $u$.
For this, a three-dimensional setup is required.
Unfortunately, the $\e$-problems become computationally very expensive in this case as they require a lot of memory.
Therefore, we only simulate the $\e$-model for $\e = 0.1$ (as opposed to the 2D case where we went down to $\e=0.01$).
To increase the effect of temperature-dependent viscosity on the flow field, we set $\kappa^f = 0.01$ and $C=0.4$. Additionally, we consider a local heat source given by
\begin{equation*}
    f_\e^s(x) = \begin{cases}
        1 \quad &\text{, if }  x_2 > 0.5 \text{ and } |x_1 - 0.5| < 0.1, 
        \\
        0 &\text{ else}.
    \end{cases}
\end{equation*}
This right-hand side is chosen in such a way that local variations in the heat production occur, leading to local differences in the viscosity.
The inflow is defined by $u_\text{BC, lin}$ with $\gamma_0=0.5$ and $u_\text{motion}= (1, 0, 0)^T$. For the roughness, we again model the reference cell as a graph and define
\begin{equation*}
    \gamma = 1 - 0.9\left(\cos{(2\pi(x_2 - 0.5)^2)} \cos(2\pi(x_1 - 0.25))\right)^2,
\end{equation*}
so that the effective quantities behave differently in directions $e_1$ and $e_2$. The corresponding effective parameters are collected in \cref{tab:effective_parameters_3d}.
\begin{table}[ht]
    \centering
    \caption{Effective parameters for the three-dimensional simulation. The effective values corresponding to dimension $d$ are not listed, since they are zero.} 
    \begin{tabular}{|c|c|c|c|}
    	 \hline
          $Z$ & $\Tilde{\kappa}$ & $K$ & $\Bar{\xi}_0$ \\
         \hline
         &&& 
         \\[\dimexpr-\normalbaselineskip+2pt]
         \noindent\parbox[c]{25mm}{\includegraphics[width=25mm, height=25mm]{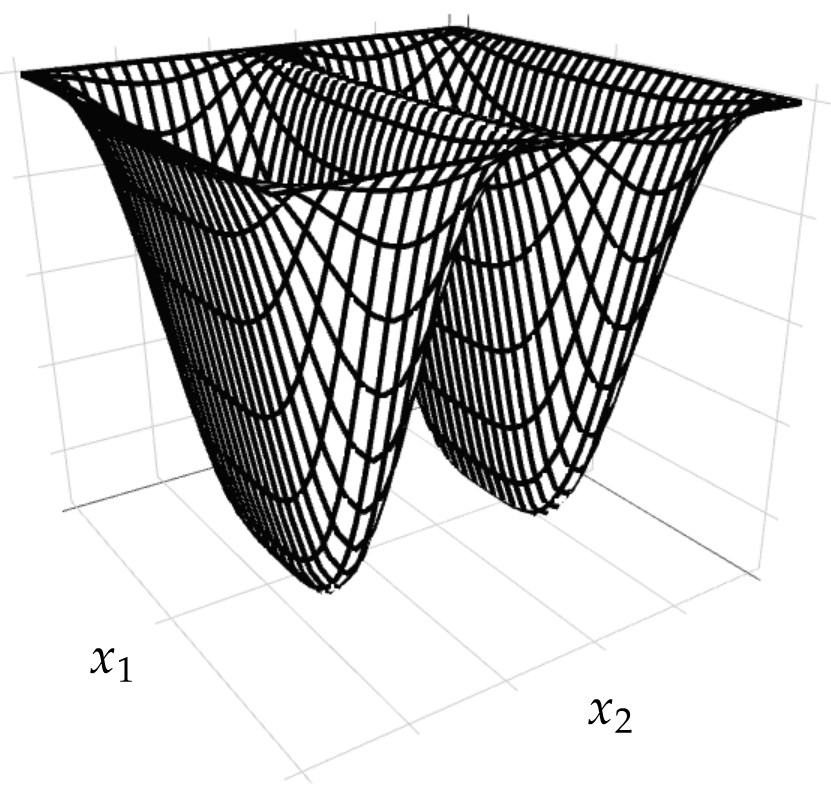}}
         & $\left(\begin{array}{cc}
                0.493 & 0.0 \\
                0.0 & 0.635
         \end{array}\right)$
         & $\left(\begin{array}{cc}
                0.004566 & 0.0 \\
                0.0 & 0.004947
         \end{array}\right)$
         & $\left(\begin{array}{c}
                0.134 \\
                0.0
         \end{array}\right)$
    \\\hline
    \end{tabular}
    \label{tab:effective_parameters_3d}
\end{table}
The nonlinearity is resolved by splitting up the problem and doing the time stepping for temperature and Stokes problem sequentially.
To be precise, we do a time step for the fluid velocity and pressure using the temperature from the previous time step, followed by a time step of the temperature problem with the velocity of the new time step. 

The simulation results at three distinct time steps are presented in \cref{fig:3d_simulations}.
The effect of the temperature-dependent viscosity can be observed in the flow profile.
At the start ($t=0.05$), the temperature is more or less constant, so there is a uniform velocity field.
The viscosity decreases locally ($t=1.85$) due to the localized heat source, and the fluid flows in a diagonal direction towards the area of lower viscosity.
As soon as the heat starts to spread through the advection, the observed effect decreases $(t=5.0)$.
We conclude, that the temperature-dependent viscosity can lead to effects that a constant viscosity could not depict, both in the microscale and effective model.
This effect appears in both models but is easier to recognize, in \cref{fig:3d_simulations}, for the homogenized case, since the mesh does not need to resolve the roughness and has a uniform structure.
Again, the effective model yields a good approximation of the resolved $\e$-model while requiring much less computational effort. 
\begin{figure}[ht]
    \centering
    \begin{subfigure}[b]{0.91\textwidth}
        \centering
        \begin{subfigure}[b]{0.25\textwidth}
             \centering
             \includegraphics[width=\textwidth]{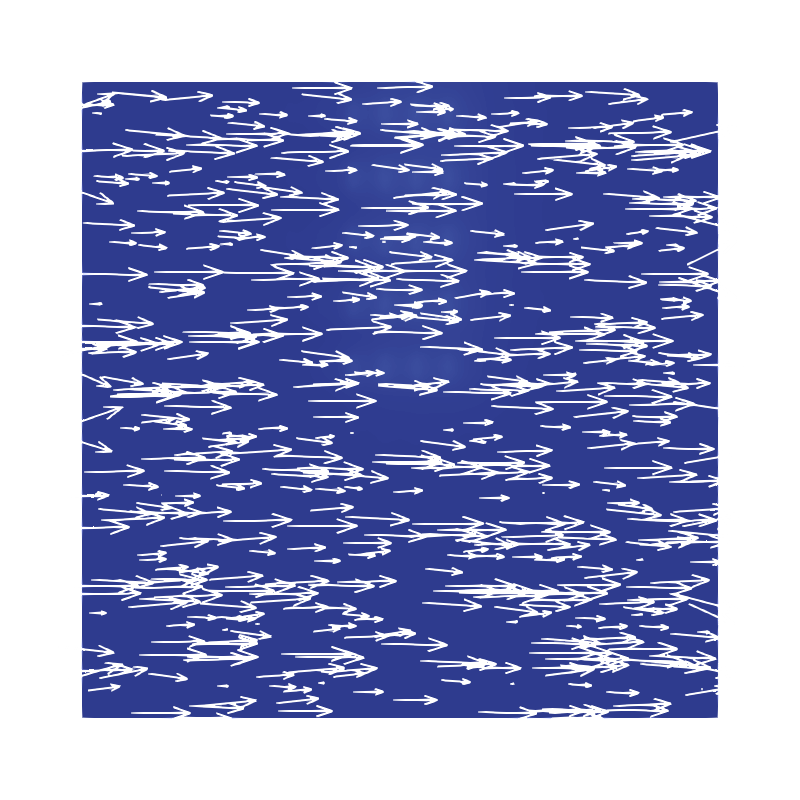}
        \end{subfigure}
        \begin{subfigure}[b]{0.25\textwidth}
             \centering
             \includegraphics[width=\textwidth]{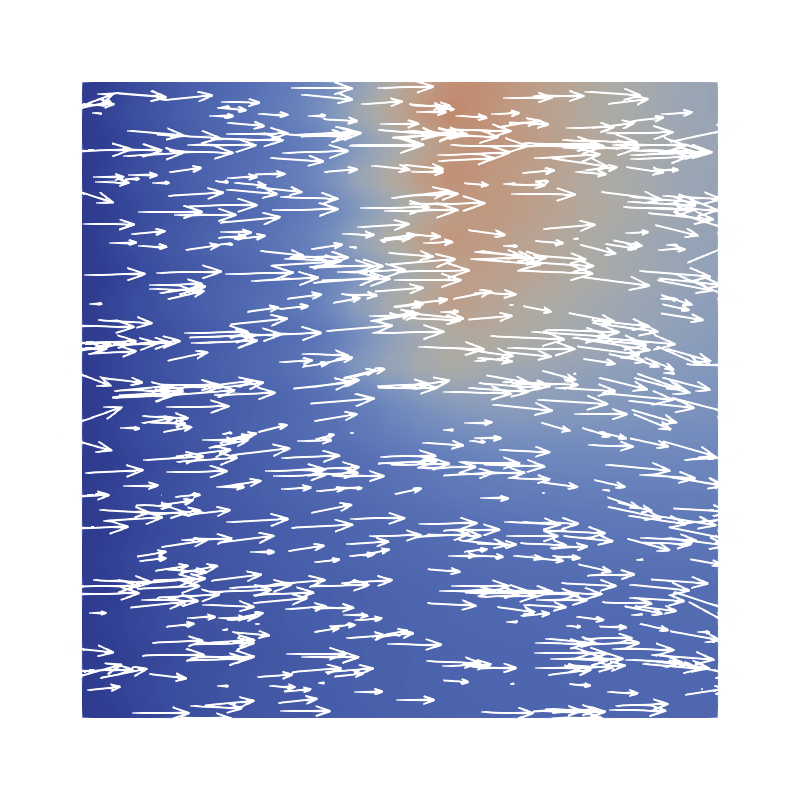}
        \end{subfigure}
        \begin{subfigure}[b]{0.25\textwidth}
             \centering
             \includegraphics[width=\textwidth]{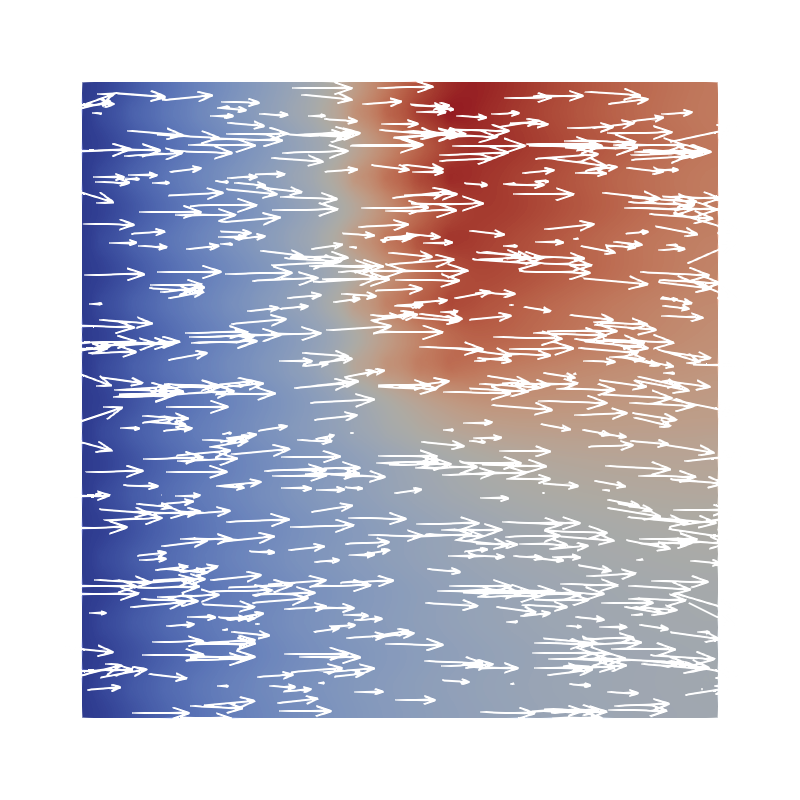}
        \end{subfigure}
        \\
        \begin{subfigure}[b]{0.25\textwidth}
             \centering
             \includegraphics[width=\textwidth]{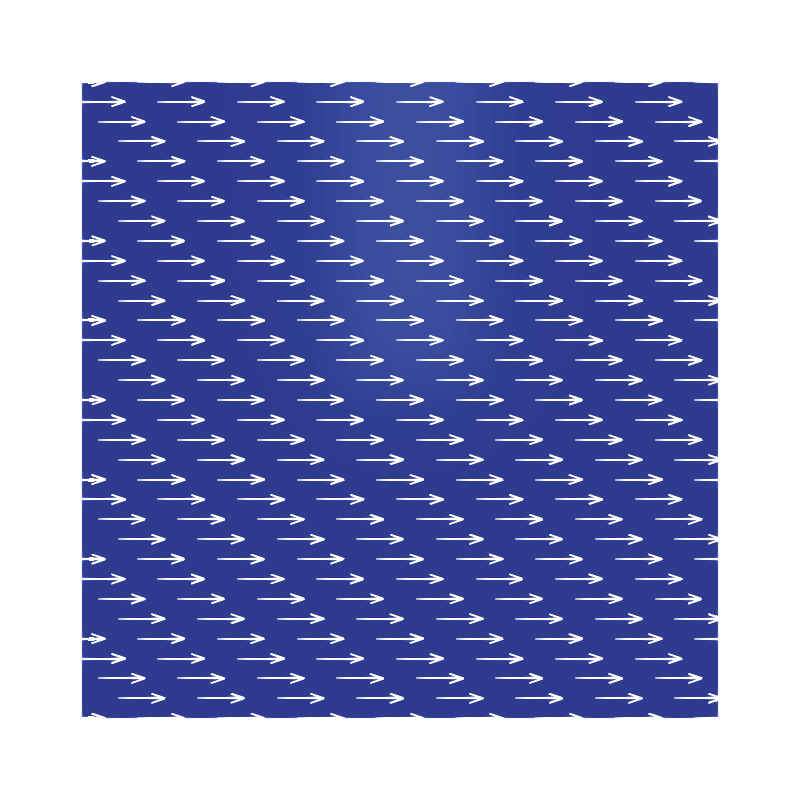}
             \caption{$t = 0.05$}
        \end{subfigure}
        \begin{subfigure}[b]{0.25\textwidth}
             \centering
             \includegraphics[width=\textwidth]{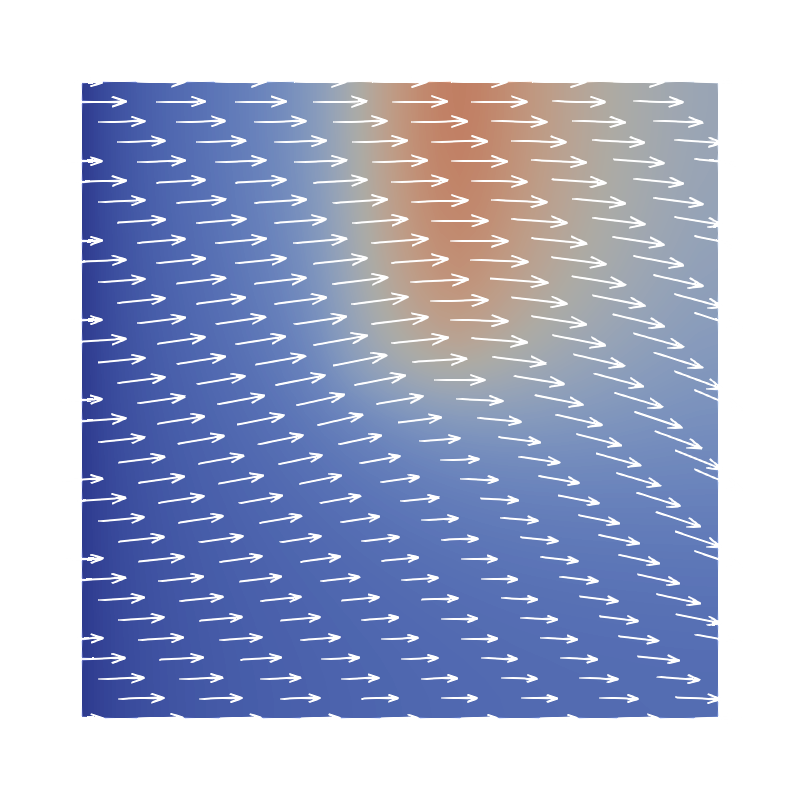}
             \caption{$t = 1.85$}
        \end{subfigure}
        \begin{subfigure}[b]{0.25\textwidth}
             \centering
             \includegraphics[width=\textwidth]{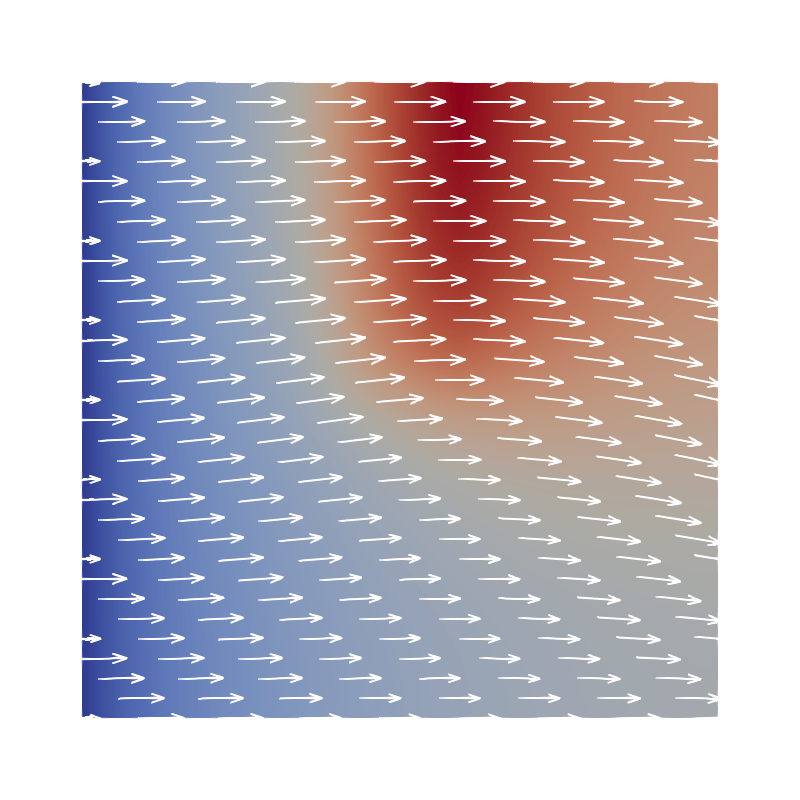}
             \caption{$t = 5.0$}
        \end{subfigure}
    \end{subfigure} 
    \hfill
    \begin{subfigure}[b]{0.07\textwidth}
        \hspace{-45pt}
        \includegraphics[width=\textwidth]{Images/simulations/EpsilonCompare/colorbar.png}
        \vspace{70pt}
    \end{subfigure}
    \caption{Temperature and velocity profile for the three-dimensional setup, at different snapshots in time. At the top, is the resolved micro model, and at the bottom, is the effective model. For the micro model, a cross-section at $x_3=0.01$ is visualized, such that the flow profile is better recognizable. Note, that the mesh for the micro model needs to resolve the roughness structure and therefore the representation of the velocity profile is rather disorganized, while in the effective case, we can use a uniform mesh.}
    \label{fig:3d_simulations}
\end{figure}
\section{Conclusion}
We investigated the effective behavior of a nonlinear Stokes-temperature system in a thin layer, where the nonlinearity arises through the convection term in the heat equation and temperature-dependent viscosity.
With the concept of two-scale convergence, an effective model was derived where the effective fluid temperature is given by an interface function. 
Multiple simulation results validate the derived model and show that temperature-dependent viscosity also plays a non-negligible role in the limit $\e \to 0$. 
The main contribution of this article is the $L^\infty$ estimate for the temperature equation in the thin rough layer and the homogenization of a highly non-linear coupled problem.

In reality, other parameters (such as conductivity or heat exchange parameter) may also be temperature dependent.
Similarly to the present approach, for example, by decoupling the system and showing the existence of solutions with a fixed-point argument, temperature-dependent $\kappa$ and $\alpha$ could be handled as well. Of course, with fitting assumptions for the parameters.
Another aspect that could be investigated further is the current $\e$-scaling of $\kappa$ in the thin layer.
Removing this scaling complicates the analysis and may lead to an effective equation without diffusion on the interface $\Sigma$, as in \cite{Almqvist2002, GU2004}.

Regarding the motivating application of a grinding process, we could derive a justification for the models used in the literature \cite{GU2004}, as explained already in the introduction.
Currently, we are looking at a strongly idealized scenario of periodic roughness and non-touching roughness with the bottom boundary.
In future research, we plan to extend this model to real grinding wheels and workpiece surfaces. Another interesting direction is studying the impact of direct contact between the workpiece and the wheel on the fluid behavior in the contact zone.
Additionally, a multiscale approach, also including microscopic material removal, is currently being investigated, studying the effect of microstructures on the whole process.   

\section*{Acknowledgements}
We thank the anonymous reviewers for their constructive and detailed feedback, which helped to improve the article. 
TF acknowledges funding by the Deutsche Forschungsgemeinschaft (DFG, German Research Foundation) -- project nr.  
281474342/GRK2224/2. Additionally, his research was also partially funded by the DFG -- project nr. 439916647.

The research activity of ME is funded by the Horizon 2022 research and innovation program of the European Union under the Marie Skłodowska-Curie fellowship project {\em{MATT}} (project nr. 101061956, \url{https://doi.org/10.3030/101061956}).

\section*{Conflict of interest}
The authors declare that there is no conflict of interest.

\section*{Data Availability Statement}
The research code associated with this article is available in the public repository \cite{Code}.

\bibliography{reference}

\begin{thebibliography}{10}
\expandafter\ifx\csname url\endcsname\relax
  \def\url#1{\burl{#1}}\fi
\expandafter\ifx\csname urlprefix\endcsname\relax\def\urlprefix{URL }\fi
\providecommand{\bibinfo}[2]{#2}
\providecommand{\eprint}[2][]{\url{#2}}
\providecommand{\doi}[1]{\url{https://doi.org/#1}}
\bibcommenthead

\bibitem{klocke2009}
\bibinfo{author}{Klocke, F.} \& \bibinfo{author}{Kuchle, A.}
\newblock \emph{\bibinfo{title}{Manufacturing Processes 2: Grinding, Honing, Lapping}} RWTHedition (\bibinfo{publisher}{Springer Berlin Heidelberg}, \bibinfo{year}{2009}).
\newblock \urlprefix\url{https://books.google.de/books?id=rHiNm-OlSN8C}.

\bibitem{TONSHOFF1992}
\bibinfo{author}{Tönshoff, H.}, \bibinfo{author}{Peters, J.}, \bibinfo{author}{Inasaki, I.} \& \bibinfo{author}{Paul, T.}
\newblock \bibinfo{title}{Modelling and simulation of grinding processes}.
\newblock \emph{\bibinfo{journal}{CIRP Annals}} \textbf{\bibinfo{volume}{41}}, \bibinfo{pages}{677--688} (\bibinfo{year}{1992}).
\newblock \urlprefix\url{https://www.sciencedirect.com/science/article/pii/S0007850607632545}.

\bibitem{Wiederkehr2023}
\bibinfo{author}{Wiederkehr, P.}, \bibinfo{author}{Grimmert, A.}, \bibinfo{author}{Heining, I.}, \bibinfo{author}{Siebrecht, T.} \& \bibinfo{author}{Wöste, F.}
\newblock \bibinfo{title}{Potentials of grinding process simulations for the analysis of individual grain engagement and complete grinding processes}.
\newblock \emph{\bibinfo{journal}{Frontiers in Manufacturing Technology}} \textbf{\bibinfo{volume}{2}} (\bibinfo{year}{2023}).
\newblock \urlprefix\url{https://www.frontiersin.org/articles/10.3389/fmtec.2022.1102140}.

\bibitem{GU2004}
\bibinfo{author}{Gu, R.}, \bibinfo{author}{Shillor, M.}, \bibinfo{author}{Barber, G.} \& \bibinfo{author}{Jen, T.}
\newblock \bibinfo{title}{Thermal analysis of the grinding process}.
\newblock \emph{\bibinfo{journal}{Mathematical and Computer Modelling}} \textbf{\bibinfo{volume}{39}}, \bibinfo{pages}{991--1003} (\bibinfo{year}{2004}).
\newblock \urlprefix\url{https://www.sciencedirect.com/science/article/pii/S0895717704905304}.

\bibitem{Garca-Coln1989}
\bibinfo{author}{Garca-Coln, L.~S.}, \bibinfo{author}{{del Castillo LF}} \& \bibinfo{author}{Goldstein, P.}
\newblock \bibinfo{title}{Theoretical basis for the {Vogel-Fulcher-Tammann} equation}.
\newblock \emph{\bibinfo{journal}{Phys. Rev. B Condens. Matter}} \textbf{\bibinfo{volume}{40}}, \bibinfo{pages}{7040--7044} (\bibinfo{year}{1989}).

\bibitem{Perez_2008}
\bibinfo{author}{Pérez, C.}, \bibinfo{author}{Thomas, J.-M.}, \bibinfo{author}{Blancher, S.} \& \bibinfo{author}{Creff, R.}
\newblock \bibinfo{title}{The steady {N}avier–{S}tokes/energy system with temperature‐dependent viscosity—part 1: {A}nalysis of the continuous problem}.
\newblock \emph{\bibinfo{journal}{International Journal for Numerical Methods in Fluids}} \textbf{\bibinfo{volume}{56}}, \bibinfo{pages}{63 -- 89} (\bibinfo{year}{2008}).

\bibitem{Paiva2021}
\bibinfo{author}{de~Paiva, R.~L.}, \bibinfo{author}{de~Souza~Ruzzi, R.} \& \bibinfo{author}{da~Silva, R.~B.}
\newblock \bibinfo{title}{An approach to reduce thermal damages on grinding of bearing steel by controlling cutting fluid temperature}.
\newblock \emph{\bibinfo{journal}{Metals}} \textbf{\bibinfo{volume}{11}} (\bibinfo{year}{2021}).
\newblock \urlprefix\url{https://www.mdpi.com/2075-4701/11/10/1660}.

\bibitem{Wiesener2023}
\bibinfo{author}{Wiesener, F.} \emph{et~al.}
\newblock \bibinfo{title}{Modeling of heat transfer in tool grinding for multiscale simulations}.
\newblock \emph{\bibinfo{journal}{Procedia CIRP}} \textbf{\bibinfo{volume}{117}}, \bibinfo{pages}{269–274} (\bibinfo{year}{2023}).
\newblock \urlprefix\url{http://dx.doi.org/10.1016/j.procir.2023.03.046}.

\bibitem{Thunich2023}
\bibinfo{author}{Thunich, P.} \emph{et~al.}
\newblock \bibinfo{title}{On the validity of the reynolds equation for the simulation of wet grinding processes}.
\newblock \emph{\bibinfo{journal}{PAMM}} \textbf{\bibinfo{volume}{23}}, \bibinfo{pages}{e202300109} (\bibinfo{year}{2023}).
\newblock \urlprefix\url{https://onlinelibrary.wiley.com/doi/abs/10.1002/pamm.202300109}.

\bibitem{VESALI2014}
\bibinfo{author}{Vesali, A.} \& \bibinfo{author}{Tawakoli, T.}
\newblock \bibinfo{title}{Study on hydrodynamic pressure in grinding contact zone considering grinding parameters and grinding wheel specifications}.
\newblock \emph{\bibinfo{journal}{Procedia CIRP}} \textbf{\bibinfo{volume}{14}}, \bibinfo{pages}{13--18} (\bibinfo{year}{2014}).
\newblock \urlprefix\url{https://www.sciencedirect.com/science/article/pii/S2212827114001966}.
\newblock \bibinfo{note}{6th CIRP International Conference on High Performance Cutting, HPC2014}.

\bibitem{BAIR2001}
\bibinfo{author}{Bair, S.}, \bibinfo{author}{Jarzynski, J.} \& \bibinfo{author}{Winer, W.~O.}
\newblock \bibinfo{title}{The temperature, pressure and time dependence of lubricant viscosity}.
\newblock \emph{\bibinfo{journal}{Tribology International}} \textbf{\bibinfo{volume}{34}}, \bibinfo{pages}{461--468} (\bibinfo{year}{2001}).
\newblock \urlprefix\url{https://www.sciencedirect.com/science/article/pii/S0301679X01000421}.

\bibitem{HABCHI2010}
\bibinfo{author}{Habchi, W.} \emph{et~al.}
\newblock \bibinfo{title}{Influence of pressure and temperature dependence of thermal properties of a lubricant on the behaviour of circular tehd contacts}.
\newblock \emph{\bibinfo{journal}{Tribology International}} \textbf{\bibinfo{volume}{43}}, \bibinfo{pages}{1842--1850} (\bibinfo{year}{2010}).
\newblock \urlprefix\url{https://www.sciencedirect.com/science/article/pii/S0301679X09003028}.
\newblock \bibinfo{note}{36th Leeds–Lyon Symposium Special Issue: Multi-facets of Tribology}.

\bibitem{KUMAR2020}
\bibinfo{author}{Kumar, K.}, \bibinfo{author}{List, F.}, \bibinfo{author}{Pop, I.~S.} \& \bibinfo{author}{Radu, F.~A.}
\newblock \bibinfo{title}{Formal upscaling and numerical validation of unsaturated flow models in fractured porous media}.
\newblock \emph{\bibinfo{journal}{Journal of Computational Physics}} \textbf{\bibinfo{volume}{407}}, \bibinfo{pages}{109138} (\bibinfo{year}{2020}).
\newblock \urlprefix\url{https://www.sciencedirect.com/science/article/pii/S0021999119308435}.

\bibitem{KRZACZEK2023}
\bibinfo{author}{Krzaczek, M.} \& \bibinfo{author}{Tejchman, J.}
\newblock \bibinfo{title}{Hydraulic fracturing process in rocks – small-scale simulations with a novel fully coupled dem/cfd-based thermo-hydro-mechanical approach}.
\newblock \emph{\bibinfo{journal}{Engineering Fracture Mechanics}} \textbf{\bibinfo{volume}{289}}, \bibinfo{pages}{109424} (\bibinfo{year}{2023}).
\newblock \urlprefix\url{https://www.sciencedirect.com/science/article/pii/S001379442300382X}.

\bibitem{Bassion2008}
\bibinfo{author}{Basson, A.} \& \bibinfo{author}{Gérard-Varet, D.}
\newblock \bibinfo{title}{Wall laws for fluid flows at a boundary with random roughness}.
\newblock \emph{\bibinfo{journal}{Communications on Pure and Applied Mathematics}} \textbf{\bibinfo{volume}{61}}, \bibinfo{pages}{941--987} (\bibinfo{year}{2008}).
\newblock \urlprefix\url{https://onlinelibrary.wiley.com/doi/abs/10.1002/cpa.20237}.

\bibitem{CHECHKIN1999}
\bibinfo{author}{Chechkin, G.~A.}, \bibinfo{author}{Friedman, A.} \& \bibinfo{author}{Piatnitski, A.~L.}
\newblock \bibinfo{title}{The boundary-value problem in domains with very rapidly oscillating boundary}.
\newblock \emph{\bibinfo{journal}{Journal of Mathematical Analysis and Applications}} \textbf{\bibinfo{volume}{231}}, \bibinfo{pages}{213--234} (\bibinfo{year}{1999}).
\newblock \urlprefix\url{https://www.sciencedirect.com/science/article/pii/S0022247X98962266}.

\bibitem{Mikeli2009}
\bibinfo{author}{Mikeli{\'c}, A.}
\newblock \bibinfo{title}{Rough boundaries and wall laws}.
\newblock \emph{\bibinfo{journal}{In:{Q}ualitative {P}roperties of {S}olutions to {P}artial {D}ifferential {E}quations, {J}indich {N}ecas {C}enter for {M}athematical {M}odeling {L}ecture {N}otes}} \textbf{\bibinfo{volume}{5}}, \bibinfo{pages}{103–134} (\bibinfo{year}{2009}).

\bibitem{Donato_2019}
\bibinfo{author}{Donato, P.}, \bibinfo{author}{Jose, E.} \& \bibinfo{author}{Onofrei, D.}
\newblock \bibinfo{title}{Asymptotic analysis of a multiscale parabolic problem with a rough fast oscillating interface}.
\newblock \emph{\bibinfo{journal}{Archive of Applied Mechanics}} \textbf{\bibinfo{volume}{89}}, \bibinfo{pages}{1--29} (\bibinfo{year}{2019}).
\newblock \urlprefix\url{https://www.doi.org/10.1007/s00419-018-1415-5}.

\bibitem{Nevard1997}
\bibinfo{author}{Nevard, J.} \& \bibinfo{author}{Keller, J.~B.}
\newblock \bibinfo{title}{Homogenization of rough boundaries and interfaces}.
\newblock \emph{\bibinfo{journal}{SIAM Journal on Applied Mathematics}} \textbf{\bibinfo{volume}{57}}, \bibinfo{pages}{1660--1686} (\bibinfo{year}{1997}).
\newblock \urlprefix\url{https://doi.org/10.1137/S0036139995291088}.

\bibitem{AHMED2017}
\bibinfo{author}{Ahmed, E.}, \bibinfo{author}{Jaffré, J.} \& \bibinfo{author}{Roberts, J.~E.}
\newblock \bibinfo{title}{A reduced fracture model for two-phase flow with different rock types}.
\newblock \emph{\bibinfo{journal}{Mathematics and Computers in Simulation}} \textbf{\bibinfo{volume}{137}}, \bibinfo{pages}{49--70} (\bibinfo{year}{2017}).
\newblock \urlprefix\url{https://www.sciencedirect.com/science/article/pii/S0378475416301987}.
\newblock \bibinfo{note}{MAMERN VI-2015: 6th International Conference on Approximation Methods and Numerical Modeling in Environment and Natural Resources}.

\bibitem{Pop2016}
\bibinfo{author}{Pop, I.}, \bibinfo{author}{Bogers, J.} \& \bibinfo{author}{Kumar, K.}
\newblock \bibinfo{title}{Analysis and upscaling of a reactive transport model in fractured porous media with nonlinear transmission condition}.
\newblock \emph{\bibinfo{journal}{Vietnam Journal of Mathematics}} \textbf{\bibinfo{volume}{45}} (\bibinfo{year}{2016}).

\bibitem{Gahn17}
\bibinfo{author}{Gahn, M.}, \bibinfo{author}{Neuss-Radu, M.} \& \bibinfo{author}{Knabner, P.}
\newblock \bibinfo{title}{Derivation of effective transmission conditions for domains separated by a membrane for different scaling of membrane diffusivity}.
\newblock \emph{\bibinfo{journal}{Discrete and Continuous Dynamical Systems - Series S}} \textbf{\bibinfo{volume}{10}}, \bibinfo{pages}{773--797} (\bibinfo{year}{2017}).
\newblock \urlprefix\url{https://www.doi.org/10.3934/dcdss.2017039}.

\bibitem{Neuss07}
\bibinfo{author}{Neuss-Radu, M.} \& \bibinfo{author}{J\"{a}ger, W.}
\newblock \bibinfo{title}{Effective transmission conditions for reaction-diffusion processes in domains separated by an interface}.
\newblock \emph{\bibinfo{journal}{SIAM Journal on Mathematical Analysis}} \textbf{\bibinfo{volume}{39}}, \bibinfo{pages}{687--720} (\bibinfo{year}{2007}).
\newblock \urlprefix\url{https://doi.org/10.1137/060665452}.

\bibitem{Bayada_1986}
\bibinfo{author}{Bayada, G.} \& \bibinfo{author}{Chambat, M.}
\newblock \bibinfo{title}{The transition between the {S}tokes equations and the {R}eynolds equation: A mathematical proof}.
\newblock \emph{\bibinfo{journal}{Applied Mathematics and Optimization}} \textbf{\bibinfo{volume}{14}}, \bibinfo{pages}{73--93} (\bibinfo{year}{1986}).
\newblock \urlprefix\url{https://doi.org/10.1007/BF01442229}.

\bibitem{Bayda_1989}
\bibinfo{author}{Bayada, G.} \& \bibinfo{author}{Chambat, M.}
\newblock \bibinfo{title}{Homogenization of the {S}tokes system in a thin film flow with rapidly varying thickness}.
\newblock \emph{\bibinfo{journal}{RAIRO. Modélisation Mathématique et Analyse Numérique}} \textbf{\bibinfo{volume}{23}} (\bibinfo{year}{1989}).

\bibitem{Reynolds1886}
\bibinfo{author}{Reynolds, O.}
\newblock \bibinfo{title}{On the theory of lubrication and its application to mr. beauchamp tower's experiments, including an experimental determination of the viscosity of olive oil}.
\newblock \emph{\bibinfo{journal}{Philosophical Transactions of the Royal Society of London}} \textbf{\bibinfo{volume}{177}}, \bibinfo{pages}{157--234} (\bibinfo{year}{1886}).
\newblock \urlprefix\url{http://www.jstor.org/stable/109480}.

\bibitem{ALMQVIST2007}
\bibinfo{author}{Almqvist, A.}, \bibinfo{author}{Essel, E.}, \bibinfo{author}{Persson, L.-E.} \& \bibinfo{author}{Wall, P.}
\newblock \bibinfo{title}{Homogenization of the unstationary incompressible reynolds equation}.
\newblock \emph{\bibinfo{journal}{Tribology International}} \textbf{\bibinfo{volume}{40}}, \bibinfo{pages}{1344--1350} (\bibinfo{year}{2007}).
\newblock \urlprefix\url{https://www.sciencedirect.com/science/article/pii/S0301679X0700062X}.

\bibitem{BENHABOUCHA_2024}
\bibinfo{author}{Benhaboucha, N.}, \bibinfo{author}{Chambat, M.} \& \bibinfo{author}{Ciuperca, I.}
\newblock \bibinfo{title}{Asymptotic behaviour of pressure and stresses in a thin film flow with a rough boundary}.
\newblock \emph{\bibinfo{journal}{Quarterly of Applied Mathematics}} \textbf{\bibinfo{volume}{63}}, \bibinfo{pages}{369--400} (\bibinfo{year}{2005}).
\newblock \urlprefix\url{http://www.jstor.org/stable/43638671}.

\bibitem{Chupin2012}
\bibinfo{author}{Chupin, L.} \& \bibinfo{author}{Martin, S.}
\newblock \bibinfo{title}{Rigorous derivation of the thin film approximation with roughness-induced correctors}.
\newblock \emph{\bibinfo{journal}{SIAM Journal on Mathematical Analysis}} \textbf{\bibinfo{volume}{44}}, \bibinfo{pages}{3041--3070} (\bibinfo{year}{2012}).
\newblock \urlprefix\url{https://doi.org/10.1137/110824371}.

\bibitem{Fabricius2017}
\bibinfo{author}{Fabricius, J.}, \bibinfo{author}{Tsandzana, A.}, \bibinfo{author}{Perez-Rafols, F.} \& \bibinfo{author}{Wall, P.}
\newblock \bibinfo{title}{{A Comparison of the Roughness Regimes in Hydrodynamic Lubrication}}.
\newblock \emph{\bibinfo{journal}{Journal of Tribology}} \textbf{\bibinfo{volume}{139}}, \bibinfo{pages}{051702} (\bibinfo{year}{2017}).
\newblock \urlprefix\url{https://doi.org/10.1115/1.4035868}.

\bibitem{LUKKASSEN_2011}
\bibinfo{author}{Lukkassen, D.}, \bibinfo{author}{Meidell, A.} \& \bibinfo{author}{Wall, P.}
\newblock \bibinfo{title}{Analysis of the effects of rough surfaces in compressible thin film flow by homogenization}.
\newblock \emph{\bibinfo{journal}{International Journal of Engineering Science}} \textbf{\bibinfo{volume}{49}}, \bibinfo{pages}{369--377} (\bibinfo{year}{2011}).
\newblock \urlprefix\url{https://www.sciencedirect.com/science/article/pii/S0020722511000115}.

\bibitem{Marusic2000}
\bibinfo{author}{Marusic, S.} \& \bibinfo{author}{Marusic-Paloka, E.}
\newblock \bibinfo{title}{Two-scale convergence for thin domains and its applications to some lower-dimensional models in fluid mechanics}.
\newblock \emph{\bibinfo{journal}{Asymptotic Analysis}} \textbf{\bibinfo{volume}{23}}, \bibinfo{pages}{23--57} (\bibinfo{year}{2000}).

\bibitem{Almqvist2002}
\bibinfo{author}{Almqvist, A.}, \bibinfo{author}{Burtseva, E.}, \bibinfo{author}{Rajagopal, K.} \& \bibinfo{author}{Wall, P.}
\newblock \bibinfo{title}{On lower-dimensional models of thin film flow, part c: Derivation of a {R}eynolds type of equation for fluids with temperature and pressure dependent viscosity}.
\newblock \emph{\bibinfo{journal}{Proceedings of the Institution of Mechanical Engineers, Part J: Journal of Engineering Tribology}} \textbf{\bibinfo{volume}{237}}, \bibinfo{pages}{135065012211352} (\bibinfo{year}{2022}).

\bibitem{Fabricius2022}
\bibinfo{author}{Fabricius, J.}, \bibinfo{author}{Manjate, S.} \& \bibinfo{author}{and, P.~W.}
\newblock \bibinfo{title}{On pressure-driven hele–shaw flow of power-law fluids}.
\newblock \emph{\bibinfo{journal}{Applicable Analysis}} \textbf{\bibinfo{volume}{101}}, \bibinfo{pages}{5107--5137} (\bibinfo{year}{2022}).
\newblock \urlprefix\url{https://doi.org/10.1080/00036811.2021.1880570}.

\bibitem{Fabricius2023}
\bibinfo{author}{Fabricius, J.} \& \bibinfo{author}{Gahn, M.}
\newblock \bibinfo{title}{Homogenization and dimension reduction of the {S}tokes problem with {N}avier-slip condition in thin perforated layers}.
\newblock \emph{\bibinfo{journal}{Multiscale Modeling \& Simulation}} \textbf{\bibinfo{volume}{21}}, \bibinfo{pages}{1502--1533} (\bibinfo{year}{2023}).
\newblock \urlprefix\url{https://doi.org/10.1137/22M1528860}.

\bibitem{Gahn2024}
\bibinfo{author}{Gahn, M.} \& \bibinfo{author}{Neuss-Radu, M.}
\newblock \bibinfo{title}{Effective interface laws for fluid flow and solute transport through thin reactive porous layers}.
\newblock \emph{\bibinfo{journal}{Journal of Evolution Equations}} \textbf{\bibinfo{volume}{25}}, \bibinfo{pages}{33} (\bibinfo{year}{2025}).
\newblock \urlprefix\url{https://doi.org/10.1007/s00028-025-01061-1}.

\bibitem{Ladyvzenskaja1968}
\bibinfo{author}{Lady{\v{z}}enskaja, O.}, \bibinfo{author}{Solonniov, V.} \& \bibinfo{author}{Ural'ceva, N.}
\newblock \bibinfo{title}{Linear and quasilinear equations of parabolic type (translated from the russian by {S}. {S}mith)}.
\newblock \emph{\bibinfo{journal}{American Mathematical Society}}  (\bibinfo{year}{1988}).

\bibitem{Bhattacharya2022}
\bibinfo{author}{Bhattacharya, A.}, \bibinfo{author}{Gahn, M.} \& \bibinfo{author}{Neuss-Radu, M.}
\newblock \bibinfo{title}{Effective transmission conditions for reaction–diffusion processes in domains separated by thin channels}.
\newblock \emph{\bibinfo{journal}{Applicable Analysis}} \textbf{\bibinfo{volume}{101}}, \bibinfo{pages}{1896--1910} (\bibinfo{year}{2022}).
\newblock \urlprefix\url{https://doi.org/10.1080/00036811.2020.1789599}.

\bibitem{Code}
\bibinfo{author}{Freudenberg, T.}
\newblock \bibinfo{title}{Github repository for “{A}nalysis and {S}imulation of a {C}oupled {F}luid-{H}eat {S}ystem in a {T}hin, {R}ough {L}ayer”}.
\newblock \bibinfo{howpublished}{\url{https://github.com/TomF98/Homogenization-thin-layer-with-temperature-dependent-viscosity}}.
\newblock \urlprefix\url{https://github.com/TomF98/Homogenization-thin-layer-with-temperature-dependent-viscosity}.
\newblock \bibinfo{note}{July 9, 2024}.

\bibitem{Allaire1989}
\bibinfo{author}{Allaire, G.}
\newblock \bibinfo{title}{Homogenization of the {S}tokes flow in a connected porous medium}.
\newblock \emph{\bibinfo{journal}{Asymptotic Analysis}} \textbf{\bibinfo{volume}{2}}, \bibinfo{pages}{203--222} (\bibinfo{year}{1989}).
\newblock \urlprefix\url{https://doi.org/10.3233/ASY-1989-2302}.
\newblock \bibinfo{note}{3}.

\bibitem{Eden2024}
\bibinfo{author}{Eden, M.} \& \bibinfo{author}{Freudenberg, T.}
\newblock \bibinfo{title}{Effective heat transfer between a porous medium and a fluid layer: Homogenization and simulation}.
\newblock \emph{\bibinfo{journal}{Multiscale Modeling \& Simulation}} \textbf{\bibinfo{volume}{22}}, \bibinfo{pages}{752--783} (\bibinfo{year}{2024}).
\newblock \urlprefix\url{https://doi.org/10.1137/22M1541794}.

\bibitem{Girault_1981}
\bibinfo{author}{Girault, V.} \& \bibinfo{author}{Raviart, P.-A.}
\newblock \emph{\bibinfo{title}{Finite element approximation of the {N}avier-{S}tokes equations}}  (\bibinfo{publisher}{Springer}, \bibinfo{year}{1981}).

\bibitem{ACERBI_1992}
\bibinfo{author}{Acerbi, E.}, \bibinfo{author}{{Chiadò Piat}, V.}, \bibinfo{author}{{Dal Maso}, G.} \& \bibinfo{author}{Percivale, D.}
\newblock \bibinfo{title}{An extension theorem from connected sets, and homogenization in general periodic domains}.
\newblock \emph{\bibinfo{journal}{Nonlinear Analysis: Theory, Methods \& Applications}} \textbf{\bibinfo{volume}{18}}, \bibinfo{pages}{481--496} (\bibinfo{year}{1992}).
\newblock \urlprefix\url{https://www.sciencedirect.com/science/article/pii/0362546X92900157}.

\bibitem{Gahn_2021}
\bibinfo{author}{Gahn, M.}, \bibinfo{author}{Neuss-Radu, M.} \& \bibinfo{author}{Jäger, W.}
\newblock \bibinfo{title}{Two-scale tools for homogenization and dimension reduction of perforated thin layers: {E}xtensions, {K}orn-inequalities, and two-scale compactness of scale-dependent sets in {S}obolev spaces} (\bibinfo{year}{2021}).
\newblock \urlprefix\url{arxiv.org/abs/2112.00559}.
\newblock \bibinfo{note}{Preprint, arxiv.org/abs/2112.00559}.

\bibitem{Gahn2016}
\bibinfo{author}{Gahn, M.}, \bibinfo{author}{Neuss-Radu, M.} \& \bibinfo{author}{Knabner, P.}
\newblock \bibinfo{title}{Homogenization of reaction--diffusion processes in a two-component porous medium with nonlinear flux conditions at the interface}.
\newblock \emph{\bibinfo{journal}{SIAM Journal on Applied Mathematics}} \textbf{\bibinfo{volume}{76}}, \bibinfo{pages}{1819--1843} (\bibinfo{year}{2016}).
\newblock \urlprefix\url{https://doi.org/10.1137/15M1018484}.

\bibitem{Showalter1997}
\bibinfo{author}{Showalter, R.}
\newblock \emph{\bibinfo{title}{Montone Operators in Banach Space and Nonlinear Partial Differential Equations}} Vol.~\bibinfo{volume}{49} (\bibinfo{publisher}{Mathematical Surveys and Monographs}, \bibinfo{year}{1997}).

\bibitem{zeidler1999applied}
\bibinfo{author}{Zeidler, E.}
\newblock \emph{\bibinfo{title}{Applied Functional Analysis: Applications to Mathematical Physics}} Applied Mathematical Sciences (\bibinfo{publisher}{Springer New York}, \bibinfo{year}{1999}).
\newblock \urlprefix\url{https://books.google.de/books?id=bFqRaovfY4MC}.

\bibitem{Freudenberg2024}
\bibinfo{author}{Freudenberg, T.} \& \bibinfo{author}{Eden, M.}
\newblock \bibinfo{title}{Homogenization and simulation of heat transfer through a thin grain layer}.
\newblock \emph{\bibinfo{journal}{Networks and Heterogeneous Media}} \textbf{\bibinfo{volume}{19}}, \bibinfo{pages}{569--596} (\bibinfo{year}{2024}).
\newblock \urlprefix\url{https://www.aimspress.com/article/doi/10.3934/nhm.2024025}.

\bibitem{Gahn2022Stokes}
\bibinfo{author}{Gahn, M.}, \bibinfo{author}{Jäger, W.} \& \bibinfo{author}{Neuss-Radu, M.}
\newblock \bibinfo{title}{Derivation of stokes-plate-equations modeling fluid flow interaction with thin porous elastic layers}.
\newblock \emph{\bibinfo{journal}{Applicable Analysis}} \textbf{\bibinfo{volume}{101}}, \bibinfo{pages}{4319--4348} (\bibinfo{year}{2022}).
\newblock \urlprefix\url{https://doi.org/10.1080/00036811.2022.2080673}.

\bibitem{Allaire92}
\bibinfo{author}{Allaire, G.}
\newblock \bibinfo{title}{Homogenization and two scale convergence}.
\newblock \emph{\bibinfo{journal}{{SIAM} J. Math. Anal.}} \textbf{\bibinfo{volume}{23}}, \bibinfo{pages}{1482--1518} (\bibinfo{year}{1992}).
\newblock \urlprefix\url{https://www.doi.org/10.1137/0523084}.

\bibitem{Alouges2016}
\bibinfo{author}{Alouges, F.}
\newblock \bibinfo{title}{Introduction to periodic homogenization}.
\newblock \emph{\bibinfo{journal}{Interdisciplinary Information Sciences}} \textbf{\bibinfo{volume}{22}}, \bibinfo{pages}{147--186} (\bibinfo{year}{2016}).

\bibitem{Jager1998}
\bibinfo{author}{Jäger, W.} \& \bibinfo{author}{Mikelić, A.}
\newblock \bibinfo{title}{On the effective equations of a viscous incompressible fluid flow through a filter of finite thickness}.
\newblock \emph{\bibinfo{journal}{Communications on Pure and Applied Mathematics}} \textbf{\bibinfo{volume}{51}}, \bibinfo{pages}{1073--1121} (\bibinfo{year}{1998}).
\newblock \urlprefix\url{https://onlinelibrary.wiley.com/doi/abs/10.1002/%28SICI%291097-0312%28199809/10%2951%3A9/10%3C1073%3A%3AAID-CPA6%3E3.0.CO%3B2-A}.

\bibitem{Fenics}
\bibinfo{author}{Alnaes, M.~S.} \emph{et~al.}
\newblock \bibinfo{title}{The {FEniCS} project version 1.5}.
\newblock \emph{\bibinfo{journal}{Arch. Num. Soft.}} \textbf{\bibinfo{volume}{3}} (\bibinfo{year}{2015}).
\newblock \urlprefix\url{https://www.doi.org/10.11588/ans.2015.100.20553}.

\bibitem{gmsh}
\bibinfo{author}{Geuzaine, C.} \& \bibinfo{author}{Remacle, J.-F.}
\newblock \bibinfo{title}{Gmsh: A 3-d finite element mesh generator with built-in pre- and post-processing facilities}.
\newblock \emph{\bibinfo{journal}{Int. J. Numer. Methods. Eng.}} \textbf{\bibinfo{volume}{79}}, \bibinfo{pages}{1309 -- 1331} (\bibinfo{year}{2009}).
\newblock \urlprefix\url{https://www.doi.org/10.1002/nme.2579}.

\bibitem{BrooksS82}
\bibinfo{author}{Brooks, A.~N.} \& \bibinfo{author}{Hughes, T.~J.}
\newblock \bibinfo{title}{Streamline upwind/petrov-galerkin formulations for convection dominated flows with particular emphasis on the incompressible {N}avier-{S}tokes equations}.
\newblock \emph{\bibinfo{journal}{Computer Methods in Applied Mechanics and Engineering}} \textbf{\bibinfo{volume}{32}}, \bibinfo{pages}{199--259} (\bibinfo{year}{1982}).
\newblock \urlprefix\url{https://www.doi.org/10.1016/0045-7825(82)90071-8}.

\end{thebibliography}

\end{document}